\numberwithin{equation}{section}
\newtheorem{proposition}{Proposition}[section]
\newtheorem{corollary}{Corollary}[section]
\newtheorem{lemma}{Lemma}[section]
\newtheorem{assumption}{Assumption}
\newtheorem{theorem}{Theorem}[section]
\theoremstyle{remark}
\newtheorem{remark}{Remark}[section]
\theoremstyle{remark}
\title{Markov Chain Approximation of One-Dimensional Sticky Diffusions}
\author{Christian Meier\thanks{Department of Systems Engineering and Engineering Management, The Chinese University of Hong Kong, Hong Kong SAR. Email: meier@se.cuhk.edu.hk.}\and Lingfei Li\thanks{Corresponding author. Department of Systems Engineering and Engineering Management, The Chinese University of Hong Kong, Hong Kong SAR. Email: lfli@se.cuhk.edu.hk.} \and Gongqiu Zhang\thanks{School of Science and Engineering, The Chinese University of Hong Kong (Shenzhen), China. Email: zhanggongqiu@cuhk.edu.cn.}}
\begin{document}
	
\maketitle
\begin{abstract}
We develop continuous time Markov chain (CTMC) approximation of one-dimensional diffusions with a lower sticky boundary. Approximate solutions to the action of the Feynman-Kac operator associated with a sticky diffusion and first passage probabilities are obtained using matrix exponentials. We show how to compute matrix exponentials efficiently and prove that a carefully designed scheme achieves second order convergence. We also propose a scheme based on CTMC approximation for the simulation of sticky diffusions, for which the Euler scheme may completely fail. The efficiency of our method and its advantages over alternative approaches are illustrated in the context of bond pricing in a sticky short rate model for low interest environment.

\bigskip
Key Words: diffusions, sticky boundary, Markov chain approximation, simulation,

\hspace{2.0cm}Sturm-Liouville problem.

\bigskip
AMS Subject Classification (2010): 65C40, 60J60, 65C05, 34L10, 34L16.
\end{abstract}

	%%%%%%%%%%%%%%%%%%%%%%%%%%%%%%%%%%%%%%%%%%%%%%%%%%%%%%%%%%%%%%%%%%%%%%%%%%%%%%%%%%%%%%%%%%%%%%%%%%%%%%%
	\section{Introduction}
	\label{sec:introduction}
	
	We consider a one-dimensional (1D) regular diffusion process $X$ on an interval $\mathbb{S}$ with endpoints $l$ and $r$, where $l$ is a sticky lower boundary and $r\leq\infty$.
	The point $l$ is said to be sticky, if the occupation time of the process at $l$, i.e.,
	\begin{equation*}
	O^l_t\left(X\right)=\int_0^tI\left(X_s=l\right)d\langle X\rangle_s,
	\end{equation*}
	is positive for all $t>0$ if $X_0=l$ and $\langle X\rangle$ is the quadratic variation of $X$ (see e.g., \cite{karlin1981}, Section 15.8, or \cite{revuz2005}, Chapter VI). This type of boundary behavior is first discovered by Feller in 1952 and a historical account is given in \cite{peskir2015}. The focus of this paper will be on the case where the lower boundary is sticky and the method we will develop can be easily adapted to treat models with a sticky upper boundary.
	
	Diffusion processes with sticky boundaries have found applications in various areas. For applications in physics and biology, see e.g.,  \cite{gawedzki2004}, \cite{zaslavsky2005}, \cite{kalda2007}, \cite{bonilla2002}, \cite{bonilla2007} and \cite{parashar2008}. Sticky diffusions also arise as the limiting process of time-changed random walks (\cite{amir1991}) and a class of storage models (\cite{harrison1981}, \cite{yamada1994}). Recently, \cite{nie2017} proposes to model short rates (i.e., instantaneous interest rates) by an Ornstein-Uhlenbeck (OU) diffusion with zero as the sticky lower bound (see also \cite{nie2019}). They show that this model is able to produce yield curves of various shapes observed in the market, in particular, the $S$ shape that occurred after the 2008 financial crisis. Some financial applications of a type of sticky diffusions are discussed in \cite{jiang2019}.
	
	There also exist a number of mathematical studies on sticky diffusions. The sticky Brownian motion is studied in \cite{warren1997}, \cite{bass2014} and \cite{engelbert2014} regarding the existence and uniqueness of weak and strong solutions. \cite{nie2019} establish the existence of a unique weak solution to the sticky OU SDE. Some characterizations of more general sticky diffusions can be found in \cite{ikeda1989}.
	
	This paper is mainly concerned with the computation of general 1D sticky diffusions. For the sticky Brownian motion, its transition density is obtained in closed-form in \cite{davies1994}. We are interested in calculating the action of the Feynman-Kac operator on a payoff function, which is required in many applications, as well as first passage probabilities. For the sticky OU model, \cite{nie2019} obtain semi-analytical solutions expressed as infinite series using the eigenfunction expansion method. The drawback of this approach is that approximating the infinite series can require many terms if the time horizon is not long enough, making it computationally inefficient. Furthermore, this method cannot be applied to general sticky diffusions as their eigenvalues and eigenfunctions are generally unavailable.
	
	The main contribution of this paper is a novel, general and efficient computational method based on CTMC approximation. We offer two constructions of CTMC approximation and show that a carefully designed scheme achieves second order convergence. We compare it with a well-known finite difference scheme that numerically solves the parabolic PDE associated with the Feynman-Kac operator and our method can be much faster for similar levels of accuracy. Another advantage of our method is that it is probabilistic whereas the finite difference method is not. Its probabilistic nature allows us to utilize it to also solve the simulation problem of 1D sticky diffusions, for which the widely used Euler discretization may fail completely due to its inability to simulate the sticky behavior at the boundary. Our solution is simply to simulate from the CTMC that approximates the sticky process. Although this method is biased, the bias can be properly contained and numerical experiment shows that it yields accurate results for the financial application in our analysis.
	
	CTMC approximation has been developed for many types of diffusions and Markov jump processes before for solving various types of problems, but not for sticky diffusions. See \cite{mijatovic2013}, \cite{eriksson2015}, \cite{cai2015}, \cite{song2019}, \cite{cui2017,cui2018,cui2018a}, \cite{li2017b}, \cite{li2018}, \cite{zhang2019,zhang2019c} among others for related literature on CTMC approximation for financial applications. Many CTMC-based algorithms entail computing matrix exponentials. Although there exist well-known stable algorithms such as the scaling and squaring algorithm of \cite{higham2005}, they may not be efficient enough. In this paper, we find that an extrapolation-based numerical ODE method in \cite{feng2008a} works much better than standard algorithms for computing matrix exponentials in our problem. We recommend using it for matrix exponential computation especially when the time horizon is large.
	
	It should be noted that convergence rate analysis of our method is not trivial at all. Utilizing the eigenfunction expansions of the exact and approximate solutions, we reduce the problem to the analysis of approximation errors for the eigenvalues and eigenfunctions of a Sturm-Liouville problem with its eigenparameter appearing in the boundary condition. To the best of our knowledge, our estimates are new for the numerical analysis of this important type of Sturm-Liouville problems, and hence we also contribute to this literature.

	The rest of the paper is organized as follows.
	Section \ref{sec:sticky_diffusion_processes} provides various characterizations of 1D sticky diffusions. We first give a general set of conditions that implies the existence of a unique solution to the sticky SDE. Then, we define the Feynman-Kac semigroup of the sticky process, derive its infinitesimal generator and obtain its eigenfunction expansion.
	In Section \ref{sec:ctmc_approximation}, we construct two CTMC approximation schemes for the sticky diffusion and show how to calculate the quantities of interest under the CTMC. We show that the generator of the CTMC also admits an eigenfunction expansion, which will be employed for convergence rate analysis. Section \ref{sec:convergence_analysis} derives the convergence rate while Section \ref{sec:sticky_ou_process_applications} contains various numerical results for the bond pricing problem under the sticky OU model of \cite{nie2019}. Section \ref{sec:summary} concludes and discusses future research. All proofs are collected in the appendix.
	%%%%%%%%%%%%%%%%%%%%%%%%%%%%%%%%%%%%%%%%%%%%%%%%%%%%%%%%%%%%%%%%%%%%%%%%%%%%%%%%%%%%%%%%%%%%%%%%%%%%%%%
	\section{Characterizations of 1D Sticky Diffusions}
	\label{sec:sticky_diffusion_processes}
	
	The sticky diffusion under analysis will be defined as a weak solution to the following system of stochastic differential equations
	\begin{align}
	dX_t&=\mu\left(X_t\right)I\left(X_t>l\right)dt+\sigma\left(X_t\right)I\left(X_t>l\right)dB_t+\frac{1}{2}dL^{l}_t\left(X\right), \label{eq:general_diffusion_sde} \\
	I\left(X_t=l\right)dt&=\frac{1}{2\rho} dL^{l}_t\left(X\right), \label{eq:general_local_time_sde}
	\end{align}
	with $\rho\in(0,\infty)$ representing the stickiness of $X_t$ at $l$, $I$ is the indicator function and $L^l_t$ is the (right) local time process of $X$ at $l$ defined as
	\begin{equation*}
	L^l_t\left(X\right):=\underset{\varepsilon\searrow 0}{\lim}\ \frac{1}{\varepsilon}\int_0^tI\left(X_s\in\left[l,l+\varepsilon\right]\right)d\langle X\rangle_s\ \text{in probability}.
	\end{equation*}
	
	A weak solution to the above system of SDEs is a pair of adapted processes $(X,B)$ defined on a filtered probability space $(\Omega,\mathcal{F},(\mathcal{F}_t)_{t\geq 0},\mathbb{P})$ where $B$ is a standard BM and both equations are satisfied.
	Uniqueness in law holds if for any two solutions $(X,B)$ and $(X^1,B^1)$, $X$ and $X^1$ have the same law.
	We will say that joint uniqueness in law holds if for the two solutions, the pairs $(X,B)$ and $(X^1,B^1)$ have the same law.
	
	We will first provide a set of conditions for the existence of a unique weak solution in law. Notice that two other types of boundary behavior can be recovered from the sticky case as limits. If $\rho=0$, $l$ becomes an absorbing boundary and if $\rho=\infty$, the process is instantaneously reflected at $l$. We adopt the method of time change that constructs the sticky diffusion process as a time-changed reflected diffusion process with the same drift and diffusion coefficient in the interior of the state space.
	
	\subsection{Existence and Uniqueness of Weak Solutions}
	\label{subsec:introduction_existence_sticky_diffusion}
	We make the following assumption.
	\begin{assumption}
		Assume that for the given drift and volatility coefficients, there is a unique weak solution to \eqref{eq:general_diffusion_sde} and \eqref{eq:general_local_time_sde} in the case of $\rho=\infty$ (i.e., $l$ is a reflecting boundary).
		Furthermore, suppose $\sigma^2(x)>0$ for all $x\in\mathbb{S}$. \label{assumption:continuous_derivative_coefficients}
	\end{assumption}
	The well-posedness of reflected SDEs has been well studied and conditions can be found in e.g., \cite{skorokhod1961}, \cite{watanabe1971}, \cite{tanaka1979}, \cite{lions1984} and \cite{slominski1993}.
	Let $X^1$ denote the diffusion instantaneously reflected at $l$. We apply the time change
	\begin{equation*}
	T_t=\left(t+\frac{1}{2\rho}L^l_t\left(X^1\right)\right)^{-1},
	\end{equation*}
	to $X^1$, which slows it down whenever it hits $l$. This introduces stickiness, which can be thought as slow reflection. This construction enables us to show that there exists a unique weak solution to the sticky SDE system as long as it is so for the reflecting case.
	\begin{theorem}\label{theorem:existence_and_uniqueness}
		Under Assumption \ref{assumption:continuous_derivative_coefficients}, Equations \eqref{eq:general_diffusion_sde} and \eqref{eq:general_local_time_sde} have a jointly unique weak solution for any $\rho\in(0,\infty)$.
	\end{theorem}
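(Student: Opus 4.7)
The plan is to follow the time-change programme sketched just before the theorem: build a solution of the sticky system from the (assumed unique) reflected solution, and then invert the construction to obtain uniqueness. Let $X^1$ be the reflected diffusion on some filtered probability space $(\Omega,\mathcal{F},(\mathcal{F}_t)_{t\geq 0},\mathbb{P})$, with driving Brownian motion $B^1$. Set $A_t=t+\frac{1}{2\rho}L^l_t(X^1)$ and $T_t=A_t^{-1}$; strict monotonicity of $A$ (from the additive $t$) ensures $T$ is a continuous bijection of $[0,\infty)$. Define $X_t:=X^1_{T_t}$ and the time-changed filtration $\mathcal{G}_t:=\mathcal{F}_{T_t}$. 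A candidate driving Brownian motion $B$ on $(\mathcal{G}_t)$ will be produced from $B^1$ via a Dambis--Dubins--Schwarz argument, possibly after enlarging the probability space by an independent Brownian component to supply the Brownian ``time'' missed over the sticky portion of the clock.

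\textbf{Existence.} For \eqref{eq:general_local_time_sde}, since $\sigma^2>0$ makes $X^1$ an instantaneously reflected diffusion, $\{X^1=l\}$ has zero Lebesgue measure and $L^l(X^1)$ grows only there. The change of variables $u=T_s$ gives
\[
\int_0^t I(X_s=l)\,ds=\int_0^{T_t}I(X^1_u=l)\,dA_u=\frac{1}{2\rho}L^l_{T_t}(X^1)=\frac{1}{2\rho}L^l_t(X),
\]
where $L^l_t(X)=L^l_{T_t}(X^1)$ follows from the occupation-time definition of local time combined with $\langle X\rangle_t=\langle X^1\rangle_{T_t}$. For \eqref{eq:general_diffusion_sde}, the drift and diffusion terms of $X^1$ are carried only on $\{X^1>l\}$, where $dA_u=du$, so the time change produces $\int_0^t\mu(X_s)I(X_s>l)\,ds$ and a continuous martingale with quadratic variation $\int_0^t\sigma^2(X_s)I(X_s>l)\,ds$; the Brownian motion $B$ above is chosen so that this martingale coincides with $\int_0^t\sigma(X_s)I(X_s>l)\,dB_s$. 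The boundary term of $X^1$ transforms directly into $\tfrac12\,dL^l_t(X)$, completing verification of \eqref{eq:general_diffusion_sde}.

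\textbf{Joint uniqueness.} Given any weak solution $(X,B)$, define $\tau_t:=\int_0^tI(X_s>l)\,ds$, which by \eqref{eq:general_local_time_sde} equals $t-\frac{1}{2\rho}L^l_t(X)$. Let $\eta_t$ be its right-continuous inverse, put $Y_t:=X_{\eta_t}$ and $\widetilde{\mathcal{F}}_t:=\mathcal{F}_{\eta_t}$. A direct It\^o computation on the intervals of strict monotonicity of $\tau$ would show that $Y$ satisfies the reflected SDE with the same $\mu,\sigma$, driven by a Brownian motion $\widetilde B$ reconstructed as a stochastic integral against $B$ (plus, if needed, an independent Brownian increment to compensate the jumps of $\eta$). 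Assumption \ref{assumption:continuous_derivative_coefficients} then pins down the joint law of $(Y,\widetilde B)$, and since $(X,B)$ is measurably recovered from $(Y,\widetilde B)$ through the forward time change $T_t=(t+\frac{1}{2\rho}L^l_t(Y))^{-1}$, the joint law of $(X,B)$ is unique.

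\textbf{Main obstacle.} The hardest part is the uniqueness half. The clock $\tau$ is only non-decreasing and is constant on every sticky excursion, so $\eta$ has jumps; one must argue carefully that $Y=X\circ\eta$ is continuous and adapted, that no local time at $l$ is destroyed by the time change (so $L^l_t(Y)$ matches $L^l_{\eta_t}(X)$ with the correct normalisation), and that the object $\widetilde B$ is genuinely a Brownian motion on $(\widetilde{\mathcal{F}}_t)$. Once this bookkeeping is settled, the identification of $Y$ with the unique reflected diffusion and the resulting joint uniqueness of $(X,B)$ follow immediately.
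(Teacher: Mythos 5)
Your overall skeleton (time change forward for existence, time change backward for uniqueness) is the same as the paper's, but two of your steps contain genuine gaps. First, your ``main obstacle'' is based on a false premise: the clock $\tau_t=\int_0^t I(X_s>l)\,ds$ is in fact \emph{strictly increasing} almost surely, so its inverse has no jumps and no compensating Brownian increments are needed. Proving this strict monotonicity is precisely the key lemma of the paper's uniqueness argument, and it does not follow from general facts about reflected diffusions; it follows from the sticky equations themselves. If $\tau$ were constant on some $[t_1,t_2]$, then $X_s=l$ there, so by \eqref{eq:general_local_time_sde} the local time grows, $L^l_{t_2}(X)-L^l_{t_1}(X)=2\rho(t_2-t_1)>0$, and then \eqref{eq:general_diffusion_sde} (whose drift and diffusion terms vanish on $\{X=l\}$) forces $X_{t_2}=X_{t_1}+\tfrac12\bigl(L^l_{t_2}(X)-L^l_{t_1}(X)\bigr)>l$, a contradiction. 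Your proposal never identifies this argument; instead you plan bookkeeping for jumps of $\eta$ that do not exist, and the strict monotonicity is also what guarantees that the inverse clock is continuous, that $X^1_t=X_{\eta_t}$ is a continuous semimartingale in synchronization with the time change, and that $\eta_t=t+\frac{1}{2\rho}L^l_t(X^1)$, which is how $X$ is reconstructed from $X^1$.

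Second, your route from uniqueness of the reflected pair to \emph{joint} uniqueness of $(X,B)$ is flawed. You claim $(X,B)$ is ``measurably recovered from $(Y,\widetilde B)$,'' but it is not: the increments of $B$ over the time set $\{X_s=l\}$, which has positive Lebesgue measure, do not influence $X$ and are not determined by $Y$ or by $\widetilde B_t=\int_0^{\eta_t}I(X_s>l)\,dB_s$ --- indeed, in the existence half one must \emph{add} an independent Brownian motion $B^0$ via $B_t=B^1_{T_t}+\int_0^t I(X_s=l)\,dB^0_s$ precisely to supply that missing randomness. Moreover, Assumption \ref{assumption:continuous_derivative_coefficients} only gives uniqueness in law of the reflected solution, not of the pair $(Y,\widetilde B)$ as you assert. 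The paper closes this gap by first deducing uniqueness in law of $X$ (since $X=X^1_{T}$ with $T$ a functional of $X^1$) and then invoking \cite{cherny2002}, Theorem 3.1, which upgrades uniqueness in law of $X$ to joint uniqueness in law of $(X,B)$; some such result (or an equivalent argument) is indispensable and is absent from your proposal.
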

	The proof of this theorem can be found in the appendix, which is similar to the proof of Theorem 4.1 in \cite{nie2017}.
	However, that the time change is continuous and strictly increasing is shown differently, as the arguments in \cite{nie2017} are only valid for the OU process and cannot be applied to general diffusions.
	
	The following theorem shows that it is possible to add the local time given in \eqref{eq:general_local_time_sde} as an additional term in the drift.
	\begin{theorem}\label{theorem:strong_markov_property}
		If $\rho\in(0,\infty)$, then \eqref{eq:general_diffusion_sde} and \eqref{eq:general_local_time_sde} are equivalent to the following SDE
		\begin{equation}\label{eq:general_diffusion_one_equation}
		dX_t=\mu\left(X_t\right)I\left(X_t>l\right)dt+\rho I\left(X_t=l\right)dt+\sigma\left(X_t\right)I\left(X_t>l\right)dB_t .
		\end{equation}
		Furthermore, $X$ is a strong Markov process.
	\end{theorem}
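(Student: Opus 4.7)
My plan is to treat the two assertions of the theorem separately: the equivalence of the SDE systems, and then the strong Markov property, with the equivalence being the main non-trivial content.

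For the equivalence, the forward implication is immediate: substituting \eqref{eq:general_local_time_sde} into \eqref{eq:general_diffusion_sde} replaces $\tfrac{1}{2}dL^{l}_t(X)$ by $\rho I(X_t=l)\,dt$ and yields \eqref{eq:general_diffusion_one_equation} directly. For the reverse direction I would start from a solution $(X,B)$ of \eqref{eq:general_diffusion_one_equation} and try to recover \eqref{eq:general_local_time_sde}. Because the state space has $l$ as its lower endpoint we have $X_t\geq l$ almost surely, so $(X_t-l)^+=X_t-l$, and Tanaka's formula for the right local time reads
\begin{equation*}
X_t-X_0=(X_t-l)^+-(X_0-l)^+=\int_0^t I(X_s>l)\,dX_s+\tfrac{1}{2}L^{l}_t(X).
\end{equation*}
Plugging \eqref{eq:general_diffusion_one_equation} into both $X_t-X_0$ and into $\int_0^t I(X_s>l)\,dX_s$, and exploiting $I(X_s>l)\,I(X_s=l)=0$ so that the boundary drift $\rho I(X_s=l)\,dt$ is annihilated inside the second integral, would isolate
\begin{equation*}
\tfrac{1}{2}L^{l}_t(X)=\rho\int_0^t I(X_s=l)\,ds,
\end{equation*}
which in differential form is exactly \eqref{eq:general_local_time_sde}. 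Substituting this identity back into \eqref{eq:general_diffusion_one_equation} then recovers \eqref{eq:general_diffusion_sde}.

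For the strong Markov property, I would combine the equivalence with Theorem~\ref{theorem:existence_and_uniqueness}, which gives joint uniqueness in law for \eqref{eq:general_diffusion_sde}--\eqref{eq:general_local_time_sde} and hence uniqueness in law for \eqref{eq:general_diffusion_one_equation}. Since \eqref{eq:general_diffusion_one_equation} is a standard It\^o SDE whose coefficients $b(x):=\mu(x)I(x>l)+\rho I(x=l)$ and $\tilde\sigma(x):=\sigma(x)I(x>l)$ depend only on the current state and are bounded measurable on any compact subset of $\mathbb{S}$, the associated martingale problem is well-posed. The strong Markov property then follows by the standard regular-conditional-probability argument: given a stopping time $\tau$, the shifted law of $X_{\tau+\cdot}$ conditioned on $\mathcal{F}_\tau$ again solves the martingale problem started from $X_\tau$, and by uniqueness must coincide with $\mathbb{P}_{X_\tau}$.

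The main obstacle, and where I would spend the most care, is the Tanaka step: one has to verify that $X$ is a continuous semimartingale under \eqref{eq:general_diffusion_one_equation} with quadratic variation $\int_0^\cdot\sigma^2(X_s)I(X_s>l)\,ds$ (so that $\langle X\rangle$ does not see the boundary drift), justify the use of the right local time rather than the symmetric one, and argue that $I(X_s>l)\,I(X_s=l)=0$ holds both $d\langle X\rangle$- and $ds$-almost everywhere so that the boundary drift drops cleanly out of the stochastic integral. Once this is in hand, the Markov step reduces to a routine invocation of well-posedness established in Theorem~\ref{theorem:existence_and_uniqueness}.
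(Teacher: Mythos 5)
The paper does not spell out a proof of this theorem (it is omitted with a reference to Theorem~4.1 and Corollary~4.3 of \cite{nie2017}), and your argument is exactly the standard route taken there: Tanaka's formula applied to $(X_t-l)^+=X_t-l$, with $I(X_s>l)I(X_s=l)=0$ eliminating the boundary drift inside the stochastic integral, identifies $\tfrac12 L^l_t(X)=\rho\int_0^t I(X_s=l)\,ds$ and gives the equivalence, while weak existence plus (joint) uniqueness in law from Theorem~\ref{theorem:existence_and_uniqueness} yields the strong Markov property via the well-posed martingale problem and the regular-conditional-probability argument. Your proposal is correct; the only wording to fix is that well-posedness comes from Theorem~\ref{theorem:existence_and_uniqueness}, not from the local boundedness of the coefficients, which only serves to pass between the SDE and the martingale-problem formulations.
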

	The proof of this theorem is omitted as it is similar to that of Theorem 4.1 and Corollary 4.3 in \cite{nie2017}. Theorem \ref{theorem:existence_and_uniqueness} and Theorem \ref{theorem:strong_markov_property} together show that $X$ is a diffusion process. Its stickiness at $l$ is measured by $\rho$ and the smaller the value of $\rho$, the more $X$ sticks to $l$.

	\subsection{Feynman-Kac Semigroup and Infinitesimal Generator}
	The Feynman-Kac operator $\mathcal{P}_t$ associated with diffusion $X$ is defined as
	\begin{equation}\label{eq:FK}
	\mathcal{P}_tf\left(x\right):=\mathbb{E}_x\left(\exp\left(-\int_0^tk\left(X_u\right)du\right)f\left(X_t\right)\right),\ x\in\mathbb{S},
	\end{equation}
	where the function $k(\cdot)$ is assumed to be nonnegative. The semigroup $(\mathcal{P}_t)_{t\ge0}$ is a strongly continuous contraction semigroup on $B_b(\mathbb{S})$, the space of bounded Borel-measurable functions, endowed with the maximum norm.
	
	Assume that $\mu(\cdot)$, $\sigma(\cdot)$ and $k(\cdot)$ are continuous over $\mathbb{S}$. Using the arguments in \cite{nie2017}, one can easily show that the infinitesimal generator $\mathcal{G}$ of $(\mathcal{P}_t)_{t\ge0}$ acts on
	\begin{equation}
	\mathcal{D}:=\left\{f\in C^2(\mathbb{S})\cap C_b(\mathbb{S}): \mathcal{G}f\in C_b\left(\mathbb{S}\right),\mu\left(l\right)f^\prime\left(l\right)+\frac{1}{2}\sigma^2\left(l\right)f^{\prime\prime}\left(l\right)=\rho f^\prime\left(l\right)\right\}
	\end{equation}
	as follows:
	\begin{equation}
	\mathcal{G}f\left(x\right)=\left(\mu\left(x\right)I\left(x>l\right)+\rho I\left(x=l\right)\right)f^\prime\left(x\right)+\frac{1}{2}\sigma^2\left(x\right)I\left(x>l\right)f^{\prime\prime}\left(x\right)-k\left(x\right)f\left(x\right).\label{eq:infinitesimal_generator}
	\end{equation}
	In particular, for $f\in\mathcal{D}$,
	\begin{equation}
	\mathcal{G}f\left(l\right)=\rho f^\prime\left(l\right)-k\left(l\right)f\left(l\right).\label{eq:infinitesimal_generator-lb}
	\end{equation}
	
	\begin{remark}
		The Feynman-Kac semigroup can also be seen as the transition semigroup of a process $\tilde{X}$ which is killed at rate $k$, i.e. $\tilde{X}$ has the same drift and diffusion coefficients as $X$ but is killed at its lifetime $\tilde{\zeta}=\inf\{t\geq 0:\int_0^tk(X_u)du\geq e\}$ where $e\sim\textrm{Exp}(1)$ is exponentially distributed with rate 1.
		At the lifetime $\tilde{\zeta}$ the process is sent to the cemetery state.
		It can be proved that
		\begin{equation}\label{eq:TS-Kill}
		\mathcal{P}_tf\left(x\right)=\mathbb{E}_x\left(f\left(\tilde{X}_t\right)I\left(t>\tilde{\zeta}\right)\right).
		\end{equation}
		See e.g., \cite{linetsky2008}, Section 1.1 for a detailed explanation.
	\end{remark}
	
	We can write
	\begin{equation}
	\mathcal{P}_tf\left(x\right)=\int_{\mathbb{S}}P\left(t,x,dy\right)f\left(y\right),
	\end{equation}
	where the measure $P(t,x,\cdot)$ is generally a sub-probability measure. It has a point mass at $l$ and a density on $\mathbb{S}/\{l\}$. We can further write
	\begin{equation}\label{eq:ptf-p}
	\mathcal{P}_tf\left(x\right)=P\left(t,x,l\right)f\left(l\right)+\int_\mathbb{S}p\left(t,x,y\right)f\left(y\right)dy.
	\end{equation}
	Note that we simply write $P\left(t,x,l\right)$ for $P\left(t,x,\{l\}\right)$.
	
	\subsection{Eigenfunction Expansion Representations}
	Let $S(\cdot)$ and $M(\cdot)$ be the scale and speed measure of $X$. Using the results in \cite{borodin2002}, Section II, we can obtain that the scale measure has a density, i.e., $S(dx)=s(x)dx$ with
	\begin{equation}
	s\left(x\right)=\exp\left(-\int_l^x\frac{2\mu\left(y\right)}{\sigma^2\left(y\right)}dy\right),
	\end{equation}
	and the speed measure is of a mixed type, with
	\begin{equation} M\left(dx\right)=m\left(x\right)dx+\frac{1}{\rho}\delta_l\left(dx\right)=\frac{2}{\sigma^2\left(x\right)s\left(x\right)}dx+\frac{1}{\rho}\delta_l\left(dx\right),
	\end{equation}
	where $\delta_l(\cdot)$ is the Dirac delta measure at $l$. In future discussions, we will use $M(l)$ instead of $M(\{l\})$ to simplify the notation.

	It is possible to extend $(\mathcal{P}_t)_{t\geq 0}$ to a self-adjoint semigroup on the Hilbert space $L^2(\mathbb{S},M)$ of square integrable functions on $\mathbb{S}$ with respect to $M$.
	The inner product in this space is
	\begin{equation}
	\left(f,g\right)=\int_\mathbb{S}f\left(x\right)g\left(x\right)M\left(dx\right)=\frac{1}{\rho}f\left(l\right)g\left(l\right)+\int_{\mathbb{S}^\circ}\frac{2f\left(x\right)g\left(x\right)}{\sigma^2\left(x\right)s\left(x\right)}dx.
	\end{equation}
	
	The application of spectral methods to study diffusions dates back to \cite{mckean1956}. When the spectrum of the generator of the diffusion is simple and purely discrete, a general spectral expansion reduces to an eigenfunction expansion. We make the following assumption for the existence of an eigenfunction expansion.
	\begin{assumption}\label{assumption:rb}
		The right boundary $r$ is finite, and $X$ is sent to the cemetery state $\partial$ once it reaches $r$. Suppose that $\mu$ and $\sigma$ satisfy the requirements in Assumption \ref{assumption:continuous_derivative_coefficients}. Furthermore, suppose that $\mu\in C^3(\mathbb{S})$, $\sigma^2\in C^4(\mathbb{S})$ and $k\in C^2(\mathbb{S})$ with $k(x)\geq 0$ for all $x\in\mathbb{S}$.
	\end{assumption}
	Under Assumption \ref{assumption:rb}, $X$ lives on $\mathbb{S}=[l,r)\cup\{\partial\}$ and we extend the definition of $k$ and the payoff function $f$ to $\partial$ by setting $k(\partial)=f(\partial)=0$ in \eqref{eq:FK}. Let $u(t,x)=\mathcal{P}_tf(x)$. Theorem 3.2 in \cite{linetsky2008} shows that the spectrum of $\mathcal{G}$ is purely discrete and simple, and we have the following eigenfunction expansions:
	\begin{align}
	P\left(t,x,l\right)&=M\left(l\right)\sum\limits_{k=1}^\infty\exp\left(-\lambda_kt\right)\varphi_k\left(x\right)\varphi_k\left(l\right)\qquad\ \textrm{for}\ x\in\mathbb{S},t>0, \label{eq:P-ee}\\
	p\left(t,x,y\right)&=m\left(y\right)\sum\limits_{k=1}^\infty\exp\left(-\lambda_kt\right)\varphi_k\left(x\right)\varphi_k\left(y\right)\qquad\ \textrm{for}\ x,y\in\mathbb{S},y\neq l,t>0,\label{eq:p-ee}\\
	u(t,x)&=\sum\limits_{k=1}^\infty\left(f,\varphi_k\right)\exp\left(-\lambda_kt\right)\varphi_k\left(x\right)\qquad\qquad\ \ \text{for}\ f\in L^2(\mathbb{S},M),\ x\in\mathbb{S},t>0.
	\end{align}
	where $(\lambda_k,\varphi_k)$ is the $k$-th eigenpair which solves the following Sturm-Liouville problem:
	\begin{align*}
	&\mathcal{G}\varphi\left(x\right)=-\lambda\varphi\left(x\right),\qquad\qquad\qquad\ \textrm{for all}\ x\in\mathbb{S}^\circ,\\
	&\rho\varphi^\prime\left(l\right)-\left(k\left(l\right)-\lambda\right)\varphi\left(l\right)=0, \qquad\varphi\left(r\right)=0.
	\end{align*}
	The eigenfunctions satisfy $\int_l^r\varphi_i(x)\varphi_j(x)M(dx)=\delta_{ij}$ (here $\delta_{ij}$ is the Kronecker delta) and hence form a complete orthonormal basis of $L^2(\mathbb{S},M)$.
	
	\begin{remark}
		Notice that Assumption \ref{assumption:rb} is not necessary for the spectrum to be purely discrete. For some sticky diffusions with infinity as the right boundary, for example the sticky OU process, and sticky diffusions with other types of boundary behavior at finite $r$, the spectrum is also purely discrete and hence there exist eigenfunction expansions. We make Assumption \ref{assumption:rb} for two reasons. First, in our computational method, we need to localize the infinite right boundary to a large finite value to construct a CTMC, so it makes sense to assume the sticky diffusion under analysis has a finite right boundary. The error caused by localization is typically very small. Second, assuming that $r$ is a killing boundary leads to Dirichlet condition for the Sturm-Liouville problem at $r$. This is the only type of boundary condition for $r$ that we will analyze in this paper although other types of boundary conditions (e.g., Neumann) can also be handled using our method.
	\end{remark}
	
	The eigenfunction expansion method has been extensively applied in finance for derivatives pricing. See for example \cite{davydov2003}, \cite{li2013b,li2014,li2015a}, \cite{linetsky2008} among many others. The Sturm-Liouville problem associated with the sticky diffusion is different from those studied in the cited papers in that the eigenparameter also appears in the boundary condition, which makes the problem more difficult. The following proposition provides the asymptotic behavior of the eigenvalues and eigenfunctions of this type of Sturm-Liouville problem.
	\begin{proposition}\label{prop:SLeigen}
		Under Assumption \ref{assumption:rb}, there exist constants $C_1,C_2,C_3>0$ such that for $k=1,2,\ldots$,
		\begin{align}
		C_1k^2\leq \lambda_k&\leq C_2k^2, \label{eq:sturm_liouville_theory_asymptotics_eigenvalues}\\
		\left\Vert\varphi_k^{(i)}\right\Vert_\infty&\leq C_3k^i,\qquad\textrm{for}\ i=0,1,\ldots,4. \label{eq:sturm_liouville_theory_asymptotics_eigenfunctions}
		\end{align}
		Here $\varphi_k^{(i)}(x)$ is the $i$-th order derivative of $\varphi_k(x)$ and $\varphi_k^{(0)}(x)=\varphi_k(x)$.
	\end{proposition}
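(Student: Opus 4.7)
The plan is to put the Sturm-Liouville problem into its Liouville normal form and then analyze the resulting problem, in which the spectral parameter appears in the boundary condition at $0$. Introduce the change of variables $y = y(x) := \int_l^x\sqrt{2/\sigma^2(u)}\,du$, which maps $[l,r]$ onto $[0,L]$ with $L := y(r) < \infty$ by Assumption \ref{assumption:rb}, and set $\psi(y) := h(x(y))\,\varphi(x(y))$ for a smooth positive multiplier $h$ chosen to eliminate the first-derivative term. Under Assumption \ref{assumption:rb} we have $q \in C^2[0,L]$ and $h \in C^4$, and the eigenvalue equation $\mathcal{G}\varphi = -\lambda\varphi$ is equivalent to
$$-\psi''(y) + q(y)\psi(y) = \lambda\psi(y), \quad y \in [0,L].$$
The right condition $\varphi(r)=0$ becomes $\psi(L)=0$, while the sticky condition $\rho\varphi'(l)+(\lambda - k(l))\varphi(l)=0$ turns into an affine-in-$\lambda$ condition of the form $a\psi'(0)+(b+\lambda c)\psi(0) = 0$ with explicit constants $a,b,c$ depending on the coefficients at $l$, and with $c\neq 0$.

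For the eigenvalue asymptotics, let $\psi(\cdot,\lambda)$ solve the transformed ODE subject to the boundary condition at $y=0$. A convenient normalization is $\psi(0,\lambda)=a$ and $\psi'(0,\lambda)=-(b+\lambda c)$. Writing $\psi$ via the variation-of-parameters formula based on the free solutions $\cos(\sqrt\lambda y)$ and $\sin(\sqrt\lambda y)/\sqrt\lambda$ and iterating via the standard Volterra estimate driven by $q$, one obtains uniformly in $y\in[0,L]$
$$\psi(y,\lambda) = -c\sqrt\lambda\,\sin(\sqrt\lambda y) + O(1) \quad \text{as } \lambda\to\infty.$$
The characteristic equation $\psi(L,\lambda_k)=0$ then reads $\sin(\sqrt{\lambda_k}L) = O(1/\sqrt{\lambda_k})$, forcing $\sqrt{\lambda_k} = k\pi/L + O(1/k)$, which yields the two-sided bound \eqref{eq:sturm_liouville_theory_asymptotics_eigenvalues}.

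For the eigenfunction estimates, normalize $\psi_k$ in the inner product on $L^2[0,L]\oplus\mathbb{C}$ that is the Liouville image of the $L^2(\mathbb{S},M)$ inner product; it takes the form $\int_0^L|\psi|^2\,\tilde m(y)\,dy + \tilde m_0|\psi(0)|^2$ with smooth bounded positive weights. Plugging in the leading sine asymptotic shows that the amplitude of $\psi_k$ is bounded uniformly in $k$, so $\|\psi_k\|_\infty = O(1)$. Higher derivatives are controlled by bootstrapping from the ODE: $\psi_k'' = (q-\lambda_k)\psi_k$ gives $\|\psi_k''\|_\infty = O(k^2)$; differentiating the ODE once and twice and invoking $q\in C^2[0,L]$ yields $\|\psi_k^{(3)}\|_\infty = O(k^3)$ and $\|\psi_k^{(4)}\|_\infty = O(k^4)$; the bound $\|\psi_k'\|_\infty = O(k)$ follows by differentiating the uniform asymptotic. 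Finally, writing $\varphi_k(x) = \psi_k(y(x))/h(x)$ and applying Fa\`{a} di Bruno's formula, using that $h,\,1/h$ and $y$ are $C^4$ with $y'(x)$ bounded away from zero on $[l,r]$, translates these into $\|\varphi_k^{(i)}\|_\infty = O(k^i)$ for $i=0,1,\ldots,4$.

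The main obstacle is the appearance of $\lambda$ in the boundary condition, which spoils self-adjointness on $L^2[0,L]$ alone and blocks a direct appeal to classical Sturm-Liouville asymptotics. The correct functional setting is the extended Hilbert space $L^2[0,L]\oplus\mathbb{C}$ with a boundary weight matching the Dirac mass $M(l)=1/\rho$; this is the Liouville image of $L^2(\mathbb{S},M)$, so self-adjointness of the Feynman-Kac semigroup transfers over automatically. Beyond fixing this framework, the real technical work lies in controlling the Volterra remainder uniformly on $[0,L]$ with remainders sharp enough to yield pointwise sup-norm bounds (not merely $L^2$ bounds) on $\psi_k^{(i)}$ at every order up to four; once those asymptotic expansions are established, the chain-rule translation back to $\varphi_k$ is routine.
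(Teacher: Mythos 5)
Your proposal follows the same overall route as the paper: a Liouville transformation reduces the problem to $-\psi''+q\psi=\lambda\psi$ on a finite interval with a Dirichlet condition at one end and an affine-in-$\lambda$ condition at the other, the eigenvalue and eigenfunction asymptotics are extracted from a Volterra-type representation of the solutions, and the resulting sup-norm bounds are pulled back to $\varphi_k$ using the smoothness and nondegeneracy of the transformation. The difference is in execution: the paper does not re-derive the asymptotics but quotes them from the literature on Sturm--Liouville problems with the eigenparameter in the boundary condition --- the eigenvalue expansion and the integral equation for $\psi_k$ from \cite{altinisik2004}, and the $L^2$-norm asymptotics $\Vert\psi_k\Vert_2\asymp 1/k$ from \cite{mukhtarov2004} --- and it shoots from the Dirichlet end, so the unnormalized $\psi_k$ has amplitude $O(1/k)$ and $j$-th derivative $O(k^{j-1})$, with the normalization restoring $O(k^{j})$; you instead shoot from the $\lambda$-dependent end with initial data of size $O(\lambda)$ and normalize in the boundary-weighted space $L^2\oplus\mathbb{C}$, and you get the higher-derivative bounds by bootstrapping the ODE $\psi''=(q-\lambda)\psi$ rather than by differentiating the integral equation, which is arguably cleaner and uses exactly the regularity granted by Assumption \ref{assumption:rb}. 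Two places in your sketch need to be firmed up to replace what the paper imports from its citations: (i) passing from $\sin(\sqrt{\lambda_k}L)=O(1/\sqrt{\lambda_k})$ to $\sqrt{\lambda_k}=k\pi/L+O(1/k)$ requires a root-counting step (e.g.\ Rouch\'e applied to the entire characteristic function $\lambda\mapsto\psi(L,\lambda)$, or an oscillation argument) showing that exactly one eigenvalue lies near each root and none are skipped; this indexing is precisely what yields both sides of $C_1k^2\le\lambda_k\le C_2k^2$, not just the localization of each eigenvalue near some root; and (ii) the bound $\Vert\psi_k'\Vert_\infty=O(k)$ should be obtained by differentiating the variation-of-parameters representation (as the paper does with its integral equation), since differentiating an asymptotic expansion is not justified per se. Both fixes are standard, so the plan is sound and essentially self-contains the steps the paper delegates to \cite{altinisik2004} and \cite{mukhtarov2004}.
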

	%%%%%%%%%%%%%%%%%%%%%%%%%%%%%%%%%%%%%%%%%%%%%%%%%%%%%%%%%%%%%%%%%%%%%%%%%%%%%%%%%%%%%%%%%%%%%%%%%%%%%%%
	
	\section{Continuous Time Markov Chain Approximation}
	\label{sec:ctmc_approximation}
	
	This section develops a general method to compute \eqref{eq:FK} using CTMC approximation. Note that the Feynman-Kac semigroup of $X$ is the transition semigroup of $\tilde{X}$, killed at rate $k(\cdot)$. In the following, we construct a CTMC denoted by $Y$ to imitate the transition behavior of $\tilde{X}$. The CTMC lives on a generally non-uniform grid $\mathbb{S}_n=\{x_0,x_1,\ldots,x_n,x_{n+1}\}$, with $x_0=l$, $x_{n+1}=r$ and $\mathbb{S}_n^\circ=\{x_1,\ldots,x_n\}$ is the grid on $\mathbb{S}^\circ=(l,r)$. We also put $\partial\mathbb{S}_n=\{x_0,x_{n+1}\}$, $\mathbb{S}_n^-=\{x_0\}\cup\mathbb{S}_n^\circ$ and $\mathbb{S}_n^+=\mathbb{S}_n^\circ\cup\{x_{n+1}\}$.
	Define
	\begin{equation}
	x^-=\underset{y<x,y\in\mathbb{S}_n}{\arg\min}\ \left\vert y-x\right\vert,\qquad x^+=\underset{y>x,y\in\mathbb{S}_n}{\arg\max}\ \left\vert y-x\right\vert,
	\end{equation}
	for $x\in\mathbb{S}_n$.
	Then $x^-$ is the grid point to the left of $x$ and $x^+$ is the point to the right.
	We define $x^-_0=x_0$ and $x^+_{n+1}=x_{n+1}$.
	Since the grid can be non-uniform, a distinction of the distance of a grid point to the left and right is necessary.
	Let
	\begin{equation}
	\delta^+x=x^+-x,\quad\delta^-x=x-x^-,\quad\delta x=\frac{1}{2}\left(\delta^+x+\delta^-x\right),
	\end{equation}
	which are the right, left and average grid size at a point $x$. The mesh size is denoted by $h_n=\underset{x\in\mathbb{S}^-_n}{\max}\ \delta^+x$.

	\subsection{Transition Rates of the CTMC}
	To obtain the transition rates of the CTMC that resembles the sticky diffusion $\tilde{X}$, we apply finite difference to discretize its generator given by \eqref{eq:infinitesimal_generator} and \eqref{eq:infinitesimal_generator-lb}. Introduce the following difference operators acting on a general function $g$:
	\begin{align*}
	\nabla^+g\left(x\right)&=\frac{g\left(x^+\right)-g\left(x\right)}{x^+-x}, \qquad\qquad\qquad\quad\ \nabla^-g\left(x\right)=\frac{g\left(x\right)-g\left(x^-\right)}{x-x^-}, \\
	\nabla g\left(x\right)&=\frac{\delta^-x}{2\delta x}\nabla^+g\left(x\right)+\frac{\delta^+x}{2\delta x}\nabla^-g\left(x\right), \qquad\Delta g\left(x\right)=\frac{1}{\delta x}\left(\nabla^+g\left(x\right)-\nabla^-g\left(x\right)\right).
	\end{align*}
	For $x\in\mathbb{S}_n^\circ$, we approximate $\mathcal{G}g(x)$ for $g\in\mathcal{D}$ by approximating $g^{\prime}$ by $\nabla g$ and $g^{\prime\prime}$ by $\Delta g$, which gives us
	\begin{equation}\label{eq:discretized_generator_interior}
	G_ng\left(x\right)=\mu\left(x\right)\nabla g\left(x\right)+\frac{1}{2}\sigma^2\left(x\right)\Delta g\left(x\right)-k\left(x\right)g\left(x\right),\qquad\textrm{for}\ x\in\mathbb{S}_n^\circ. \end{equation}
	For $\mathcal{G}g(x_0)$, we apply $\nabla^+g\left(x_0\right)$ to approximate $g^{\prime}(x_0)$ and obtain
	\begin{equation}\label{eq:discretized_generator_boundary-1}
	G_ng\left(x_0\right)=\rho\nabla^+g\left(x_0\right)-k\left(x_0\right)g\left(x_0\right).
	\end{equation}
	We specify $x_{n+1}$ to be a killing boundary for the CTMC, i.e., the chain is sent to the cemetery state $\partial$ once it reaches $x_{n+1}$. So the state space of the chain is $\{x_0,\cdots,x_n,\partial\}$. Let $\mathbb{G}_n$ be an $(n+1)\times(n+1)$ matrix for the transition rates among states $\{x_0,\cdots,x_n\}$. Then, based on the expression of $G_n$,
	\begin{equation*}
	\mathbb{G}_n=\begin{pmatrix} -\frac{\rho}{\delta^+ x_0}-k\left(x_0\right) & \frac{\rho}{\delta^+ x_0} & 0 & \ldots \\  \mathbb{G}_{n,1,0} & -\mathbb{G}_{n,1,0}-\mathbb{G}_{n,1,2}-k\left(x_1\right) & \mathbb{G}_{n,1,2} & \ldots \\ 0 & \mathbb{G}_{n,2,1} & -\mathbb{G}_{n,2,1}-\mathbb{G}_{n,2,3}-k\left(x_2\right) & \ldots \\ \vdots & \vdots & \vdots & \ddots \end{pmatrix},
	\end{equation*}
	which is a tridiagonal matrix and
	\begin{align}
	\mathbb{G}_{n,i,i-1}&=\frac{-\mu\left(x_i\right)\delta^+x_i+\sigma^2\left(x_i\right)}{2\delta^-x_i\delta x_i},\qquad\ i=1,\ldots,n,\\ \mathbb{G}_{n,i,i+1}&=\frac{\mu\left(x_i\right)\delta^-x_i+\sigma^2\left(x_i\right)}{2\delta^+x_i\delta x_i},\qquad\quad i=1,\ldots,n-1,\\
	\mathbb{G}_{n,0,1}&=\frac{\rho}{\delta^+x_0}.
	\end{align}
	Note that, in our calculation of \eqref{eq:FK}, we do not need the transition rates involving $\partial$. We call this construction as Scheme 1.
	
	It turns out that Scheme 1 only converges at first order. Below we provide a better scheme that improves approximation for the sticky behavior. In Scheme 2, we try to match the expectation and variance of the sticky diffusion in short time given that it starts from $x_0$ with those of the CTMC. Let $\hat{O}_t^{x_0}$ approximate the occupation time of $\tilde{X}$ at the left boundary $l=x_0$ up to time $t$, where $t$ is understood to be very small.
	Then, we have the following system of equations after ignoring higher order terms of $t$:
	\begin{align}
	\mathbb{G}_{n,0,1}\delta^+x_0t&=\mu\left(x_0\right)\left(t-\hat{O}_t^{x_0}\right)+\rho\hat{O}_t^{x_0}, \label{eq:intuition_mean} \\
	\mathbb{G}_{n,0,1}\left(\delta^+x_0\right)^2t&=\sigma^2\left(x_0\right)\left(t-\hat{O}_t^{x_0}\right), \label{eq:intuition_variance}
	\end{align}
	together with the condition that $\mathbb{G}_{n,0,0}+\mathbb{G}_{n,0,1}+k(x_0)=0$. Eq. \eqref{eq:intuition_mean} matches the expected change from $x_0$ in $t$. The LHS is clearly the quantity for the CTMC, ignoring higher order terms in $t$. The RHS can be explained as follows. Up to time $t$, the diffusion $\tilde{X}$ spent $\hat{O}^{x_0}_t$ amount of time at the boundary $x_0$ with drift $\rho$ and $t-\hat{O}^{x_0}_t$ amount of time near $x_0$ with drift approximately equal to $\mu(x_0)$. Adding them up gives the RHS of \eqref{eq:intuition_mean}. Eq.\eqref{eq:intuition_variance} can be interpreted in an analogous way by noting that the change of $\tilde{X}$ does not have any variance while it is on the boundary.
	
	Solving the above equations gives the following solution:
	\begin{align*}
	\mathbb{G}_{n,0,1}&= \frac{\rho}{\delta^+x_0+\frac{\rho-\mu\left(x_0\right)}{\sigma^2\left(x_0\right)}\left(\delta^+ x_0\right)^2},\qquad\mathbb{G}_{n,0,0}=-\mathbb{G}_{n,0,1}-k\left(x_0\right), \\
	\hat{O}_t^{x_0}&=\frac{\sigma^2(x_0) - \mu(x_0) \delta^+ x_0}{\sigma^2(x_0) + (\rho - \mu(x_0)) \delta^+ x_0} t.
	\end{align*}
	The above expression shows that $t-\hat{O}_t^{x_0}=O(\delta^+x_0)$.
	Ignoring this difference in \eqref{eq:intuition_mean} yields the formula of $\mathbb{G}_{n,0,1}$ under Scheme 1. Now it is clear that intuitively Scheme 2 is much better than Scheme 1 as it matches the first two moments while Scheme 1 only cares about the first moment. In Section 4, we will prove that Scheme 2 achieves second order convergence.
	
	The operator $G_n$ acting on $g(x_0)$ can be written for both schemes in a unified way as follows:
	\begin{equation}\label{eq:discretized_generator_boundary}
	G_ng\left(x_0\right)=\rho\beta\nabla^+g\left(x_0\right)-k\left(x_0\right)g\left(x_0\right).
	\end{equation}
	where
	\begin{equation}
	\beta=\begin{cases}
	1,&\quad\text{Scheme 1},\\
	\frac{1}{1+\frac{\rho-\mu\left(x_0\right)}{\sigma^2\left(x_0\right)}\delta^+x_0},&\quad\text{Scheme 2}.
	\end{cases}
	\end{equation}
	
	\begin{remark}
		Using Scheme 2 we can obtain CTMC approximation for the absorbing and instantaneous reflecting cases. If we set $\rho=0$, $\mathbb{G}_{n,0,1}=0$ and $\hat{O}_t^{x_0}=t$, which shows the chain is absorbed at $x_0$. If we let $\rho\to\infty$, then $\mathbb{G}_{n,0,1}\to\sigma^2(x_0)/(\delta^+x_0)^2$ and $\hat{O}_t^{x_0}\to 0$. This shows the chain is reflected at $x_0$.
	\end{remark}
	
	\subsection{CTMC Approximation of the Feynman-Kac Semigroup and First Passage Probabilities}\label{sec:CTMC-FK}
	
	To approximate \eqref{eq:FK}, we use $\mathbb{E}_x\left(f\left(Y_t\right)I\left(t>\zeta_Y\right)\right)$, where $\zeta_Y$ is the lifetime of $Y$.
	This expectation is obtained in closed-form in \cite{mijatovic2013}, which is
	\begin{equation}\label{eq:TS-Kill-CTMC}
	u_n(t,x):=\mathbb{E}_x\left(f\left(Y_t\right)I\left(t>\zeta_Y\right)\right)=\exp(\mathbb{G}_nt)f_n(x),
	\end{equation}
	where $f_n=(f(x_0),\ldots,f(x_n))^T$ is an $(n+1)$-dimensional column vector and $\exp(\mathbb{G}_nt)f_n(x)$ is the entry corresponding to $x$ in the vector $\exp(\mathbb{G}_nt)f_n$.
	
	One may also be interested in approximations of $P(t,x,l)$ and $p(t,x,y)$. To this end, denote the transition probability of the CTMC from $x$ to $y$ in time $t$ by $P_n(t,x,y)$, which is given by
	\begin{equation}\label{eq:TP-Kill-CTMC}
	P_n(t,x,y)=\exp(\mathbb{G}_nt)(x,y),
	\end{equation}
	which is the entry of the matrix $\exp(\mathbb{G}_nt)$ with its row corresponding to $x$ and its column to $y$.
	In particular, $P_n(t,x,l)$ approximates $P(t,x,l)$. For $y\in\mathbb{S}_n^\circ$, define
	\begin{equation}
	p_n(t,x,y):=P_n(t,x,y)/\delta y.
	\end{equation}
	Then, we can approximate $p(t,x,y)$ by $p_n(t,x,y)$.
	
	It is also of interests in practice to calculate the first passage probability $P(\tau^X_z>t|X_0=x)$ for $l\leq x<z$ where $\tau^X_z$ is the hitting time of $z$ of the sticky diffusion $X$. \cite{nie2019} obtain a semi-analytical formula for this probability under the sticky OU model. We use CTMC approximation to calculate the probability for general sticky diffusions. Let $Y$ be the CTMC that approximates $X$ (set $k\equiv 0$ in $\mathbb{G}_n$). Then, for $Y$, using \cite{mijatovic2013},
	\begin{equation}
	P(\tau^Y_z>t|Y_0=x)=\exp(\mathbb{H}_nt)\textbf{1}(x),
	\end{equation}
	where $\tau^Y_z=\inf\{t: Y_t>z\}$, $\mathbb{H}_n$ is the submatrix of $\mathbb{G}_n$ with entries corresponding to states smaller than $z$, and $\textbf{1}$ is a vector of ones with dimension compatible with $\mathbb{H}_n$. In particular, setting $x=l$, we obtain an approximation of the first passage probability when the diffusion starts from the sticky boundary.

	To implement the proposed method, we need to compute the matrix exponential for which there exist a slew of algorithms. In this paper we evaluate the performance of three approaches, as listed below.
	\begin{itemize}
		\item The scaling and squaring algorithm of \cite{higham2005} which is implemented in Matlab is used widely in the literature on Markov chain approximation. It is known that this algorithm can handle all kinds of matrices and it is numerically stable. This algorithm requires $O(n^3)$ operations to compute $\exp(\mathbb{G}_nt)f_n$.
		
		\item When $\mathbb{G}_n$ is a tridiagonal matrix, \cite{li2016} developed an algorithm based on the fast matrix eigendecomposition algorithm of \cite{dhillon1997}, known as the MRRR algorithm. It takes only $O(n^2)$ operations to compute $\exp(\mathbb{G}_nt)f_n$.
		
		\item The third approach is to numerically solve the ODE system the matrix exponential satisfies, which is
		\begin{equation}\label{eq:ME-ODE}
		d\mathbb{M}(t)=\mathbb{G}_n\mathbb{M}(t)dt,\ \mathbb{M}(0)=\mathbb{I}.
		\end{equation}
		We propose to apply the extrapolation approach of \cite{feng2008a}, which was not considered in the literature on CTMC approximation before.
	\end{itemize}
	Our experiment in Section \ref{sec:sticky_ou_process_applications} shows that the third approach performs the best and we recommend using it for computing $\exp(\mathbb{G}_nt)$ especially when $t$ is large.

	\subsection{Eigenfunction Expansions for the CTMC}
	\label{subsec:eigenfunction_expansion_ctmc}
	We provide an eigenfunction expansion representation for $p_n(t,x,y)$, which will be utilized later to analyze the convergence rate of CTMC approximation. Let
	\begin{align}
	M_n\left(x_0\right)&=M\left(x_0\right)\exp\left(\frac{\alpha}{\sigma^2\left(x_0\right)}\delta^+x_0\right), \label{eq:approximation_speed_measure_x0} \\
	\frac{1}{s_n\left(x_0\right)}&=\rho M_n(x_0)=\exp\left(\frac{\alpha}{\sigma^2\left(x_0\right)}\delta^+x_0\right), \label{eq:approximation_scale_function_x0}
	\end{align}
	where \begin{equation}
	\alpha=\begin{cases}
	\mu(x_0),&\quad\text{Scheme 1},\\
	\rho,&\quad\text{Scheme 2},
	\end{cases}
	\end{equation}
	and for $x=x_1,\ldots,x_n$,
	\begin{align} m_n\left(x\right)&=M_n\left(x_0\right)\beta\frac{2\rho}{-\mu\left(x_1\right)\delta^+x_1+\sigma^2\left(x_1\right)}\prod_{y=x_1}^{x^-}\frac{\mu\left(y\right)\delta^-y+\sigma^2\left(y\right)}{-\mu\left(y^+\right)\delta^+y^++\sigma^2\left(y^+\right)},
	\label{eq:approximation_speed_density_interior}\\
	\frac{1}{s_n\left(x\right)}&=\frac{1}{2}m_n(x)(\mu(x)\delta^-x+\sigma^2(x)).
	\end{align}
	where $\alpha=\mu(x_0)$ for Scheme 1, $\alpha=\rho$ for Scheme 2 and the product term equals $1$ if $x=x_1$. We also define $M_n(x)=m_n(x)\delta x$. As will be shown later, $m_n(x)\approx m(x)$ for $x\in\mathbb{S}_n^\circ$ and $M_n(x_0)\approx M(x_0)$.

	Recall the expression \eqref{eq:discretized_generator_interior} for $G_n$, the generator of the CTMC. For $x\in\mathbb{S}_n^\circ$, we can rewrite it in the following form
	\begin{equation}
	G_ng\left(x\right)=\frac{1}{m_n\left(x\right)}\frac{\delta^-x}{\delta x}\nabla^-\left(\frac{1}{s_n\left(x\right)}\nabla^+g\left(x\right)\right)-k\left(x\right)g\left(x\right),
	\end{equation}
	which is crucial for the convergence rate analysis.
	
	Consider the eigenvalue problem of $G_n$:
	\begin{align}
	&G_n\varphi(x)=-\lambda\varphi(x),\ x=x_0,\ldots,x_n,\\
	&\varphi(x_{n+1})=0.
	\end{align}
	where $\varphi(x)$ is defined on $\mathbb{S}_n$, i.e., it is a vector. Let $(\lambda^n_k,\varphi^n_k)$ be the $k$-th eigenpair. The eigenvalue problem of the operator $G_n$ is essentially the eigenvalue problem of the matrix $\mathbb{G}_n$, which is tridiagonal with negative diagonal elements and positive off-diagonal ones, and also diagonally dominant. Thus, it has $n+1$ simple and real eigenvalues  with $0\leq\lambda_1^n<\lambda_2^n<\dots<$ (\cite{li2016}, Proposition 3.6) and one expects $\lambda^n_k\approx\lambda_k$.
	For any two functions $g_1,g_2$ defined on $\mathbb{S}_n$, define their inner product $(g_1,g_2)_n:=\sum_{x\in\mathbb{S}_n^-}g_1(x)g_2(x)M_n(x)$. Since eigenfunctions are only unique up to a constant, we normalize them to satisfy $(\varphi_j^n,\varphi_k^n)_n=\delta_{j,k}$ so that $\varphi_k^n(x_i)\approx\varphi_k(x_i)$.
	
	For every $x\in\mathbb{S}_n^-$, we have the following bilinear eigenfunction expansion representations:
	\begin{align}
	P_n(t,x,y)&=M_n(y)\sum_{k=1}^n\exp(-\lambda_k^nt)\varphi_k^n(x)\varphi_k^n(y),\ y\in\mathbb{S}_n^-,\ y\in\mathbb{S}_n^-,\label{eq:P-CTMC-ee}\\
	p_n\left(t,x,y\right)&=m_n\left(y\right)\sum_{k=1}^n\exp\left(-\lambda_k^nt\right)\varphi_k^n\left(x\right)\varphi_k^n\left(y\right),\ y\in\mathbb{S}_n^\circ.\label{eq:p-CTMC-ee}
	\end{align}

	%%%%%%%%%%%%%%%%%%%%%%%%%%%%%%%%%%%%%%%%%%%%%%%%%%%%%%%%%%%%%%%%%%%%%%%%%%%%%%%%%%%%%%%%%%%%%%%%%%%%%%%
	\section{Convergence Rate Analysis}
	\label{sec:convergence_analysis}
	In this section, we derive the convergence rates of two CTMC approximation schemes. The structure of our proof is similar to that in \cite{zhang2019}. However, many details differ due to the change in the boundary behavior of the diffusion. In fact, some arguments in \cite{zhang2019} do not hold for the sticky case.
	
	We will analyze a sequence of grids satisfying the following assumption.
	
	\begin{assumption}\label{assumption:bounded_grids}
		For a sequence of grids $\{\mathbb{S}_n\}$ with $h_n\to 0$, there exists a constant $C>0$ independent of $n$ such that for every grid $\mathbb{S}_n$ we have
		\begin{equation}
		\frac{\max_{x\in\mathbb{S}^-_n}\ \delta^+x}{\min_{x\in\mathbb{S}^-_n}\ \delta^+x}\leq C.
		\end{equation}
	\end{assumption}
	This assumption essentially says the maximum step size and the minimum step size should go to zero at comparable rates, which applies in all the types of grids used in practice.
	
	Our convergence rate analysis hinges on the eigenfunction expansion representations in \eqref{eq:P-ee}, \eqref{eq:p-ee}, \eqref{eq:P-CTMC-ee} and \eqref{eq:p-CTMC-ee}. We will first develop estimates for the errors of eigenvalues and eigenfunctions and then tap the representations to derive the convergence rates for $P_n(t,x,x_0)\to P(t,x,x_0)$ and $p_n(t,x,y)\to p(t,x,y)$. In the following, $\alpha=\mu(x_0)$ for Scheme 1 and $\alpha=\rho$ for Scheme 2.
	\begin{proposition}\label{prop:convergence_speed_measure_density}
		Under Assumption \ref{assumption:rb}, the approximation error for the speed measure and density satisfies
		\begin{align}
		M_n\left(x_0\right)-M\left(x_0\right)&=M\left(x_0\right)\frac{\alpha}{\sigma^2\left(x_0\right)}\delta^+x_0+O\left(h_n^2\right), \label{eq:convergence_speed_measure_x0} \\
		m_n\left(x\right)-m\left(x\right)&=m\left(x\right)\frac{\mu\left(x\right)}{\sigma^2\left(x\right)}\left(\delta^+x-\delta^-x\right)+O\left(h_n^2\right),\qquad\forall x\in\mathbb{S}_n^\circ.	\label{eq:convergence_speed_density_interior}
		\end{align}
		Furthermore, for $x\in\mathbb{S}_n^-$ we have
		\begin{equation}
		\frac{1}{s_n\left(x\right)}-\frac{1}{s\left(x\right)}=O\left(h_n\right).
		\label{eq:convergence_scale_function}
		\end{equation}
	\end{proposition}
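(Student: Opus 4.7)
The three estimates decouple naturally. First, \eqref{eq:convergence_speed_measure_x0} is immediate from $\exp(u)-1=u+O(u^2)$ with $u=\alpha\delta^+ x_0/\sigma^2(x_0)=O(h_n)$, and the $x=x_0$ case of \eqref{eq:convergence_scale_function} is identical since $s(x_0)=s(l)=1$. The substance lies in \eqref{eq:convergence_speed_density_interior}; once that is in hand, the interior case of \eqref{eq:convergence_scale_function} follows in one line by inserting it into the identity $1/s_n(x)=\tfrac12 m_n(x)(\mu(x)\delta^- x+\sigma^2(x))$ and comparing with $1/s(x)=\tfrac12 m(x)\sigma^2(x)$: the $m_n\mu\delta^- x/2$ piece is $O(h_n)$, while the $m_n\sigma^2/2$ piece inherits the $O(h_n)$ error from \eqref{eq:convergence_speed_density_interior}.

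For \eqref{eq:convergence_speed_density_interior}, my plan is to pass to logarithms. Writing $x=x_k$ and setting $A_j=\log(\mu(x_j)\delta^- x_j+\sigma^2(x_j))$, $B_j=\log(-\mu(x_j)\delta^+ x_j+\sigma^2(x_j))$, the product formula for $m_n$ rearranges into
\begin{equation*}
\log m_n(x_k)=\log M_n(x_0)+\log\beta+\log(2\rho)-B_k+\sum_{j=1}^{k-1}(A_j-B_j).
\end{equation*}
A second-order Taylor expansion gives $A_j-B_j=(2\mu(x_j)/\sigma^2(x_j))\delta x_j+(\mu(x_j)/\sigma^2(x_j))^2(\delta^+ x_j-\delta^- x_j)\delta x_j+O(h_n^3)$. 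The first-order piece is a composite trapezoidal-rule approximation of $\int_l^{x_k}(2\mu/\sigma^2)dy$ with explicit endpoint corrections, and the smoothness granted by Assumption \ref{assumption:rb} yields $O(h_n^2)$ global error. The hard part is the second-order sum $\sum_{j=1}^{k-1}\psi(x_j)(\delta^+ x_j-\delta^- x_j)\delta x_j$ with $\psi=(\mu/\sigma^2)^2$: each summand is $O(h_n^2)$ and there are $O(1/h_n)$ of them, so a term-by-term bound only yields $O(h_n)$, which is too weak. I would overcome this by writing $(\delta^+ x_j-\delta^- x_j)\delta x_j=\tfrac12\bigl((\delta^+ x_j)^2-(\delta^- x_j)^2\bigr)=\tfrac12(c_j-c_{j-1})$ with $c_j=(\delta^+ x_j)^2$, and applying Abel summation: the boundary contributions $\psi(x_{k-1})c_{k-1}$ and $\psi(x_1)c_0$ are each $O(h_n^2)$ under Assumption \ref{assumption:bounded_grids}, while the remainder $\sum_j(\psi(x_{j+1})-\psi(x_j))c_j$ has summands of size $O(h_n)\cdot O(h_n^2)=O(h_n^3)$ and thus totals $O(h_n^2)$ as well.

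Combining these pieces with $\log m(x_k)=\log 2-\log\sigma^2(x_k)+\int_l^{x_k}(2\mu/\sigma^2)dy$, the bulk integrals cancel and one arrives at
\begin{equation*}
\log m_n(x_k)-\log m(x_k)=\frac{\alpha-\mu(x_0)}{\sigma^2(x_0)}\delta^+ x_0+\log\beta+\frac{\mu(x_k)}{\sigma^2(x_k)}(\delta^+ x_k-\delta^- x_k)+O(h_n^2).
\end{equation*}
A brief per-scheme check then shows the first two terms combine to $O(h_n^2)$: trivially for Scheme 1 where $\alpha=\mu(x_0)$ and $\beta=1$, and via the expansion $\log\beta=-(\rho-\mu(x_0))\delta^+ x_0/\sigma^2(x_0)+O(h_n^2)$ for Scheme 2. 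Exponentiating and using $e^v-1=v+O(v^2)$ for the resulting $v=O(h_n)$, then multiplying by $m(x_k)$, delivers \eqref{eq:convergence_speed_density_interior}, from which \eqref{eq:convergence_scale_function} on $\mathbb{S}_n^\circ$ follows as described in the first paragraph.
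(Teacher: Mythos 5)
Your argument is correct and follows essentially the same route as the paper's proof: take logarithms of the product formula for $m_n$, Taylor-expand each factor, recognize the first-order sum as a trapezoidal approximation of $\int_l^{x}2\mu/\sigma^2\,dy$ with endpoint corrections, cancel the scheme-dependent boundary contribution $\frac{\alpha-\mu(x_0)}{\sigma^2(x_0)}\delta^+x_0+\log\beta$ (zero for Scheme 1, $O(h_n^2)$ for Scheme 2), and exponentiate. Your Abel-summation handling of the second-order terms merely makes explicit what the paper absorbs into its $O(h_n^2)$ remainder (and the appeal to Assumption \ref{assumption:bounded_grids}, which the proposition does not formally grant, is avoidable since the boundary terms are $O(h_n^2)$ outright and $\sum_j\vert\psi(x_{j+1})-\psi(x_j)\vert$ is bounded by the Lipschitz constant of $\psi$ times $r-l$), while your short derivation of \eqref{eq:convergence_scale_function} supplies the computation the paper omits.
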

	
	\begin{proposition}\label{prop:li_or_paper_prop2}
		Consider $h_n\in(0,\delta)$, where $\delta$ is sufficiently small.
		Then there exists a constant $C>0$ such that for any $k\leq h_n^{-1/4}$,
		\begin{equation}
		\left\vert\lambda_k^n-\lambda_k\right\vert\leq Ck^4h_n^\gamma,\label{eq:or_paper_2018_prop2}
		\end{equation}
		where $C$ is independent of $k$ and $n$, $\gamma=1$ for Scheme 1 and $\gamma=2$ for Scheme 2.
	\end{proposition}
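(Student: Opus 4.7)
The plan is to bound the eigenvalue error through the Rayleigh-quotient (min--max) characterization of $\lambda_k$ and $\lambda_k^n$, along the lines of \cite{zhang2019} but with additional boundary terms reflecting the sticky behavior. First I would reformulate the two eigenvalue problems in bilinear-form language. Integrating $(-\mathcal{G}\varphi,\psi)$ by parts in $L^2(\mathbb{S},M)$ against the eigenvalue-dependent boundary condition $\rho\varphi'(l)-(k(l)-\lambda)\varphi(l)=0$ together with $\varphi(r)=0$ yields
$$a(\varphi,\psi) = \int_{\mathbb{S}^\circ}\frac{1}{s(x)}\varphi'(x)\psi'(x)\,dx + \int_{\mathbb{S}}k(x)\varphi(x)\psi(x)\,M(dx),$$
so that $\lambda_k$ is the $k$-th minimum of $a(\varphi,\varphi)/(\varphi,\varphi)$. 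A parallel summation by parts on $G_n$, using the divergence form of the interior discretization together with the boundary expression $\rho\beta\nabla^+g(x_0)-k(x_0)g(x_0)$ in \eqref{eq:discretized_generator_boundary}, produces a discrete bilinear form $a_n$ and the analogous variational formula $\lambda_k^n=\min_{V_k}\max_{\varphi\in V_k} a_n(\varphi,\varphi)/(\varphi,\varphi)_n$. The definitions of $M_n(x_0)$, $s_n(x_0)$ and the scheme-dependent $\alpha,\beta$ in \eqref{eq:approximation_speed_measure_x0}--\eqref{eq:approximation_scale_function_x0} are precisely what make this discrete identity clean.

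For the upper estimate I would take the trial subspace $V_k=\operatorname{span}\{\Pi_n\varphi_1,\ldots,\Pi_n\varphi_k\}$, where $\Pi_n$ restricts to the grid. For $\varphi=\sum_j c_j\varphi_j$ one has to control $a_n(\Pi_n\varphi,\Pi_n\varphi)-a(\varphi,\varphi)$ and $(\Pi_n\varphi,\Pi_n\varphi)_n-(\varphi,\varphi)$. Interior contributions are handled by Taylor expansion of the difference operators $\nabla^\pm,\nabla,\Delta$ against smooth eigenfunctions, invoking the uniform bounds $\|\varphi_k^{(i)}\|_\infty\leq Ck^i$ for $i\leq 4$ from Proposition \ref{prop:SLeigen}; replacing $m,s$ by $m_n,s_n$ is handled by Proposition \ref{prop:convergence_speed_measure_density}; midpoint-type quadrature errors are bounded using the quasi-uniformity Assumption \ref{assumption:bounded_grids}. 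The boundary contribution at $x_0$ is where the two schemes diverge: matching the $\rho^{-1}\varphi(l)^2$ and $\rho^{-1}k(l)\varphi(l)^2$ terms against their discrete analogues leaves a residual of order $h_n$ under Scheme 1 ($\alpha=\mu(x_0)$) but of order $h_n^2$ under Scheme 2 ($\alpha=\rho$), because Scheme 2 is precisely the two-moment matching that underlies the $\beta$ formula. Using $\|\varphi\|^2\asymp\sum_j c_j^2$ and Courant--Fischer then gives $\lambda_k^n-\lambda_k\leq Ck^4h_n^\gamma$.

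The reverse inequality is obtained symmetrically: extend each discrete eigenfunction $\varphi_j^n$ to a continuous function on $\mathbb{S}$ (say, piecewise-linear with a small boundary correction that enforces the sticky condition up to order $h_n^\gamma$) and test these extensions in the continuous Rayleigh quotient. The cap $k\leq h_n^{-1/4}$ enters at exactly this step, since it guarantees that the Gram-matrix perturbation of size $O(k^4h_n^\gamma)$ is $o(1)$, so the extended family remains linearly independent and near-orthonormal.

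The main obstacle is the boundary analysis at $x_0$: because $\lambda$ appears in the boundary condition, the inner product $(\cdot,\cdot)$ carries the point mass $M(l)=1/\rho$ and the Rayleigh quotient gains extra boundary terms, so the interior estimates imported from \cite{zhang2019} do not suffice. One has to expand $M_n(x_0)$ and $\rho\beta\nabla^+\varphi(x_0)$ carefully and combine them with Taylor estimates for $\nabla^+\varphi(x_0)-\varphi'(l)$, in which the $O(\delta^+x_0)$ remainder involves $\varphi''(l)$---a quantity already tied to $\lambda\varphi(l)$ through the eigenvalue equation. Tracking this cancellation is what ultimately yields $\gamma=2$ for Scheme 2 rather than $\gamma=1$.
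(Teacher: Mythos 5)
Your plan is a genuinely different route from the paper's. The paper does not prove the two-sided bound by comparing Rayleigh quotients; it proves the pointwise consistency estimate $\Vert G_n\varphi_k-\mathcal{G}\varphi_k\Vert_{n,\infty}\le Ck^4h_n^\gamma$, where the boundary part comes from Taylor-expanding $\varphi_k(x_1)$ about $x_0$ and invoking the domain identity $\tfrac12\sigma^2(x_0)\varphi_k''(x_0)=(\rho-\mu(x_0))\varphi_k'(x_0)$, so that the specific form of $\beta$ in Scheme 2 cancels the $\varphi_k'(x_0)$ terms and leaves an $O(k^3h_n^2)$ residual; it then uses the symmetrizability of $\mathbb{G}_n$ via $\mathbb{M}_n$ to convert this residual into $\vert\mu_k^n-\lambda_k\vert\le Ck^4h_n^\gamma$ for some eigenvalue $\mu_k^n$ of $-G_n$, and finally identifies $\mu_k^n=\lambda_k^n$ for $k\le h_n^{-1/4}$ using the auxiliary min--max bounds (Lemmas \ref{lemma:square_k_lower_upper_bounds} and \ref{lemma:lemma4_OR_lower_bound_eigenvalue_approximation}). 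In the paper the variational machinery is only used for these crude auxiliary bounds, not for the rate itself.

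The gaps in your variational plan sit exactly at the decisive points. For the upper bound, summation by parts gives $(f,-G_nf)_n=\sum_{x\in\mathbb{S}_n^-}\frac{\delta^+x}{s_n(x)}(\nabla^+f(x))^2+\sum_{x\in\mathbb{S}_n^-}k(x)f(x)^2M_n(x)+\frac{1-\beta}{s_n(x_0)}f(x_0)\nabla^+f(x_0)$; with $f=\Pi_n\varphi_k$ the last term is of size $\vert1-\beta\vert\,\vert\varphi_k(x_0)\varphi_k'(x_0)\vert=O(kh_n)$ under Scheme 2, and by Proposition \ref{prop:convergence_speed_measure_density} the point mass $M_n(x_0)$ still deviates from $M(x_0)$ at first order even under Scheme 2, so both numerator and denominator of the discrete Rayleigh quotient carry $O(h_n)$ boundary perturbations. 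Your assertion that the boundary residual is $O(h_n^2)$ ``because Scheme 2 is the two-moment matching'' is precisely what has to be proved: you must show these first-order terms cancel against $\lambda_k$ times the first-order error of the denominator, i.e. reproduce at the quadratic-form level the cancellation the paper obtains pointwise through $\beta$ and the domain identity; without that computation the scheme dependence of $\gamma$ is not established (and note the bookkeeping risk that denominator errors get multiplied by $\lambda_k\asymp k^2$, threatening the stated power $k^4$). For the reverse inequality, testing piecewise-linear extensions of the $\varphi_j^n$ in the continuous Rayleigh quotient is exactly what the paper's Lemma \ref{lemma:lemma4_OR_lower_bound_eigenvalue_approximation} does, and there the boundary term $\vert1-\beta\vert\,\vert\psi_a(x_0)\nabla^+\psi_a(x_0)\vert$ can only be controlled through the Dirichlet form, yielding $O(h_n^{1/2})$, and the interpolants lack the smoothness needed for second-order quadrature; the outcome is only $\lambda_k-\lambda_k^n\le Ch_n^{1/4}$, far from $Ck^4h_n^\gamma$. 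Your proposed ``small boundary correction enforcing the sticky condition'' does not address this: the $\lambda$-dependent condition at $l$ is a natural condition of the form (only $\varphi(r)=0$ is essential), so admissibility was never the obstacle---quantitative control of the discrete eigenfunctions near $x_0$ (in effect, of their second differences) is, and that is the missing ingredient your symmetric step would require.
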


	\begin{proposition}\label{proposition:li_2017_proposition3}
		Suppose Assumptions \ref{assumption:rb} and \ref{assumption:bounded_grids} hold. Consider $h_n\in(0,\delta)$ where $\delta$ is sufficiently small. Then there exists a constant $C>0$ independent of $k$ and $n$ such that for $1\leq k\leq h_n^{-1/5}$ the following holds
		\begin{equation}
		\left\Vert\varphi_k^n-\varphi_k\right\Vert_{n,\infty}\leq Ck^4h_n^\gamma.
		\end{equation}
		Here, $\gamma=1$ for Scheme 1 and $\gamma=2$ for Scheme 2.
	\end{proposition}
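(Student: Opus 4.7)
I would use a defect-correction argument in the discrete eigenbasis. Let $f_k:=\varphi_k\bigl|_{\mathbb{S}_n^-}$ and set $\epsilon_k(x):=G_n\varphi_k(x)-\mathcal{G}\varphi_k(x)$ for $x\in\mathbb{S}_n^-$, so that $G_nf_k = -\lambda_k f_k + \epsilon_k$. Expanding $f_k=\sum_{j=1}^n a_j^{(k)}\varphi_j^n$ with $a_j^{(k)}=(f_k,\varphi_j^n)_n$ and pairing with $\varphi_i^n$ yields the key identity
\begin{equation*}
(\lambda_k-\lambda_i^n)\,a_i^{(k)}=(\epsilon_k,\varphi_i^n)_n,\qquad i=1,\dots,n.
\end{equation*}
The proof then reduces to bounding $\epsilon_k$, inverting this identity via a spectral-gap estimate, and handling the diagonal coefficient $a_k^{(k)}-1$ through near-orthonormality.

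\textbf{Truncation error.} At interior nodes, Taylor expansion combined with the central-difference structure of $\nabla$ and $\Delta$ gives $\nabla\varphi_k(x)-\varphi_k'(x)=O(h_n^2\|\varphi_k'''\|_\infty)$ and $\Delta\varphi_k(x)-\varphi_k''(x)=\tfrac13\varphi_k'''(x)(\delta^+x-\delta^-x)+O(h_n^2\|\varphi_k^{(4)}\|_\infty)$, so combining with Proposition~\ref{prop:SLeigen} and absorbing the asymmetry term via the speed-measure correction of Proposition~\ref{prop:convergence_speed_measure_density} yields an effective interior bound $|\epsilon_k(x)|\leq Ck^4 h_n^2$. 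At the sticky node $x_0$, using \eqref{eq:infinitesimal_generator-lb} and the membership condition $\tfrac12\sigma^2(l)\varphi_k''(l)=(\rho-\mu(l))\varphi_k'(l)$ from $\mathcal{D}$,
\begin{equation*}
\epsilon_k(x_0)=\rho\beta\bigl[\nabla^+\varphi_k(x_0)-\varphi_k'(l)\bigr]+\rho(\beta-1)\varphi_k'(l)=\rho\beta\Bigl[\tfrac12\varphi_k''(l)-\tfrac{\rho-\mu(l)}{\sigma^2(l)}\varphi_k'(l)\Bigr]\delta^+x_0+O(h_n^2 k^3).
\end{equation*}
Under Scheme~2 the bracket vanishes exactly by the definition of $\beta$, giving $\epsilon_k(x_0)=O(h_n^2 k^3)$; under Scheme~1 ($\beta=1$) a residual $O(h_n k^2)$ term survives. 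This is precisely the source of the dichotomy $\gamma=1$ vs.\ $\gamma=2$.

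\textbf{Inversion and summation.} Proposition~\ref{prop:SLeigen} gives $\lambda_i\asymp i^2$, and Proposition~\ref{prop:li_or_paper_prop2} upgrades this to $|\lambda_k-\lambda_i^n|\geq c|k^2-i^2|$ for all $i\neq k$ whenever $k,i\leq h_n^{-1/5}$ and $h_n$ is small. Cauchy--Schwarz and the truncation estimate give $|a_i^{(k)}|\leq Ck^4 h_n^\gamma/|k^2-i^2|$, and together with a uniform bound $\|\varphi_i^n\|_{n,\infty}\leq C$ (a discrete analogue of Proposition~\ref{prop:SLeigen} that I would establish through a discrete Liouville transformation on the interior together with direct estimation at $x_0$), the off-diagonal sum $\sum_{i\neq k}|a_i^{(k)}|\cdot\|\varphi_i^n\|_{n,\infty}$ is controlled by $Ck^4 h_n^\gamma \sum_{i\neq k}|k^2-i^2|^{-1}$, whose tail contributes at most a logarithmic factor that is absorbed into the $k^4$. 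The diagonal coefficient is handled by $\sum_j|a_j^{(k)}|^2=(f_k,f_k)_n=(\varphi_k,\varphi_k)+O(h_n^\gamma)=1+O(h_n^\gamma)$ via Proposition~\ref{prop:convergence_speed_measure_density}, which together with the off-diagonal bound and fixing the sign of $\varphi_k^n$ via $(\varphi_k,\varphi_k^n)_n>0$ forces $|a_k^{(k)}-1|=O(k^4 h_n^\gamma)$. Combining the diagonal and off-diagonal parts gives the claimed $\|\cdot\|_{n,\infty}$ bound.

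\textbf{Main obstacle.} The hardest step is establishing the uniform-in-$(i,n)$ bound $\|\varphi_i^n\|_{n,\infty}\leq C$ coupled with the sticky-boundary consistency analysis. The point mass of $M_n$ at $x_0$ obstructs a direct transfer of continuous Sturm--Liouville $L^\infty$ estimates, so the interior Liouville transformation must be carefully matched to the discrete eigen-equation at $x_0$ uniformly in both $k$ and $n$. The tightening of the admissible range from $h_n^{-1/4}$ in Proposition~\ref{prop:li_or_paper_prop2} to $h_n^{-1/5}$ here arises precisely from controlling this matching and from passing from an $L^2$-style estimate to a pointwise $L^\infty$ one.
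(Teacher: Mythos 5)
Your overall strategy --- expanding the restriction of $\varphi_k$ in the discrete eigenbasis, reading off the coefficients from $(\lambda_k-\lambda_i^n)a_i^{(k)}=(\epsilon_k,\varphi_i^n)_n$, and summing --- is a genuinely different route from the paper, which instead fixes the normalization of $\varphi_k^n$ by matching $\nabla^+\psi_k^n(x_n)=\nabla^+\varphi_k(x_n)$, sums the defect equation $G_ne_k^n=\lambda_k^ne_k^n+(\lambda_k^n-\lambda_k)\varphi_k+(\mathcal{G}-G_n)\varphi_k$ over the grid to obtain a first-order recursion for $|e_k^n(y)|$, applies a discrete Gronwall inequality, and only then renormalizes using the quadrature estimate of Lemma \ref{lemma:or_2018_lemma7}. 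The paper's route has the decisive advantage that it never needs any information about the discrete eigenvectors $\varphi_i^n$ with $i\neq k$; your route does, and that is where the proposal has a genuine gap. The bound $\left\Vert\varphi_i^n\right\Vert_{n,\infty}\leq C$ uniformly in $i$ and $n$, which you yourself flag as the main obstacle and only sketch via an unspecified ``discrete Liouville transformation,'' is nowhere established --- the paper proves only $\left\Vert\varphi_i^n\right\Vert_{n,\infty}\leq Ci$ --- and it is needed for \emph{all} $i\leq n$, including the highly oscillatory discrete modes with $i$ comparable to $n$, for which no continuum Sturm--Liouville comparison is available. With the available bound $Ci$ your off-diagonal sum $\sum_{i\neq k}\vert a_i^{(k)}\vert\,\Vert\varphi_i^n\Vert_{n,\infty}\lesssim k^4h_n^\gamma\sum_{i\neq k}\frac{i}{\vert k^2-i^2\vert}$ picks up a factor $\log(1/h_n)$ (this is a divergence in $n$, not in $k$, so it cannot be ``absorbed into the $k^4$''), and the diagonal term $\vert a_k^{(k)}-1\vert\,\Vert\varphi_k^n\Vert_{n,\infty}$ degrades to $k^5h_n^\gamma$. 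So as written the argument does not deliver the stated $Ck^4h_n^\gamma$ bound; its key lemma is at least as hard as the proposition itself.

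A second, smaller gap concerns the spectral-gap step. The inequality $\vert\lambda_k-\lambda_i^n\vert\geq c\vert k^2-i^2\vert$ does not follow from Propositions \ref{prop:SLeigen} and \ref{prop:li_or_paper_prop2} as cited: the two-sided bound $C_1i^2\leq\lambda_i\leq C_2i^2$ alone does not separate neighboring eigenvalues (one needs the asymptotic formula from the proof of Proposition \ref{prop:SLeigen}, which is not stated as a separation result), Proposition \ref{prop:li_or_paper_prop2} controls $\lambda_i^n-\lambda_i$ only for $i\leq h_n^{-1/4}$, and the coefficients $a_i^{(k)}$ must be controlled for all $i$ up to $n$. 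The range $i>h_n^{-1/4}$ can be patched using monotonicity of $\lambda_i^n$ in $i$ together with the corollary's lower bound at $i\approx h_n^{-1/4}$, but the proposal does not address it. Your truncation-error analysis at $x_0$ (the cancellation produced by $\beta$ under Scheme 2, and the surviving $O(h_nk^2)$ term under Scheme 1) is correct and mirrors the paper's computation in the proof of Proposition \ref{prop:li_or_paper_prop2}; the failure point is the inversion/summation stage, not the consistency analysis.
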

	
	\begin{theorem}\label{theorem:convergence_transition_probability}
		Suppose Assumptions \ref{assumption:rb} and \ref{assumption:bounded_grids} hold and consider $h_n\in(0,\delta)$ for $\delta$ sufficiently small.
		For any $t>0$, $x\in\mathbb{S}_n^-$ and $y\in\mathbb{S}_n^\circ$ there holds
		\begin{align*} p_n\left(t,x,y\right)-p\left(t,x,y\right)&=p\left(t,x,y\right)\left(\frac{\mu\left(y\right)}{\sigma^2\left(y\right)}\left(\delta^+y-\delta^-y\right)\right)+C_th_n^\gamma,\\ P_n\left(t,x,x_0\right)-P\left(t,x,x_0\right)&=P\left(t,x,x_0\right)\left(\frac{\alpha}{\sigma^2\left(x_0\right)}\delta^+x_0\right)+C_th_n^\gamma,
		\end{align*}
		where $C_t>0$ is independent of $n$, $x_0$, $x$, $y$ and only depends on $t$. Furthermore, $\gamma=1$ for Scheme 1 and $\gamma=2$ for Scheme 2. \end{theorem}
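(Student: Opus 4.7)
The plan is to exploit the bilinear eigenfunction expansions \eqref{eq:p-ee}, \eqref{eq:P-ee} for the diffusion and \eqref{eq:p-CTMC-ee}, \eqref{eq:P-CTMC-ee} for the CTMC, combined with the speed-measure, eigenvalue and eigenfunction estimates of Propositions \ref{prop:convergence_speed_measure_density}, \ref{prop:li_or_paper_prop2} and \ref{proposition:li_2017_proposition3}. Writing $p(t,x,y)=m(y)S(t,x,y)$ and $p_n(t,x,y)=m_n(y)S_n(t,x,y)$ with
\begin{equation*}
S(t,x,y)=\sum_{k=1}^\infty e^{-\lambda_k t}\varphi_k(x)\varphi_k(y),\qquad S_n(t,x,y)=\sum_{k=1}^n e^{-\lambda_k^n t}\varphi_k^n(x)\varphi_k^n(y),
\end{equation*}
I would first decompose
\begin{equation*}
p_n-p=\frac{m_n(y)-m(y)}{m(y)}\,p(t,x,y)+m_n(y)\bigl(S_n(t,x,y)-S(t,x,y)\bigr).
\end{equation*}
Proposition \ref{prop:convergence_speed_measure_density} immediately converts the first term into $p(t,x,y)\cdot\tfrac{\mu(y)}{\sigma^2(y)}(\delta^+y-\delta^-y)+O(h_n^2)\cdot p(t,x,y)$, which yields the displayed leading correction and an $O(h_n^\gamma)$ remainder. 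The analogous decomposition $P_n(t,x,x_0)-P(t,x,x_0)=\frac{M_n(x_0)-M(x_0)}{M(x_0)}P(t,x,x_0)+M_n(x_0)(S_n(t,x,x_0)-S(t,x,x_0))$ handles the boundary mass case, with Proposition \ref{prop:convergence_speed_measure_density} supplying the $\tfrac{\alpha}{\sigma^2(x_0)}\delta^+x_0$ factor.

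Next I would bound $S_n-S$ by introducing the cutoff $K_n=\lfloor h_n^{-1/5}\rfloor$ so that Propositions \ref{prop:li_or_paper_prop2} and \ref{proposition:li_2017_proposition3} apply for $k\le K_n$, and splitting
\begin{equation*}
S_n-S=\sum_{k=1}^{K_n}\bigl[e^{-\lambda_k^n t}\varphi_k^n(x)\varphi_k^n(y)-e^{-\lambda_k t}\varphi_k(x)\varphi_k(y)\bigr]+\sum_{k=K_n+1}^n e^{-\lambda_k^n t}\varphi_k^n(x)\varphi_k^n(y)-\sum_{k=K_n+1}^\infty e^{-\lambda_k t}\varphi_k(x)\varphi_k(y).
\end{equation*}
For the low-mode block, the telescoping identity $ab-cd=(a-c)b+c(b-d)$, the elementary bound $|e^{-\lambda_k^n t}-e^{-\lambda_k t}|\le t\,e^{-\min(\lambda_k,\lambda_k^n)t}|\lambda_k^n-\lambda_k|$, Proposition \ref{prop:SLeigen} ($\|\varphi_k\|_\infty\lesssim k$, $\lambda_k\ge C_1k^2$), and Propositions \ref{prop:li_or_paper_prop2}, \ref{proposition:li_2017_proposition3} (which for $k\le K_n$ also force $\|\varphi_k^n\|_\infty\lesssim k$) give each summand a bound $C\,h_n^\gamma\,k^6 e^{-C_1k^2 t/2}$. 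The convergence of $\sum_k k^6 e^{-C_1k^2 t/2}$ for $t>0$ then yields the aggregate bound $C_t h_n^\gamma$.

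The harder step is the two tail sums. For the exact-diffusion tail, Proposition \ref{prop:SLeigen} yields $\sum_{k>K_n}e^{-C_1k^2 t}k^2$, which is super-polynomially small in $h_n$ and absorbed into $C_t h_n^\gamma$. The CTMC tail is the main obstacle, because Propositions \ref{prop:li_or_paper_prop2}--\ref{proposition:li_2017_proposition3} only control modes with $k\le K_n$. To handle it I would apply Cauchy--Schwarz,
\begin{equation*}
\Bigl|\sum_{k=K_n+1}^n e^{-\lambda_k^n t}\varphi_k^n(x)\varphi_k^n(y)\Bigr|\le\Bigl(\sum_{k=K_n+1}^n e^{-\lambda_k^n t}\varphi_k^n(x)^2\Bigr)^{1/2}\Bigl(\sum_{k=K_n+1}^n e^{-\lambda_k^n t}\varphi_k^n(y)^2\Bigr)^{1/2},
\end{equation*}
and split the exponential $e^{-\lambda_k^n t}=e^{-\lambda_k^n t/2}\cdot e^{-\lambda_k^n t/2}$, using the first factor (together with a Weyl-type lower bound $\lambda_k^n\ge c k^2$ valid for all $k$, obtained by comparing $\mathbb{G}_n$ with the discrete Dirichlet Laplacian on $\mathbb{S}_n$ under Assumption \ref{assumption:bounded_grids}) to supply decay $e^{-ck^2 t/2}$, and bounding what remains by the Parseval-type identity $\sum_{k}e^{-\lambda_k^n t/2}\varphi_k^n(x)^2=p_n(t/2,x,x)/m_n(x)$, which is uniformly bounded for fixed $t$. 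The delicate technical point is establishing this uniform quadratic lower bound on \emph{all} $\lambda_k^n$ (not only $k\le h_n^{-1/4}$); once that is in hand, the two tails are exponentially small in $K_n$ and the proof of both bounds, and the parallel proof for $P_n-P$, close simultaneously.
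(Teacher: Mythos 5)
Your overall architecture coincides with the paper's: the same identity $p_n-p=\frac{m_n-m}{m}\,p+m_n(S_n-S)$ (and its analogue at $x_0$ with $M_n,M$), the same cutoff $K_n\approx h_n^{-1/5}$, the same treatment of the low modes via Propositions \ref{prop:li_or_paper_prop2}, \ref{proposition:li_2017_proposition3} and \ref{prop:SLeigen}, and the same observation that the exact-diffusion tail is superpolynomially small. The divergence is the CTMC tail $\sum_{K_n<k\le n}e^{-\lambda_k^nt}\varphi_k^n(x)\varphi_k^n(y)$, and there your argument has a genuine gap: it rests on two claims you do not establish. First, the Weyl-type bound $\lambda_k^n\ge ck^2$ for \emph{all} $k\le n$, which you yourself flag as the delicate point; the paper only proves this for $k\le h_n^{-1/4}$, by perturbation from the continuous eigenvalues through Proposition \ref{prop:li_or_paper_prop2}, and a direct comparison of $\mathbb{G}_n$ with a discrete Dirichlet Laplacian is not immediate here because the sticky node carries an $O(1)$ mass $M_n(x_0)\approx 1/\rho$ and a Robin-type (eigenparameter-dependent) boundary row, so the weighted quadratic form $(\cdot,-G_n\cdot)_n/(\cdot,\cdot)_n$ is not comparable to the unweighted Dirichlet one near $x_0$. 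Second, your assertion that $\sum_k e^{-\lambda_k^n t/2}\varphi_k^n(x)^2=p_n(t/2,x,x)/m_n(x)$ is ``uniformly bounded for fixed $t$'' is unjustified: a priori the Parseval identity only gives $P_n(t/2,x,x)/M_n(x)\le 1/M_n(x)\le C/h_n$, which grows with $n$ (harmlessly, given the exponential prefactor, but as written the step is wrong and must be repaired).

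It is worth seeing how the paper closes this tail without either ingredient. Since the discrete eigenvalues are increasing, every $k>K_n$ satisfies $\lambda_k^n\ge\lambda_{K_n}^n\ge CK_n^2= Ch_n^{-2/5}$, i.e. the quadratic lower bound is needed only at the cutoff index, which lies inside the range $k\le h_n^{-1/4}$ where it is already available. Combined with the separate lemma $\left\Vert\varphi_k^n\right\Vert_{n,\infty}\le Ck$ valid for \emph{all} $1\le k\le n$ (proved by summation by parts and Cauchy--Schwarz from $\sum_{x\in\mathbb{S}_n^-}(\nabla^+\varphi_k^n(x))^2\delta^+x\le C\lambda_k^n\le Ck^2$, the last bound being Lemma \ref{lemma:square_k_lower_upper_bounds}), the tail is bounded crudely by $n^3\exp(-Ch_n^{-2/5}t)$, which is $O(h_n^2)$ for each fixed $t>0$. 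If you prefer your Cauchy--Schwarz/Parseval route, the same fix works: pull out $\exp(-\lambda_{K_n}^nt/2)\le\exp(-Ch_n^{-2/5}t/2)$ from the tail and accept the $C/h_n$ bound for the Parseval sum; no all-$k$ Weyl bound is required, and that unproved claim should either be removed or replaced by this monotonicity argument.
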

	
	The value function under the CTMC model is given by
	\begin{equation}\label{eq:ptf-p-CTMC}
	u_n\left(t,x\right)=\sum_{y\in\mathbb{S}_n^-}P_n\left(t,x,y\right)f\left(y\right)=P_n\left(t,x,x_0\right)f\left(x_0\right)+\sum_{y\in\mathbb{S}_n^\circ}p_n\left(t,x,y\right)f\left(y\right)\delta y.
	\end{equation}
	Using \eqref{eq:ptf-p} and \eqref{eq:ptf-p-CTMC} together with the estimates in Theorem \ref{theorem:convergence_transition_probability}, we can estimate the difference between $u_n(t,x)-u(t,x)$, which also depends on the smoothness of the payoff function $f$. In the following discussions, we simply assume that there exists a point $\xi\in (l,r)$ at which $f$ may not be smooth. Specifically, $f$ is $C^2$ on $(l,\xi)\cup (\xi,r)$ and if $\xi$ is a non-smooth point then either $f(\xi-)\neq f(\xi+)$ or $f(\xi-)=f(\xi+)$ but $f'(\xi-)\neq f'(\xi+)$. These types of payoffs are common in financial applications. The result can be easily extended to handle multiple non-smooth points.
	\begin{theorem} \label{theorem:discretization_error_convergence}
		Under Assumptions \ref{assumption:rb} and \ref{assumption:bounded_grids}, there exist constants $C_t,D_t>0$ independent of $n$, such that
		\begin{align*}
		\left\Vert u_n\left(t,\cdot\right)-u\left(t,\cdot\right)\right\Vert_{n,\infty}&\leq\underset{x\in[l,r)}{\sup}\ p\left(t,x,\xi\right)\left\vert f\left(\xi-\right)-f\left(\xi+\right)\right\vert\left\vert\frac{\xi^-+\xi^+}{2}-\xi\right\vert+C_th_n^\gamma, \\
		\left\vert u_n\left(t,x\right)-u\left(t,x\right)\right\vert&\geq p\left(t,x,\xi\right)\left\vert f\left(\xi-\right)-f\left(\xi+\right)\right\vert\left\vert\frac{\xi^-+\xi^+}{2}-\xi\right\vert-D_th_n^\gamma.
		\end{align*}
		Here, $D_t$ is also independent of $x$ and $\gamma=1$ for Scheme 1 and $\gamma=2$ for Scheme 2.
	\end{theorem}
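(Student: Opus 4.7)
The plan is to use the representations \eqref{eq:ptf-p} and \eqref{eq:ptf-p-CTMC} to write
\begin{equation*}
u_n(t,x) - u(t,x) = \bigl[P_n(t,x,x_0)f(x_0) - P(t,x,l)f(l)\bigr] + \Bigl[\sum_{y\in\mathbb{S}_n^\circ}p_n(t,x,y)f(y)\delta y - \int_l^r p(t,x,y)f(y)\,dy\Bigr] =: I + II.
\end{equation*}
Since $x_0 = l$, Theorem \ref{theorem:convergence_transition_probability} directly gives $I = O(h_n^\gamma)$ uniformly in $x$, with a constant depending only on $t$ through the leading factor $P(t,x,l) \leq 1$ and the bound on $f(l)$.

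Next I would split $II = II_1 + II_2$, where $II_1 = \sum_{y\in\mathbb{S}_n^\circ}[p_n(t,x,y) - p(t,x,y)]f(y)\delta y$ absorbs the density approximation error and $II_2 = \sum_{y\in\mathbb{S}_n^\circ}p(t,x,y)f(y)\delta y - \int_l^r p(t,x,y)f(y)\,dy$ is a pure midpoint-type Riemann quadrature error. For $II_1$, substitute the pointwise expansion $p_n - p = p\,\tfrac{\mu}{\sigma^2}(\delta^+y - \delta^-y) + O(h_n^\gamma)$ from Theorem \ref{theorem:convergence_transition_probability}; the uniform-in-$y$ remainder, multiplied by $f(y)\delta y$ and summed, contributes $O(h_n^\gamma)$ because $\sum_y \delta y = O(r-l)$. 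The leading part equals $\tfrac{1}{2}\sum_y g(y)[(\delta^+y)^2 - (\delta^-y)^2]$ with $g := p\mu f/\sigma^2$, and an index-shift summation by parts rewrites it as $-\tfrac{1}{2}\sum_j[g(x_{j+1}) - g(x_j)](x_{j+1} - x_j)^2$ plus boundary terms. Lipschitz continuity of $g$ on each side of $\xi$ bounds each summand by $O(h_n^3)$, while the single cell straddling $\xi$ adds at most one $O(h_n^2)$ term from the jump in $f$, so $II_1 = O(h_n^2) = O(h_n^\gamma)$ for either scheme under Assumption \ref{assumption:bounded_grids}.

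The core of the argument is $II_2$. Away from $\xi$ the integrand $p(t,x,\cdot)f(\cdot)$ is $C^2$, so the classical midpoint estimate produces a local error of $O(h_n^3)$ per cell of width $O(h_n)$ and an overall contribution of $O(h_n^2)$. For the single cell $[a,b] = [(x_{j-1}+x_j)/2, (x_j+x_{j+1})/2]$ containing $\xi$, with $x_j \in \{\xi^-, \xi^+\}$ the cell center, I would Taylor-expand $p(t,x,y)$ around $\xi$ using the $y$-smoothness of the transition density and expand $f$ separately on $(a,\xi)$ and $(\xi,b)$ using the one-sided values $f(\xi\pm)$. In the case $x_j = \xi^-$ (so $\xi \leq b = (\xi^-+\xi^+)/2$), direct algebra gives
\begin{equation*}
p(t,x,x_j)f(x_j)(b-a) - p(t,x,\xi)\bigl[f(\xi-)(\xi-a) + f(\xi+)(b-\xi)\bigr] = p(t,x,\xi)\bigl(f(\xi-) - f(\xi+)\bigr)\Bigl(\tfrac{\xi^-+\xi^+}{2} - \xi\Bigr) + O(h_n^2),
\end{equation*}
and the symmetric identity (with a sign flip) holds when $x_j = \xi^+$. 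Taking absolute values eliminates the sign ambiguity and exposes $p(t,x,\xi)|f(\xi-) - f(\xi+)||(\xi^-+\xi^+)/2 - \xi|$ as the non-removable cell error. Note that in the pure-kink case $f(\xi-) = f(\xi+)$ this leading term vanishes and the residual $O(h_n^2)$ from the cell is absorbed into the $O(h_n^\gamma)$ remainder.

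Combining via the triangle inequality $|u_n - u| \leq |I| + |II_1| + |II_2|$ and bounding $p(t,x,\xi)$ by its supremum over $x$ produces the claimed upper bound with some $C_t > 0$. The matching pointwise lower bound follows from the reverse triangle inequality $|u_n - u| \geq |II_2| - |I| - |II_1|$ together with a lower estimate for $|II_2|$ obtained by reversing the cell-by-cell analysis above, yielding some $D_t > 0$ that is independent of $n$ and $x$. The principal obstacle I anticipate is the careful bookkeeping inside $II_2$: isolating precisely the term proportional to $(\xi^-+\xi^+)/2 - \xi$ while routing all Taylor remainders for $p$, all smooth quadrature errors away from $\xi$, and the grid-irregularity contribution from $II_1$ into a single clean $O(h_n^\gamma)$ residual whose constant depends only on $t$.
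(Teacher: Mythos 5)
There is a genuine gap at the left boundary, and it is fatal precisely for the interesting case (Scheme 2, $\gamma=2$). You claim that $I=P_n(t,x,x_0)f(x_0)-P(t,x,l)f(l)$ is $O(h_n^\gamma)$ ``directly'' from Theorem \ref{theorem:convergence_transition_probability}. But that theorem gives $P_n(t,x,x_0)-P(t,x,x_0)=P(t,x,x_0)\frac{\alpha}{\sigma^2(x_0)}\delta^+x_0+C_th_n^\gamma$, whose leading term is of order $h_n$ for \emph{both} schemes (for Scheme 2, $\alpha=\rho$ and $P(t,x,x_0)$ does not vanish). So $I$ by itself is only $O(h_n)$, and your decomposition as written can deliver at most first-order convergence for Scheme 2. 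The same oversight appears in your treatment of $II_2$: the midpoint sum $\sum_{y\in\mathbb{S}_n^\circ}p(t,x,y)f(y)\delta y$ only covers $[(x_0+x_1)/2,(x_n+x_{n+1})/2]$, so $II_2$ contains the uncovered half-cell contribution $-\int_{x_0}^{(x_0+x_1)/2}p(t,x,y)f(y)\,dy\approx-\tfrac12 p(t,x,x_0^+)f(x_0)\delta^+x_0$, which is again $O(h_n)$, not $O(h_n^2)$, whenever $f(x_0)\neq0$. (The analogous right-boundary half-cell is harmless because $p(t,x,\cdot)$ vanishes at $r$.) The paper's proof hinges on exactly this point: it adds and subtracts $\tfrac12 p(t,x,x_0)f(x_0)\delta^+x_0$, uses the eigenfunction identities $P(t,x,x_0)=M(x_0)\sum_k e^{-\lambda_kt}\varphi_k(x)\varphi_k(x_0)$ and $p(t,x,x_0)=\frac{2}{\sigma^2(x_0)}\sum_k e^{-\lambda_kt}\varphi_k(x)\varphi_k(x_0)$, and shows the two $O(h_n)$ boundary terms combine into $(\alpha M(x_0)-1)O(h_n)$, which vanishes for Scheme 2 since $\alpha M(x_0)=\rho\cdot\frac1\rho=1$ (and is genuinely $O(h_n)$ for Scheme 1, which is why that scheme is only first order). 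Your two $O(h_n)$ terms would likewise cancel, but you never observe this; without the cancellation the argument fails to prove the $h_n^2$ bound.

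A secondary, repairable issue: on a non-uniform grid the evaluation point $y$ is generally not the midpoint of its cell $[(y^-+y)/2,(y+y^+)/2]$, so the ``classical midpoint estimate'' of $O(h_n^3)$ per cell does not apply to $II_2$ away from $\xi$; the per-cell error has a leading term proportional to $g'(y)\bigl((\delta^+y)^2-(\delta^-y)^2\bigr)$ and you need the same summation-by-parts/telescoping argument you invoked for $II_1$ (or Assumption \ref{assumption:bounded_grids} plus the smoothness of $p$) to get the global $O(h_n^2)$. Your identification of the non-removable term $p(t,x,\xi)\left\vert f(\xi-)-f(\xi+)\right\vert\left\vert\frac{\xi^-+\xi^+}{2}-\xi\right\vert$ at the cell containing $\xi$, and the triangle/reverse-triangle step for the upper and lower bounds, are consistent with the paper (which handles the interior via a trapezoid-type sum and cites \cite{zhang2019} for these estimates); the missing ingredient is the boundary cancellation described above.
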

	
	This theorem shows that for Scheme 1, convergence is only first order regardless of how non-smooth $f$ is at $\xi$. For Scheme 2, discontinuity in the first order derivative does not change the convergence order, however discontinuity in the payoff undermines the order. A simple grid design that would restore second order convergence for Scheme 2 is to place $\xi$ in the middle of two grid points. This midpoint rule was proposed and validated in \cite{zhang2019} for diffusions with two killing boundaries.
	%It also has the advantage of removing oscillations in the convergence and thus allowing the application of extrapolation to accelerate convergence as it was discussed in Sections 4.6 and 4.7 in \cite{zhang2019}.
	
	The convergence rate for the first passage probability $P(\tau^X_z>t|X_0=x)$ can be analyzed in essentially the same way. One can treat $z$ as a killing boundary for the diffusion and assume that $z$ is on the CTMC grid. The estimates in Theorem \ref{theorem:discretization_error_convergence} apply with the payoff $f$ identically $1$, so convergence is first order for Scheme 1 and second order for Scheme 2. It is important to place the passage level $z$ on the grid as explained in \cite{zhang2019}, Section 4.5, otherwise convergence becomes first order even for Scheme 2.
	
	%%%%%%%%%%%%%%%%%%%%%%%%%%%%%%%%%%%%%%%%%%%%%%%%%%%%%%%%%%%%%%%%%%%%%%%%%%%%%%%%%%%%%%%%%%%%%%%%%%%%%%%
	\section{Numerical Results for the Sticky OU Short Rate Model}
	\label{sec:sticky_ou_process_applications}
	
	The usefulness of the CTMC approximation to sticky diffusion processes is demonstrated by studying a sticky Ornstein-Uhlenbeck process,
	which is given by the following SDE
	\begin{equation}
	dX_t=\left(\kappa\left(\theta-X_t\right)I\left(X_t>0\right)+\rho I\left(X_t=0\right)\right)dt+\sigma I\left(X_t>0\right)dB_t,
	\end{equation}
	where $l=0$ is the left boundary. This process is used in \cite{nie2017} (also \cite{nie2019}) to model short rates, which are instantaneous interest rates. The advantages of using this model over standard short-rate models are explained in \cite{nie2017}. In particular, this model is able to produce various shapes of yield curves observed in the market, especially the S-shaped yield curve in low interest environment after the 2008 financial crisis. To illustrate this point, we consider three different sets of values for the parameters of a sticky OU process, which are listed in Table \ref{tab:model_parameter_settings}. Figure \ref{fig:zcp_yields_models} shows three shapes of yield curves produced by the three sets: upward-sloping (Model 1), inverted (Model 2) and S-shaped (Model 3). In general, the magnitude of the stickiness parameter plays an important role in controlling the shape of the curve. Calibration shows that the implied stickiness parameter from real market data is on similar scales with the values in the table (see \cite{nie2017}).
	\renewcommand{\arraystretch}{1.2}
	\begin{center}
		\label{tab:model_parameter_settings}
		\begin{tabular}{ cccccc } \toprule[1.4pt]
			Model & $\kappa$ & $\theta$ & $\sigma$ & $\rho$ & $x$ \\
			\hline
			1 & 0.4500 & 0.1000 & 0.0500 & $4.0\times 10^{-3}$ & 0.0100 \\
			\hline
			2 & 0.7500 & 0.0500 & 0.0150 & $1.0\times 10^{-6}$ & 0.0010 \\
			\hline
			3 & 0.2210 & 0.2000 & 0.0170 & $5.8\times 10^{-5}$ & 0.0000 \\
			\hline
		\end{tabular}
		\captionof{table}{Model parameters for the sticky Ornstein-Uhlenbeck process.}
	\end{center}
	\renewcommand{\arraystretch}{1}\vspace{0.3cm}
	
	\begin{figure}[htbp]
		\centering
		\includegraphics[scale=0.5]{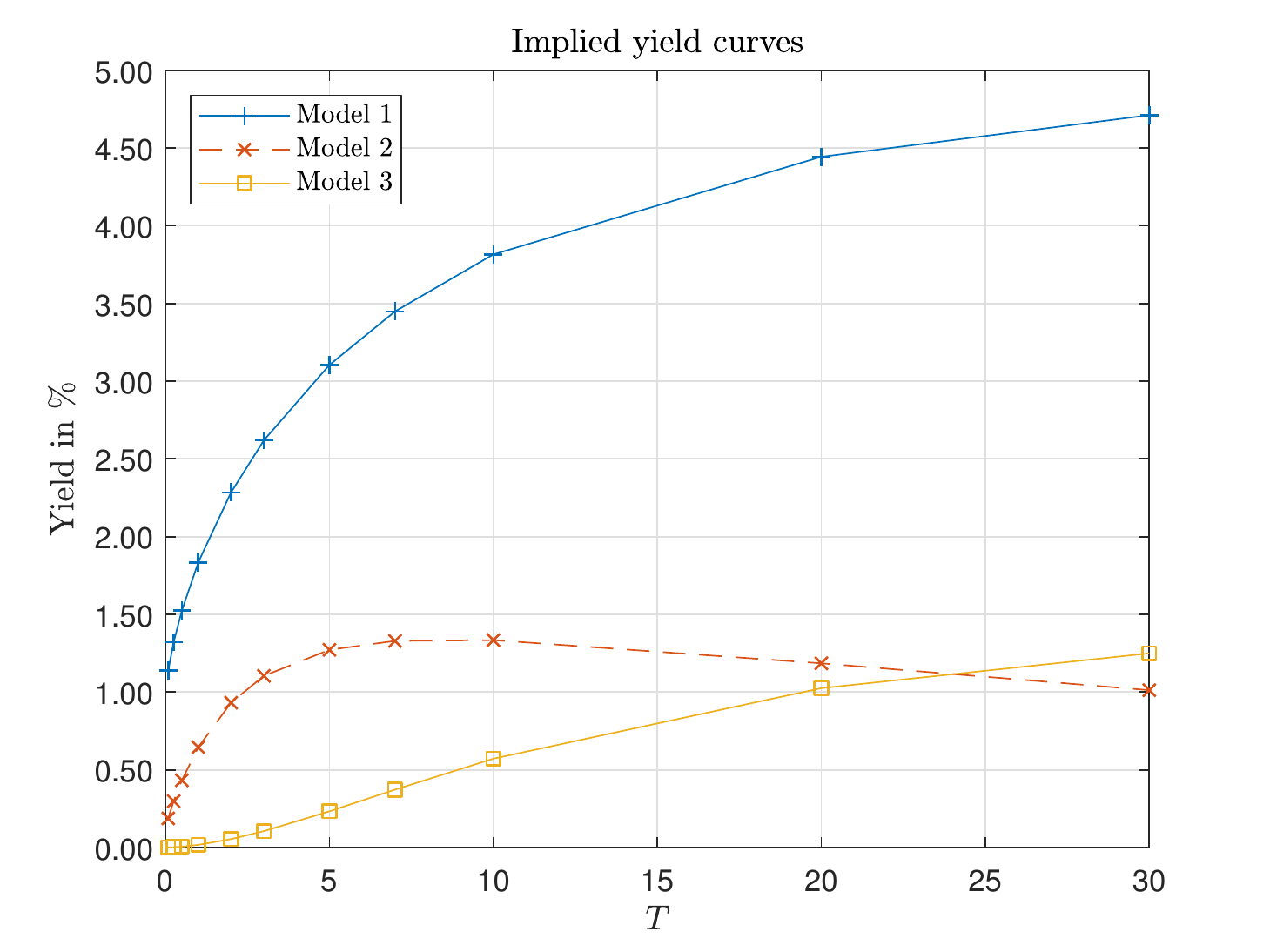}
		\caption{Yield curves for three models.}
		\label{fig:zcp_yields_models}
	\end{figure}
	
	Throughout this section, we consider pricing zero-coupon bonds with unit payoff (i.e.,
	\cite{nie2019} obtained an eigenfunction expansion formula for the bond price under the sticky OU model. We use their method to obtain benchmarks for our method. Table \ref{tab:comparison_ctmc_ee} shows the absolute difference between results of the CTMC method with 20,000 grid points and the eigenfunction expansion method using 50 to 100 terms in the expansion.
	\renewcommand{\arraystretch}{1.2}
	\begin{center}
		\label{tab:comparison_ctmc_ee}
		\begin{tabular}{c|l|l|l|l|l|l|l} \toprule[1.4pt]
			Maturity & \multicolumn{1}{c|}{6M} & \multicolumn{1}{c|}{1Y} & \multicolumn{1}{c|}{2Y} & \multicolumn{1}{c|}{3Y} & \multicolumn{1}{c|}{10Y} & \multicolumn{1}{c|}{20Y} & \multicolumn{1}{c}{30Y} \\ \hline
			Model 1  & 7.98E-10                & 2.90E-09                & 9.88E-09                & 9.24E-09                & 8.13E-08                 & 1.28E-07                 & 1.40E-07                \\ \hline
			Model 2  & 9.74E-08                & 1.48E-07                & 4.71E-07                & 8.71E-07                & 3.50E-06                 & 6.04E-06                 & 7.65E-06                \\ \hline
			Model 3  & 1.14E-05                & 9.86E-06                & 7.34E-06                & 5.43E-06                & 2.03E-06                 & 9.41E-06                 & 1.48E-05                \\ \hline
		\end{tabular}
		\captionof{table}{Differences in prices for the CTMC method and the eigenfunction expansion method of \cite{nie2019}.}
	\end{center}
	\renewcommand{\arraystretch}{1}\vspace{0.3cm}
	It can be seen that these methods yield consistent results for all maturities shown in the table. For small maturities (e.g., 1 or 3 months), the eigenfunction expansion method requires more terms in the expansion to obtain results with good accuracy and hence it can be slow. In contrast, the CTMC method works well for small maturities. In the following, we will use results from the CTMC method with a very fine grid as benchmarks.

	\subsection{Convergence Rates}
	
	We localize the state space $[0,\infty)$ to $[0,1]$. Simple calculations show that for the OU process with parameter values in Table \ref{tab:model_parameter_settings} the probability of breaching this upper boundary is extremely small for all maturities under consideration and hence the localization error is negligible. We then discretize $[0,1]$ with
	$n=100, 200, 400, 800, 1600$ using a uniform grid to detect the convergence rate numerically. Figure \ref{fig:convergence_model123} plots the results for three models with two maturities. It is clear that for Model 1 and 3, Scheme 2 converges at second order while Scheme 1 only attains first order, by checking the slope of the convergence line against that of the small triangle in the lower left corner of each plot. This validates our theoretical estimates in Theorem \ref{theorem:discretization_error_convergence}. For Model 2, the convergence orders of Scheme 1 and 2 are both around two. This may first seem unexpected, but the value of $\rho$ is very small in this model (much smaller than in the other two models), which makes $\mathbb{G}_{n,0,1}$ roughly zero under both schemes. So the two schemes perform similarly. Finally, for all models it is observed that the absolute error is greater when the maturity is longer.
	\begin{figure}[htbp]
		\centering
		\begin{subfigure}{.496\textwidth}
			\centering
			\includegraphics[width=\linewidth]{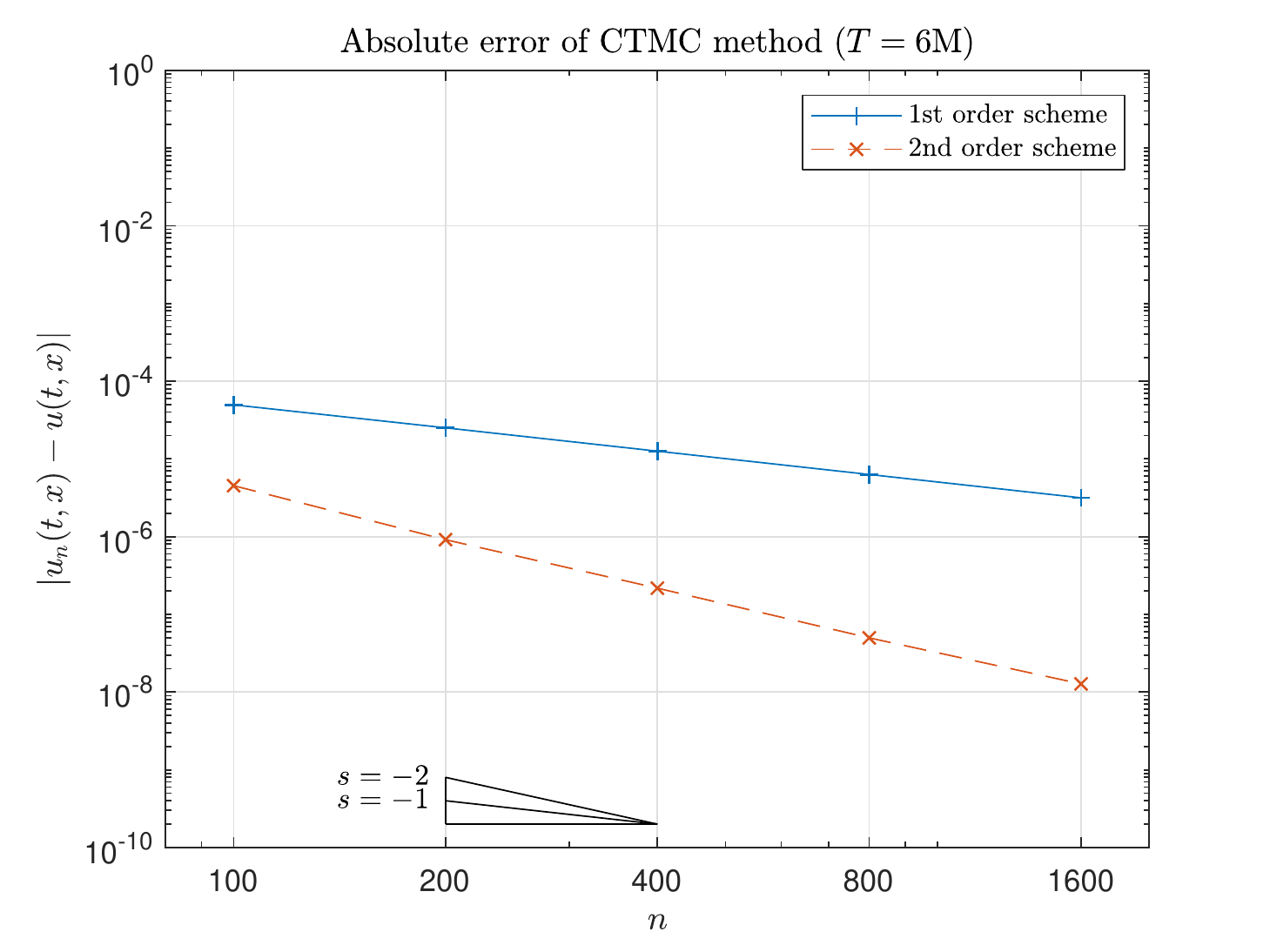}
		\end{subfigure}
		\begin{subfigure}{.496\textwidth}
			\centering
			\includegraphics[width=\linewidth]{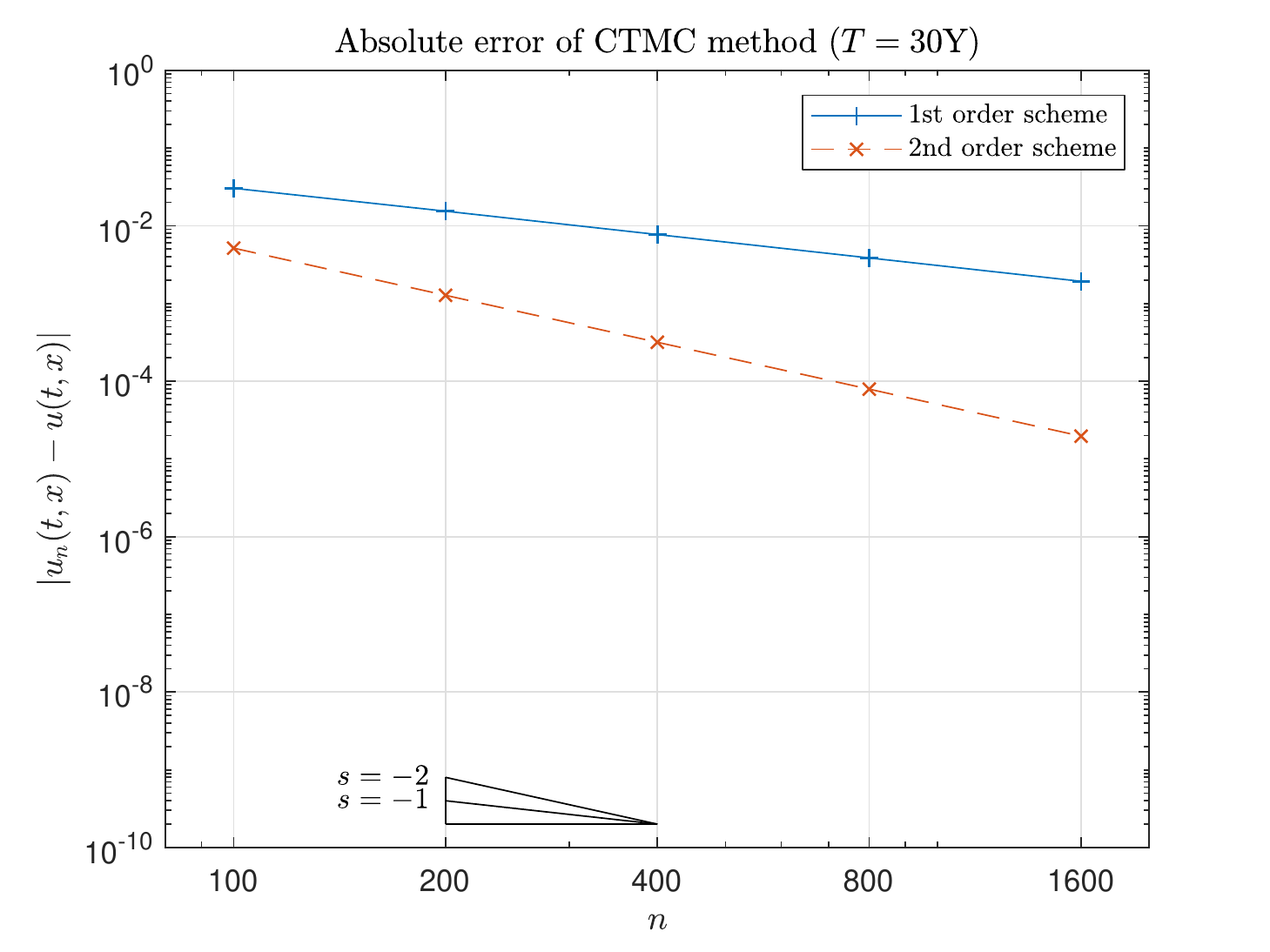}
		\end{subfigure}
		\par\bigskip
		\begin{subfigure}{.496\textwidth}
			\centering
			\includegraphics[width=\linewidth]{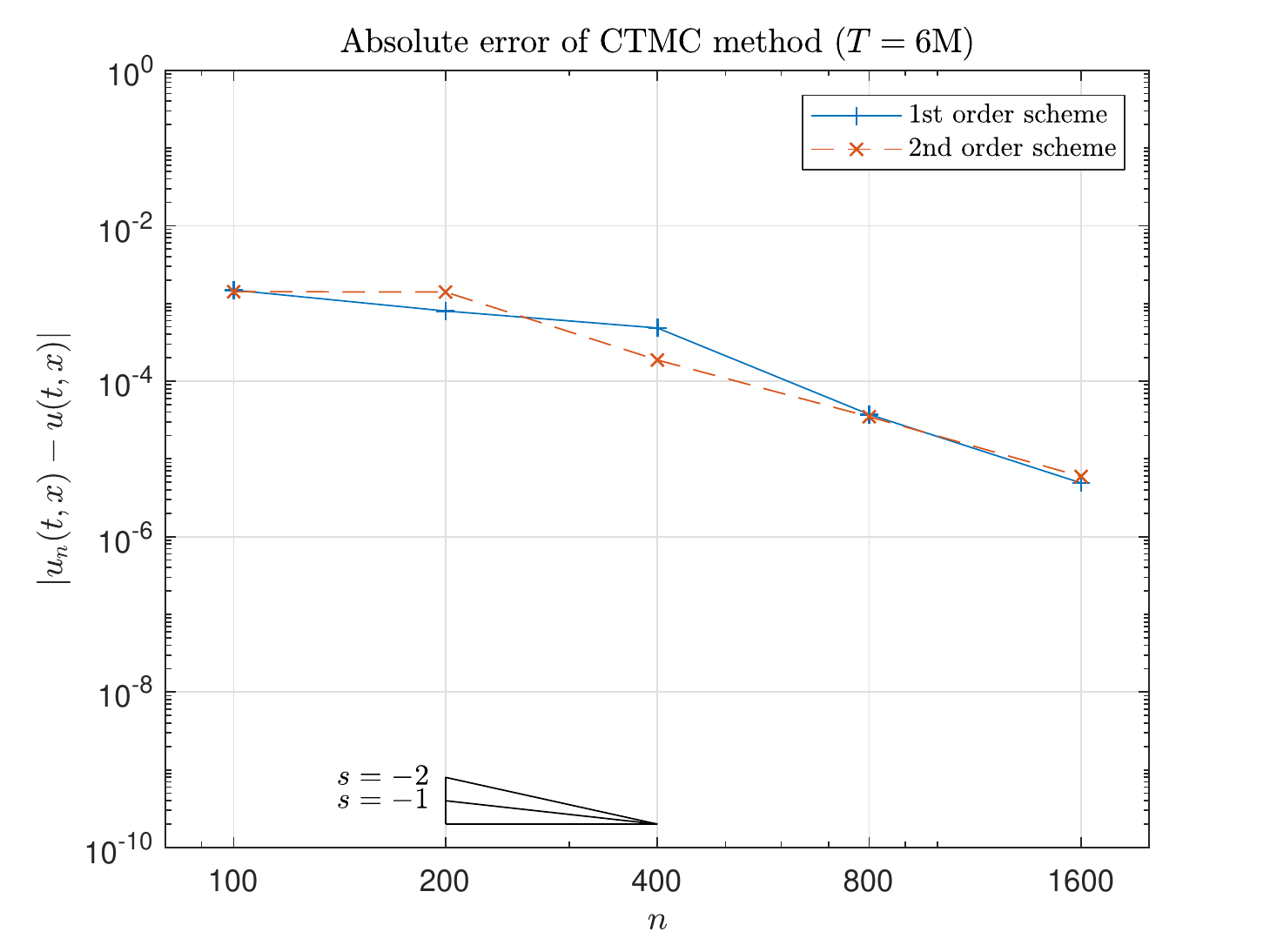}
		\end{subfigure}
		\begin{subfigure}{.496\textwidth}
			\centering
			\includegraphics[width=\linewidth]{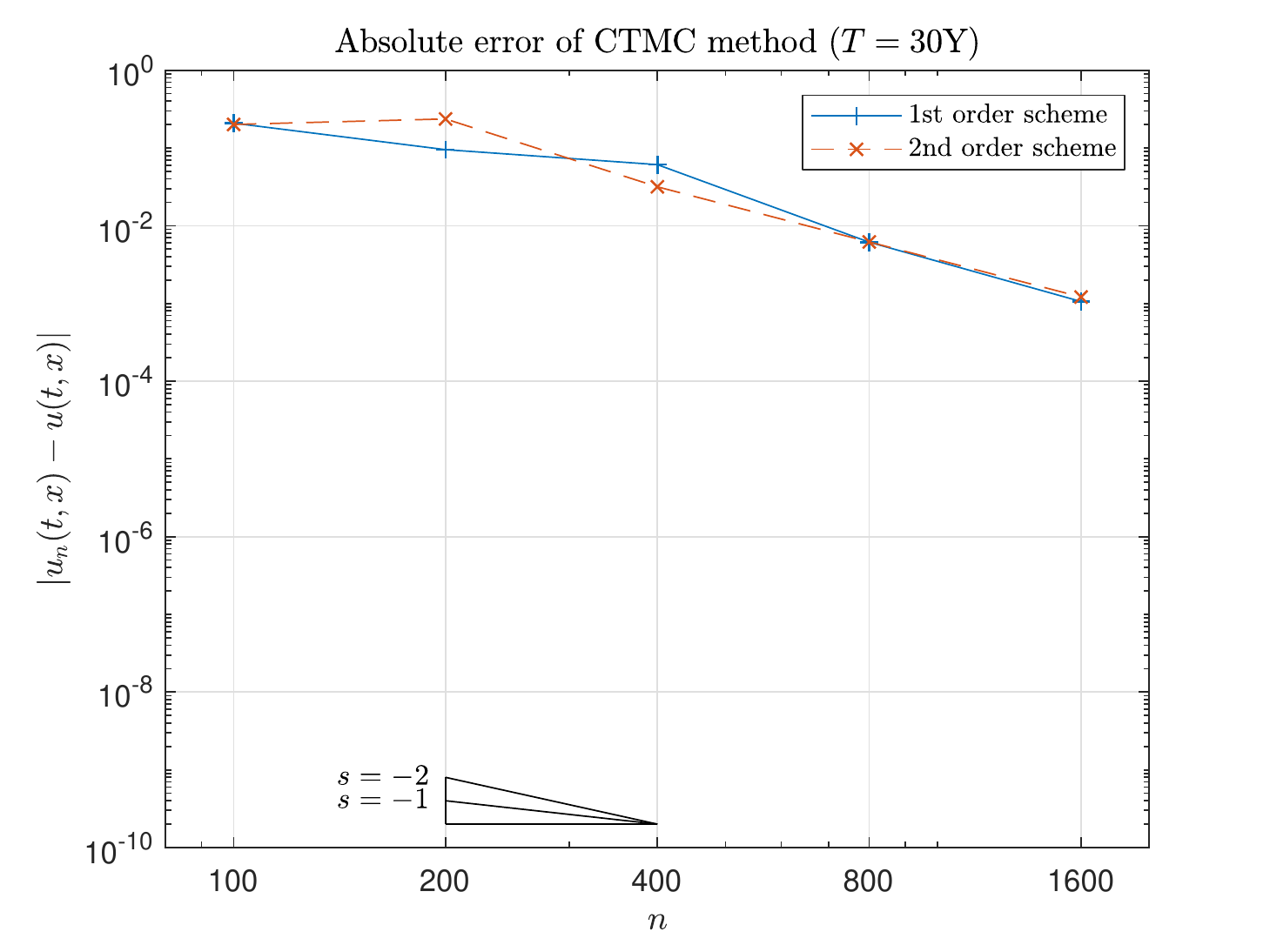}
		\end{subfigure}
		\par\bigskip
		\begin{subfigure}{.496\textwidth}
			\centering
			\includegraphics[width=\linewidth]{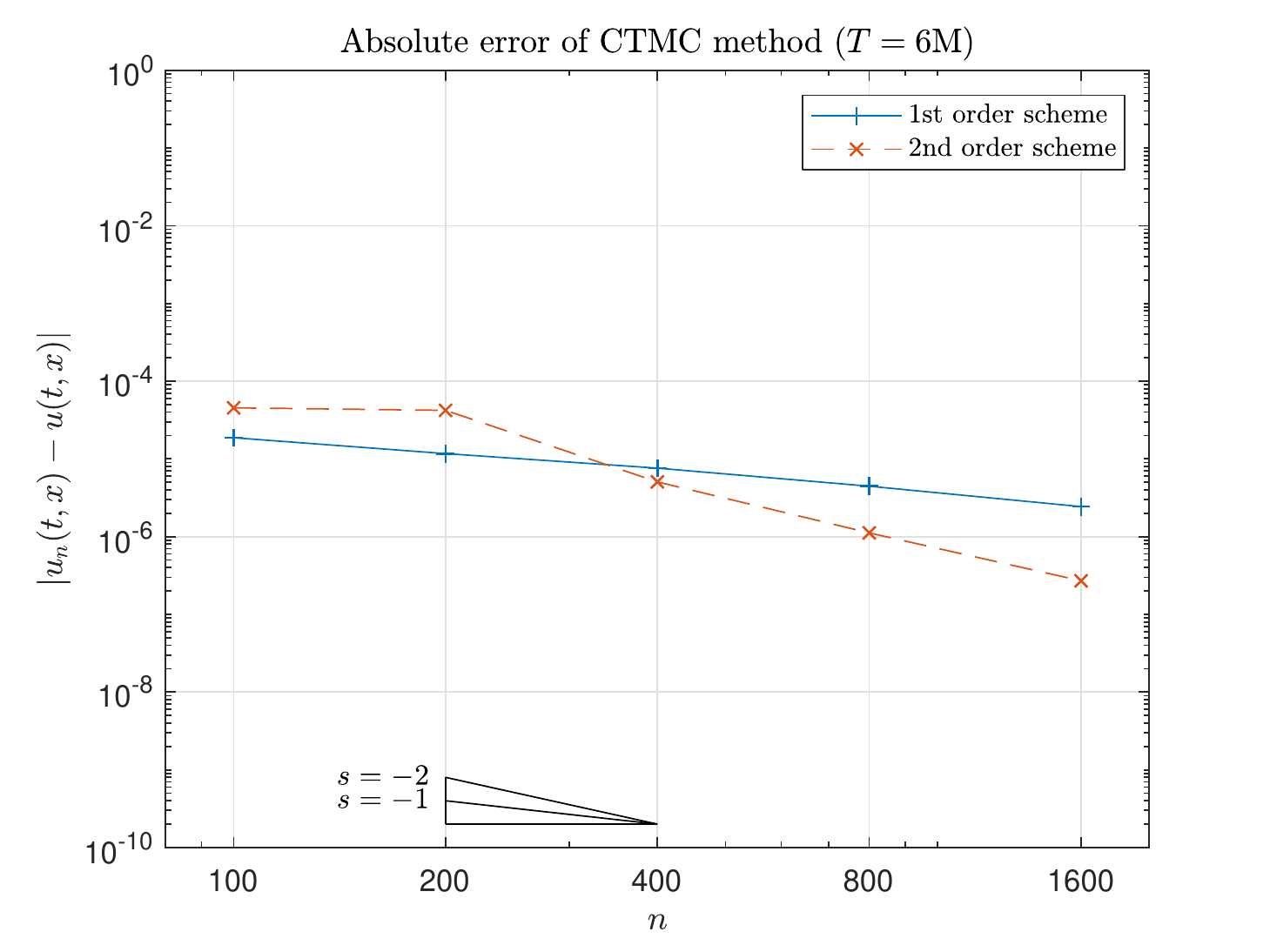}
		\end{subfigure}
		\begin{subfigure}{.496\textwidth}
			\centering
			\includegraphics[width=\linewidth]{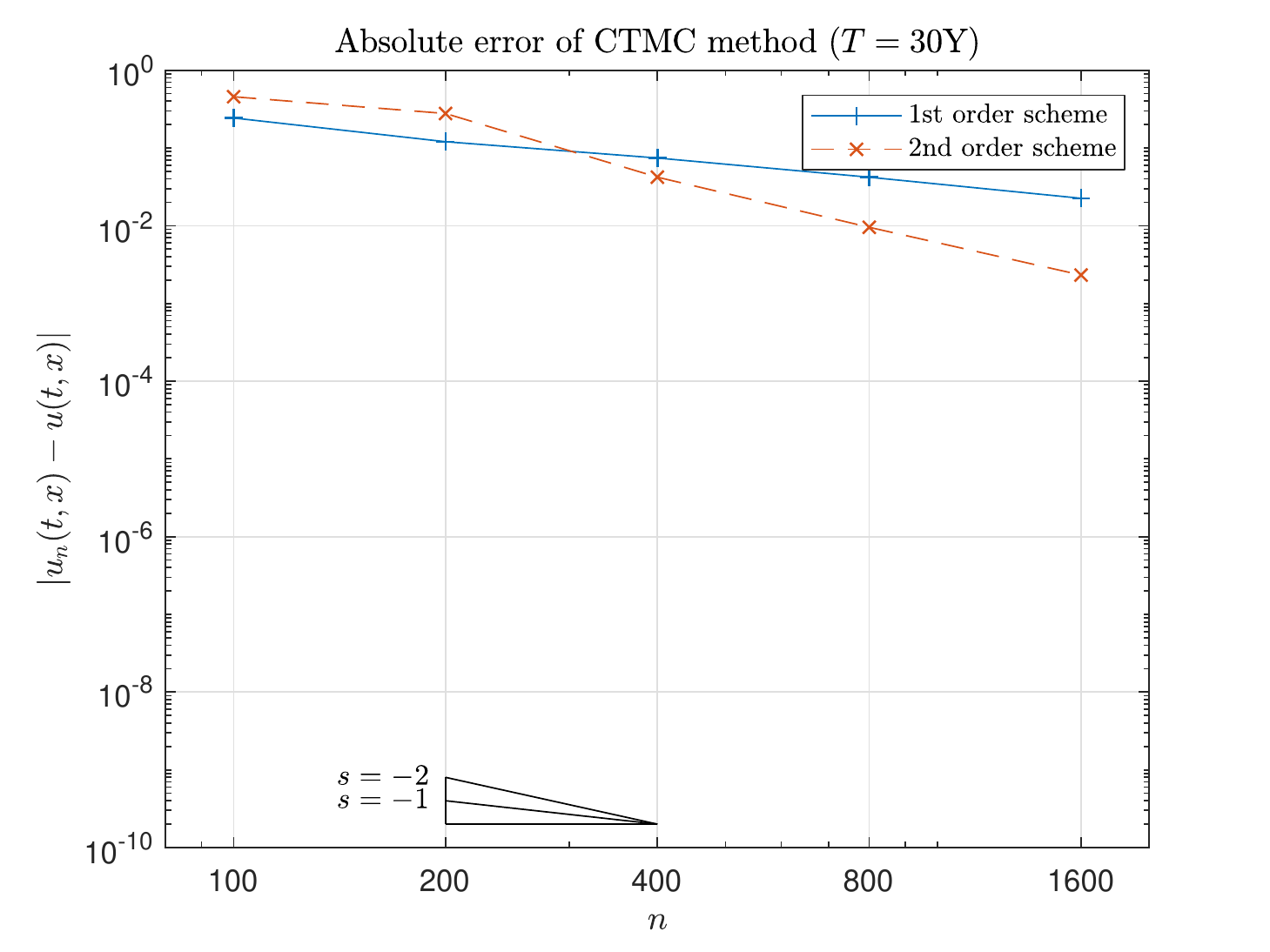}
		\end{subfigure}
		\caption{Absolute error vs. $n$ on log-log scale for Model 1 to 3 (from top to bottom) with 6M and 30Y maturities. }
		\label{fig:convergence_model123}
	\end{figure}

	\subsection{Some Comparisons}
	
	The calculation of the value function under the CTMC model is based on the calculation of the matrix exponential. In this section, we compare three ways of computing the matrix exponential in our problem as listed in Section \ref{sec:CTMC-FK} and we also compare the CTMC approximation algorithm with a standard finite difference scheme.
	
	We now briefly describe the extrapolation approach of \cite{feng2008a} for numerically solving the ODE system satisfied by the matrix exponential.
	This approach uses a basic step size $H$, where $H=0.5$ if maturity $T>0.5$ and $H=T$ otherwise, to divide the interval $[0,T]$ into smaller time periods.
	For each basic interval, $M_i=1,\ldots,s$ time steps are used to evolve the differential equation according to the implicit scheme to calculate the matrix exponential, where $s\geq 1$ denotes the extrapolation stage number.
	Let the approximation of the solution after one basic step be denoted by $A_{i,1}=u_n(H,x; M_i)$ where $M_i$ time points are used for the interval $[0,H]$.
	An extrapolation tableau is constructed by the following equation
	\begin{equation*}
	A_{i,j}=A_{i-1,j-1}+\frac{A_{i,j-1}-A_{i-1,j-1}}{\frac{M_i}{M_{i-j+1}}-1},
	\end{equation*}
	for $i=2,\ldots,s$ and $j=2,\ldots,i$.
	We then use the value $A_{s,s}$ after $s$ extrapolation steps as the starting point of the approximation over the time interval $[H,2H]$.
	This calculation of approximations after basic steps is repeated until one obtains an approximation of $u_n(T,x)$ after $M$ basic step size such that $T=MH$.
	Finally, $A_{s,s}$ after $M$ steps is the approximation of the zero-coupon bond price with maturity $T$ at time 0.
	
	For the finite difference scheme which discretizes both time and space in the PDE satisfied by the value function of the sticky diffusion, we use Crank-Nicolson time stepping, which is a standard choice in the literature for numerical solutions of PDEs. It is also considered in \cite{nie2017}. We use equal time steps with 5 steps in a month. This allows for an adequate number of time steps even for longer maturities.

	The MRRR algorithm, extrapolation approach and the finite difference method are implemented in C++ whereas the scaling and squaring method is already implemented in Matlab so we directly call it in Matlab (we have not found C++ code for this algorithm). Although a Matlab implementation may generally be less efficient than an equivalent C++ implementation, it does not affect the conclusion that the scaling and squaring method is typically the slowest for obtaining similar levels of accuracy, as it requires the highest amount of computational complexity.
	
	The results are obtained using a workstation running CentOS 7 with 3.2 GHz CPUs and memory of 256 GB. All numerical calculations are run 10 times to obtain the average running time.
	Figure \ref{fig:accuracy_computational_time} displays the comparison for all three models with 1Y and 10Y maturities.
	\begin{figure}[htbp]
		\centering
		\begin{subfigure}{.496\textwidth}
			\centering
			\includegraphics[width=\linewidth]{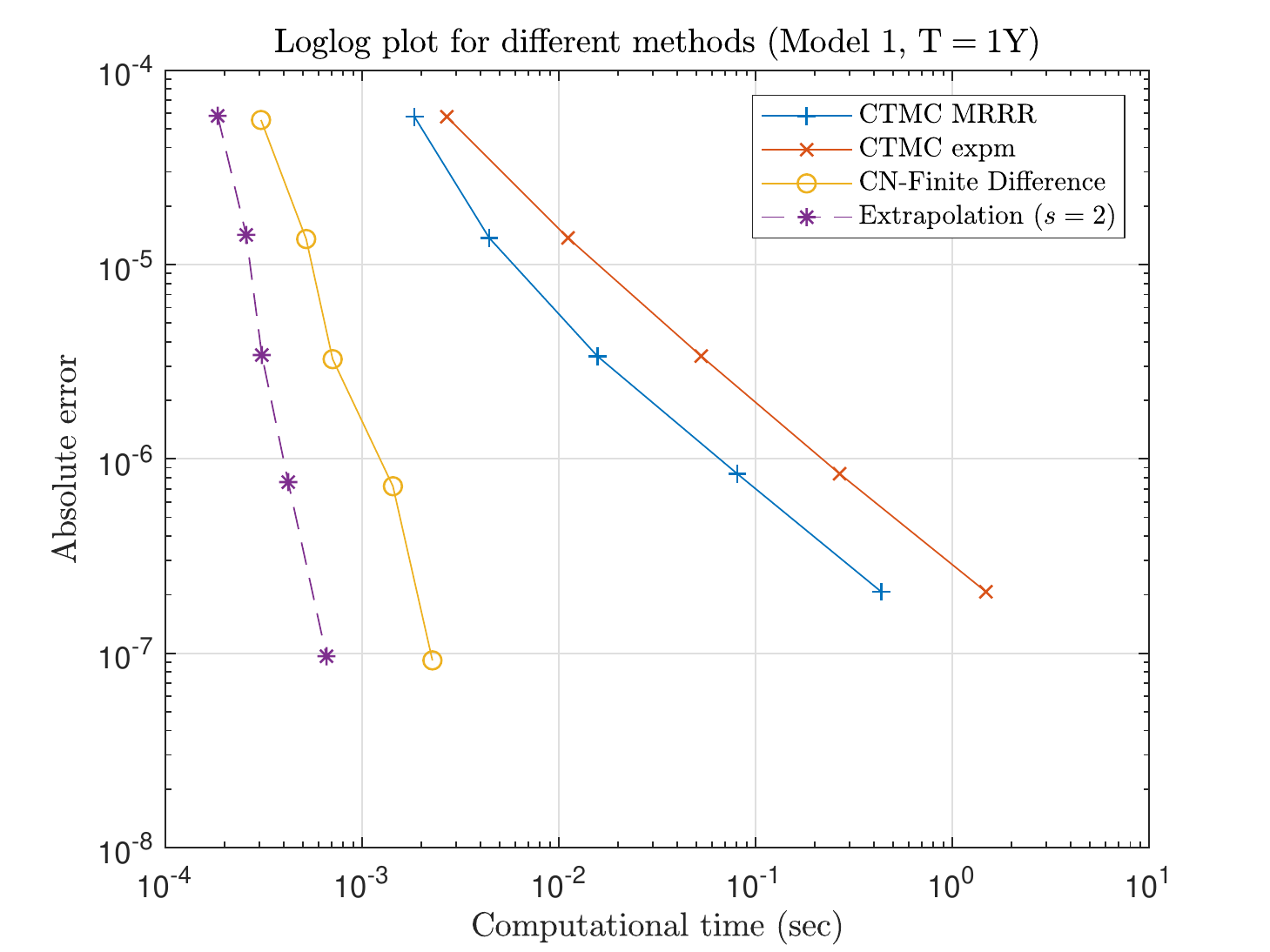}
		\end{subfigure}
		\begin{subfigure}{.496\textwidth}
			\centering
			\includegraphics[width=\linewidth]{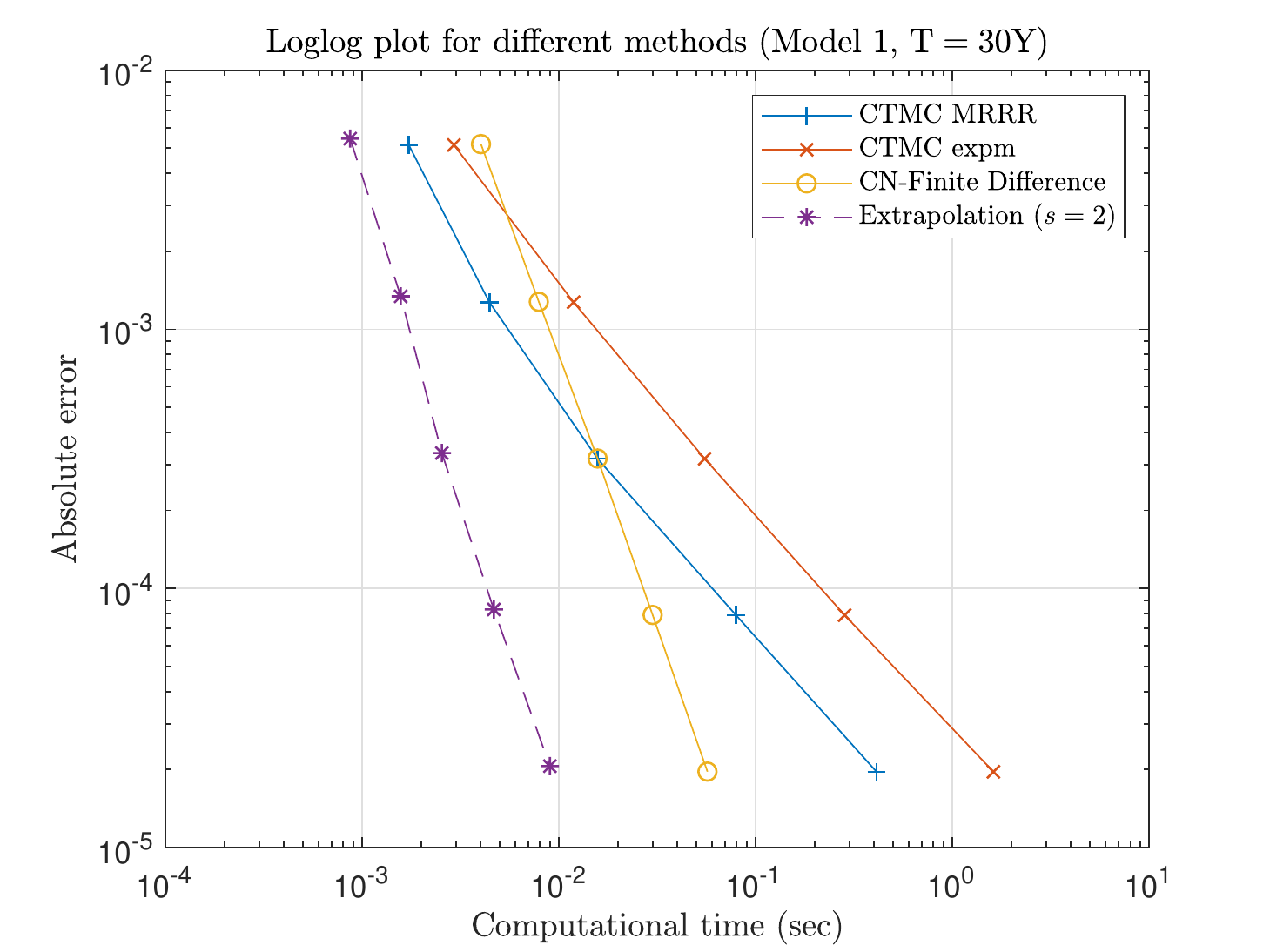}
		\end{subfigure}
		\par\bigskip
		\begin{subfigure}{.496\textwidth}
			\centering
			\includegraphics[width=\linewidth]{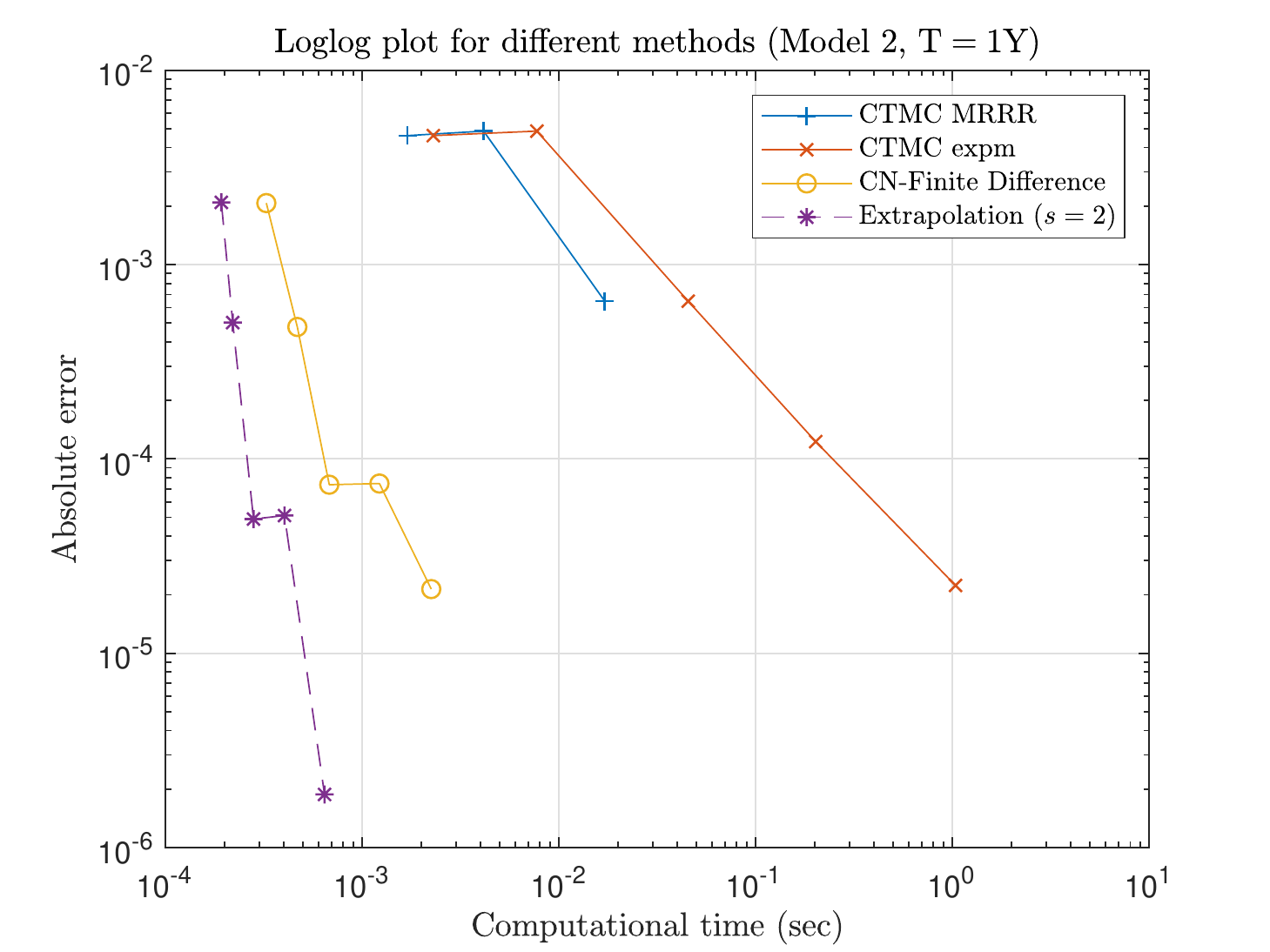}
		\end{subfigure}
		\begin{subfigure}{.496\textwidth}
			\centering
			\includegraphics[width=\linewidth]{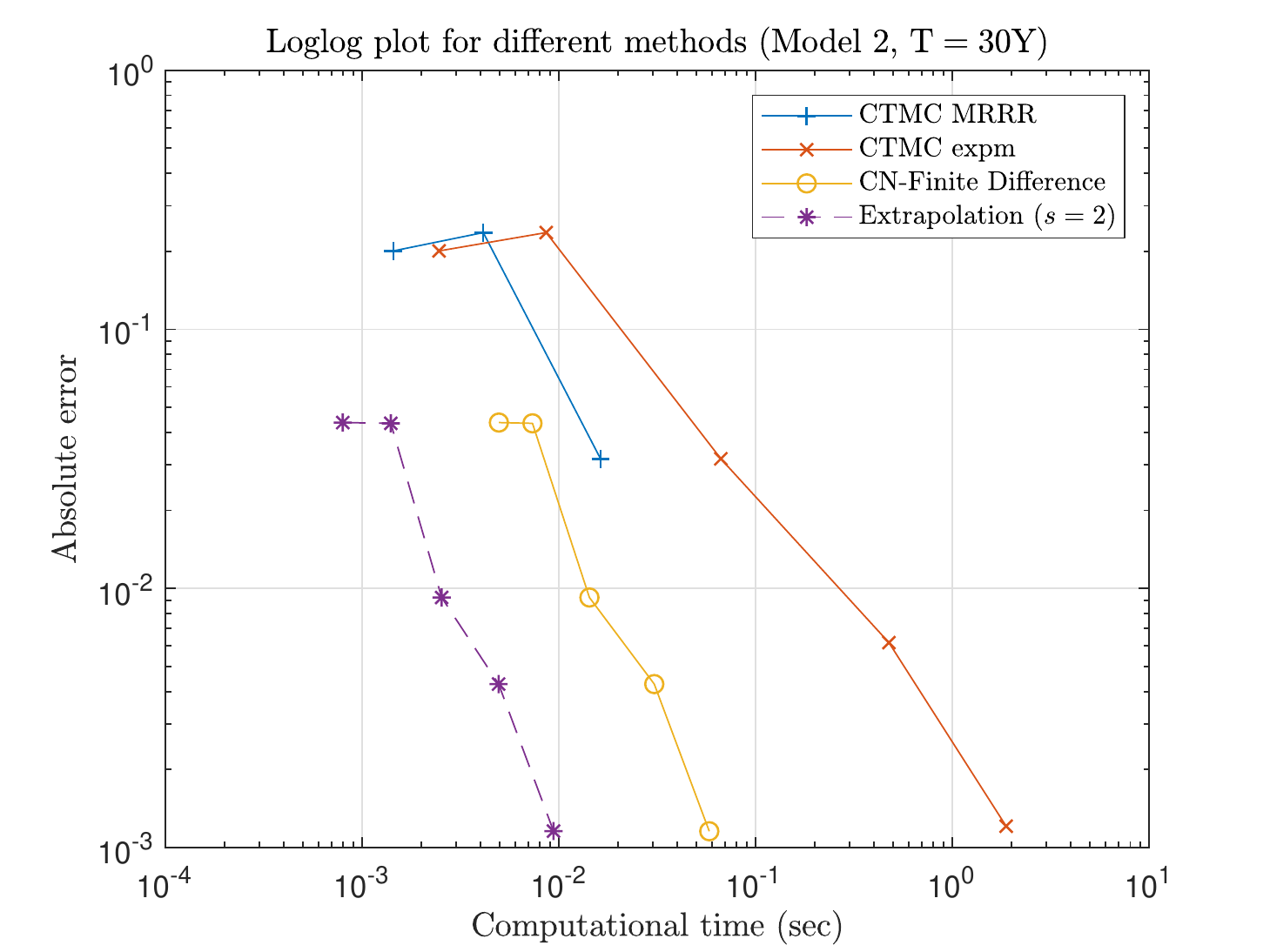}
		\end{subfigure}
		\par\bigskip
		\begin{subfigure}{.496\textwidth}
			\centering
			\includegraphics[width=\linewidth]{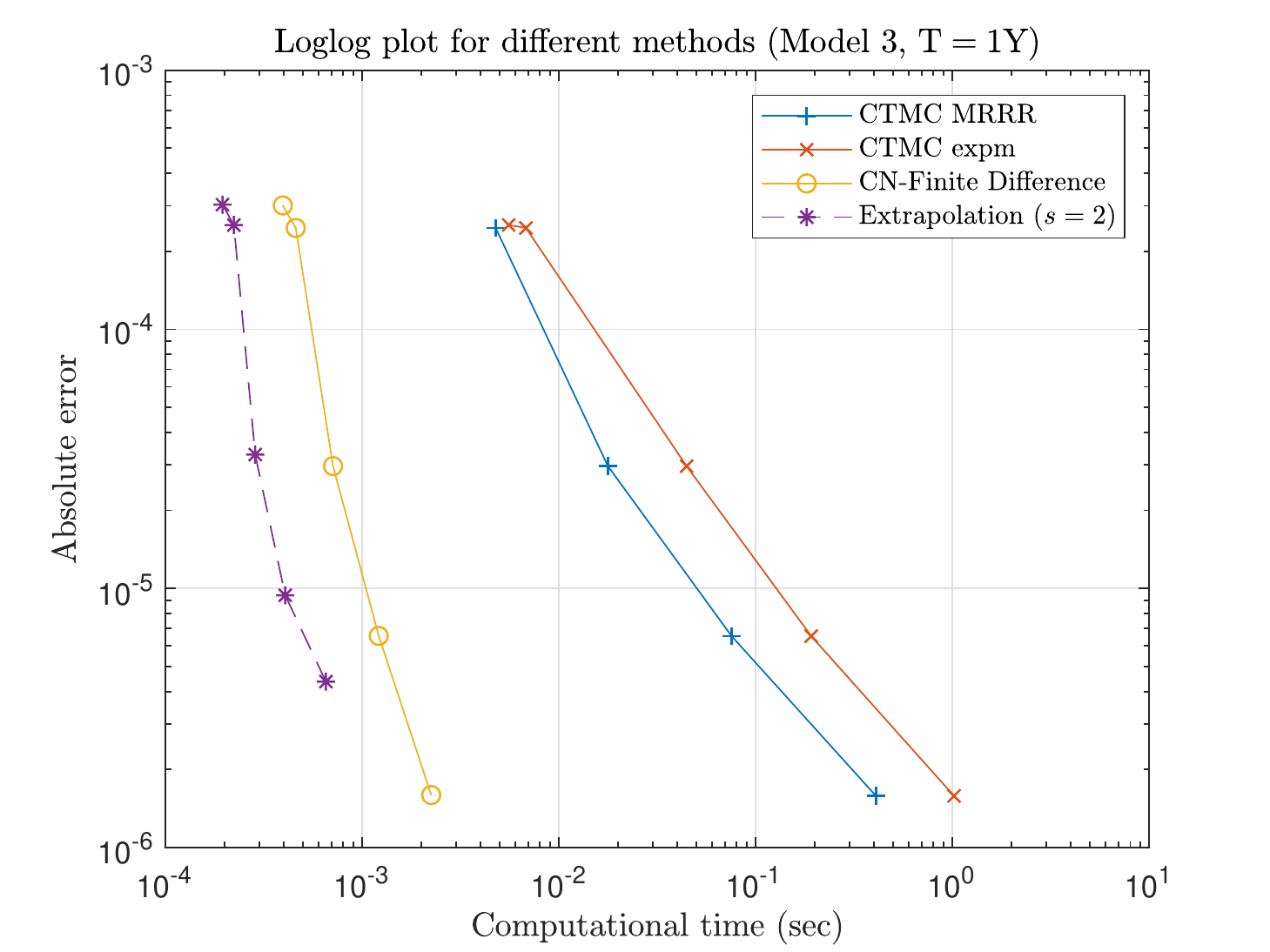}
		\end{subfigure}
		\begin{subfigure}{.496\textwidth}
			\centering
			\includegraphics[width=\linewidth]{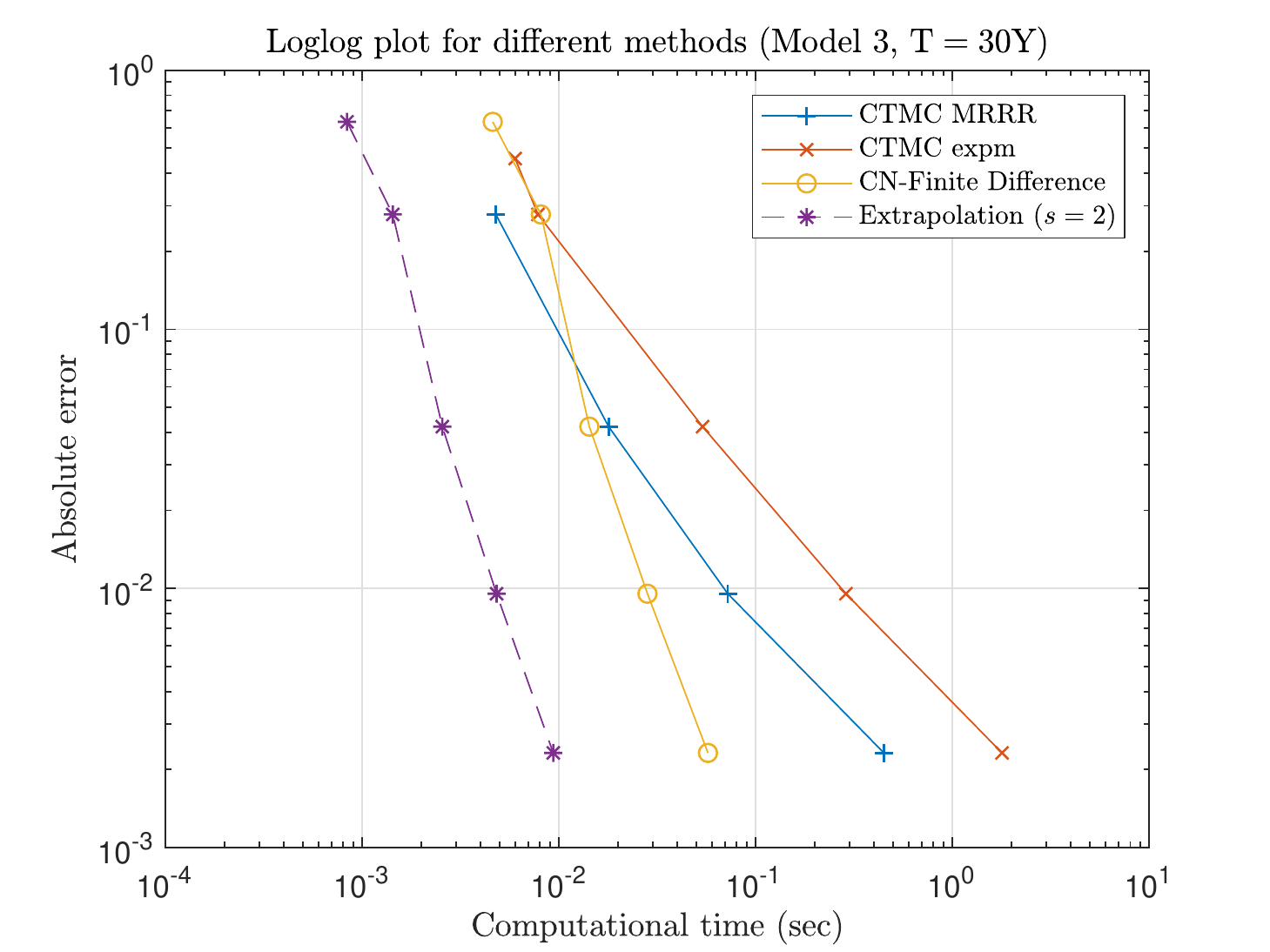}
		\end{subfigure}
		\caption{Comparison of four methods. ``CTMC expm'' stands for using the scaling and squaring algorithm for computing the matrix exponential in the CTMC method. In the extrapolation approach, extrapolation is only applied once (i.e., $s=2$).}
		\label{fig:accuracy_computational_time}
	\end{figure}
	
	The comparison clearly favors the extrapolation method which defeats the other three methods in all cases, and its leading edge becomes greater as the bond's maturity increases. It should be noted that the eigendecomposition method based on the MRRR algorithm does not always work. For Model 2, it cannot be applied when $n$ is too large and for Model 3, it fails for $n=100$ due to overflow/underflow errors in calculating a similar symmetric tridiagonal matrix required by the algorithm.

	\subsection{Simulation}
	We also consider how to do simulation for sticky diffusions. Our CTMC approximation method offers a natural alternative to the standard Euler scheme.
	The simulation of sample paths from a CTMC is straightforward and unlike the Euler scheme, no time discretization is needed.
	We use 500 grid points for the CTMC $Y$ constructed by Scheme 2, start with $Y_0=x$ and then draw an exponentially distributed random variable with intensity $\vert\mathbb{G}_{n,x,x}\vert$ to determine the amount of time spent in the initial state.
	After determining the time point when the Markov chain is transitioning, the transition rates $\mathbb{G}_{n,x,x^-}$, $\mathbb{G}_{n,x,x^+}$ and $k(x)$ are used to sample the new state.
	These steps are repeated until the maturity is reached. Using the definition of the order of weak convergence given in \cite{kloeden1999} and \cite{glasserman2004} and Theorem \ref{theorem:discretization_error_convergence}, the weak convergence order of the CTMC simulation scheme using the transition rate matrix under Scheme 2 is two (it is one if Scheme 1 is used).
	
	The Euler scheme is implemented in the following way.
	Time is discretized using 50 time steps per month. The process starts with $X_0=x$ and subsequent values of the process are computed using the discretization of the SDE \eqref{eq:general_diffusion_one_equation}.
	In particular, for $t=0,\Delta t,\ldots,T-\Delta t$,
	\begin{align*}
	Z_{t+\Delta t}&=\begin{cases} X_t+\mu\left(X_t\right)\Delta t+\sigma\left(X_t\right)\sqrt{\Delta t}\xi_{t+\Delta t}, & \qquad\textrm{if}\ X_t>0, \\ \rho\Delta t, & \qquad\textrm{if}\ X_t=0. \end{cases} \\
	e_{t+\Delta t}&\sim\textrm{Exp}\left(1\right),
	\end{align*}
	where $\xi_{t+\Delta t}$ is standard normally distributed and $e_{t+\Delta t}$ is exponentially distributed with mean 1.
	The new value $Z_{t+\Delta t}$ is accepted as given by
	\begin{equation*}
	X_{t+\Delta t}=\begin{cases} Z_{t+\Delta t} &\qquad\textrm{if}\ Z_{t+\Delta t}>0,\ e_{t+\Delta t}>k\left(X_t\right)\Delta t, \\ 0 &\qquad\textrm{if}\ Z_{t+\Delta t}\leq 0,\ e_{t+\Delta t}>k\left(X_t\right)\Delta t, \\ \partial &\qquad\textrm{if}\ e_{t+\Delta t}\leq k\left(X_t\right)\Delta t, \end{cases}
	\end{equation*}
	where $\partial$ is the cemetery state and $f(\partial)=0$.

	Figure \ref{fig:results_mc_model1} shows the Monte Carlo simulation results when sample paths are simulated by a CTMC and the Euler scheme.
	In both cases, 1000 samples paths are simulated and the price estimator together with the 99\% confidence intervals are displayed.
	\begin{figure}[htbp]
		\centering
		\begin{subfigure}{.496\textwidth}
			\centering
			\includegraphics[width=\linewidth]{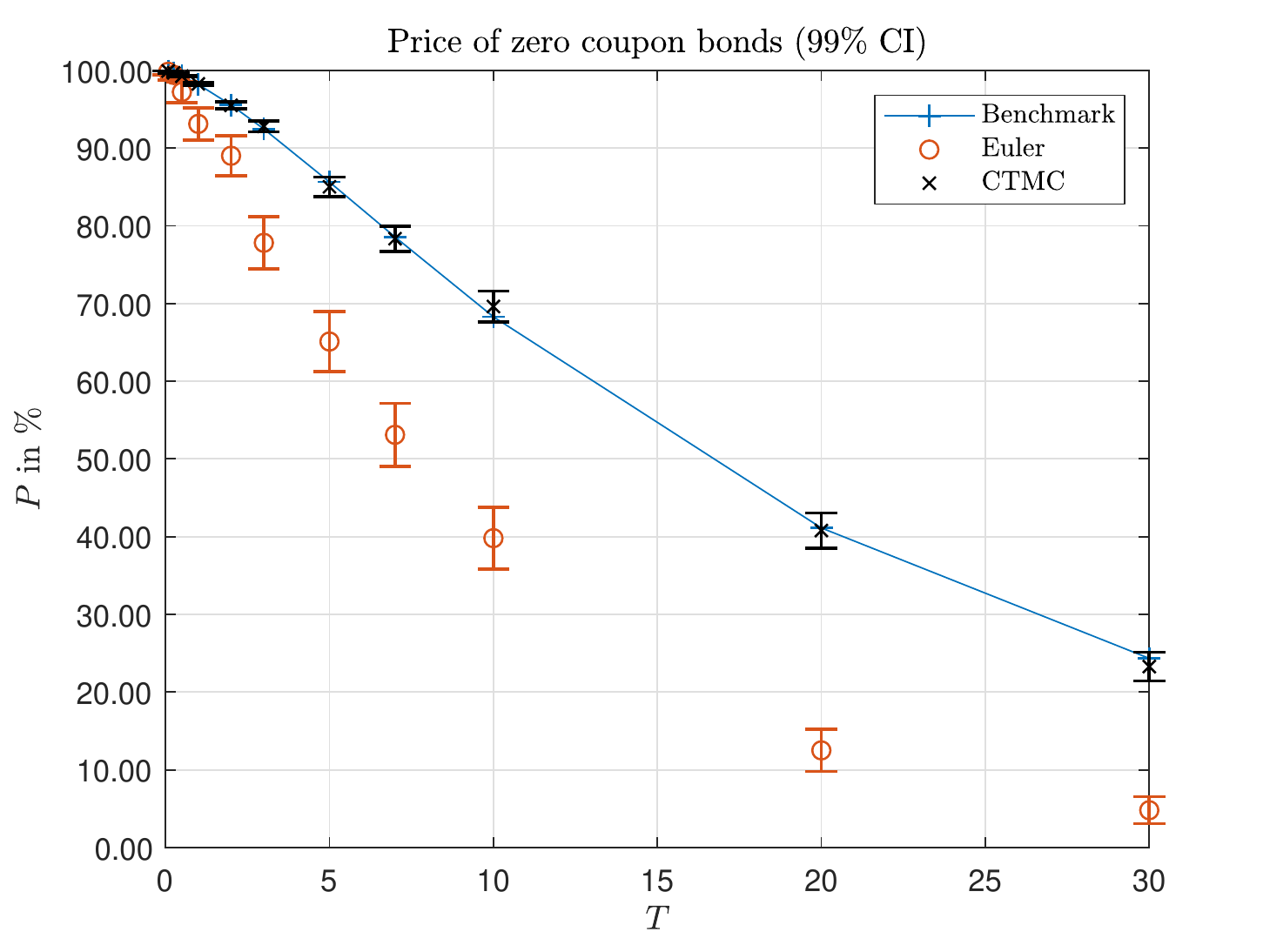}
		\end{subfigure}
		\begin{subfigure}{.496\textwidth}
			\centering
			\includegraphics[width=\linewidth]{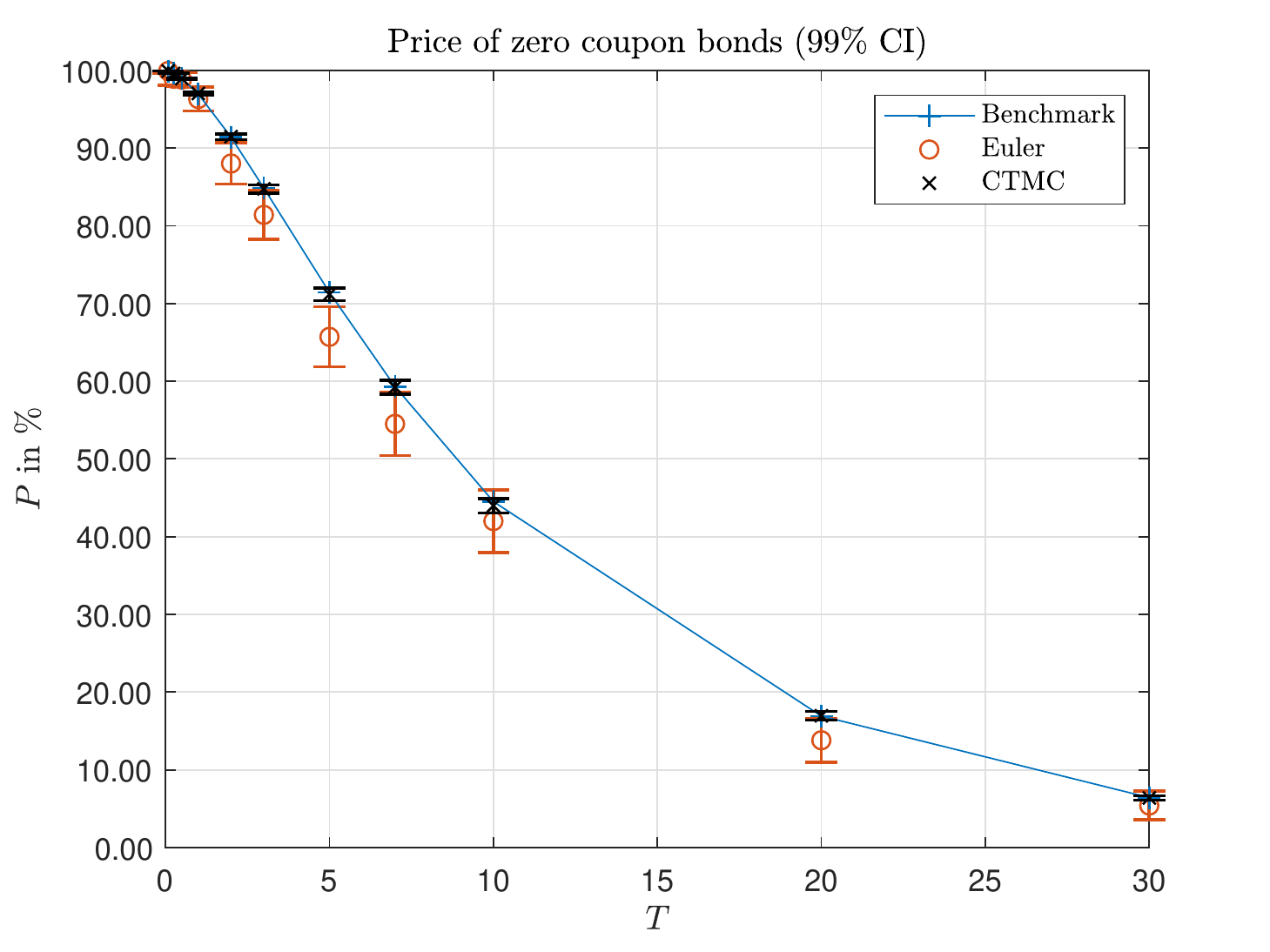}
		\end{subfigure}
		\caption{Monte Carlo simulation results for zero coupon bond pricing. The blue line indicates the benchmark prices. The left picture shows the result for Model 1 with $\rho=4.0\times 10^{-3}$ and in the right picture the results are obtained by setting $\rho=0.1$ with all other parameters remaining fixed.}
		\label{fig:results_mc_model1}
	\end{figure}
	The Euler scheme clearly fails when $\rho$ is small, i.e., the process is very sticky. However, it becomes acceptable when $\rho$ is big enough. In contrast, the CTMC simulation scheme works well regardless of the degree of stickiness.
	
	We provide some intuition about why the Euler scheme flops in very sticky cases. Note that this method only simulates a discrete time process. If at some time point on the grid, say $i\Delta t$, the process is at zero, it moves to $\rho \Delta t$ at $(i+1)\Delta t$. If $\Delta t$ is small enough, the interpolated path of the Euler scheme should resemble the path of the continuous-time process the Euler scheme converges to in the limit.
	Thus, it is intuitively clear that the limiting continuous-time process is instantaneously reflected at zero and hence not sticky there. When the original diffusion is only mildly sticky, it is not very different from the reflected case, so the Euler scheme produces acceptable results. However, if the original diffusion is very sticky, the difference from the reflected case is big and the results of the Euler scheme become useless. The following figure shows one path simulated from the CTMC scheme and the other from the Euler scheme. The CTMC simulation scheme does not discretize time and it can generate the phenomenon that the process sticks to zero whereas the Euler scheme cannot.
	\begin{figure}[htbp]
		\centering
		\begin{subfigure}{.496\textwidth}
			\centering
			\includegraphics[width=\linewidth]{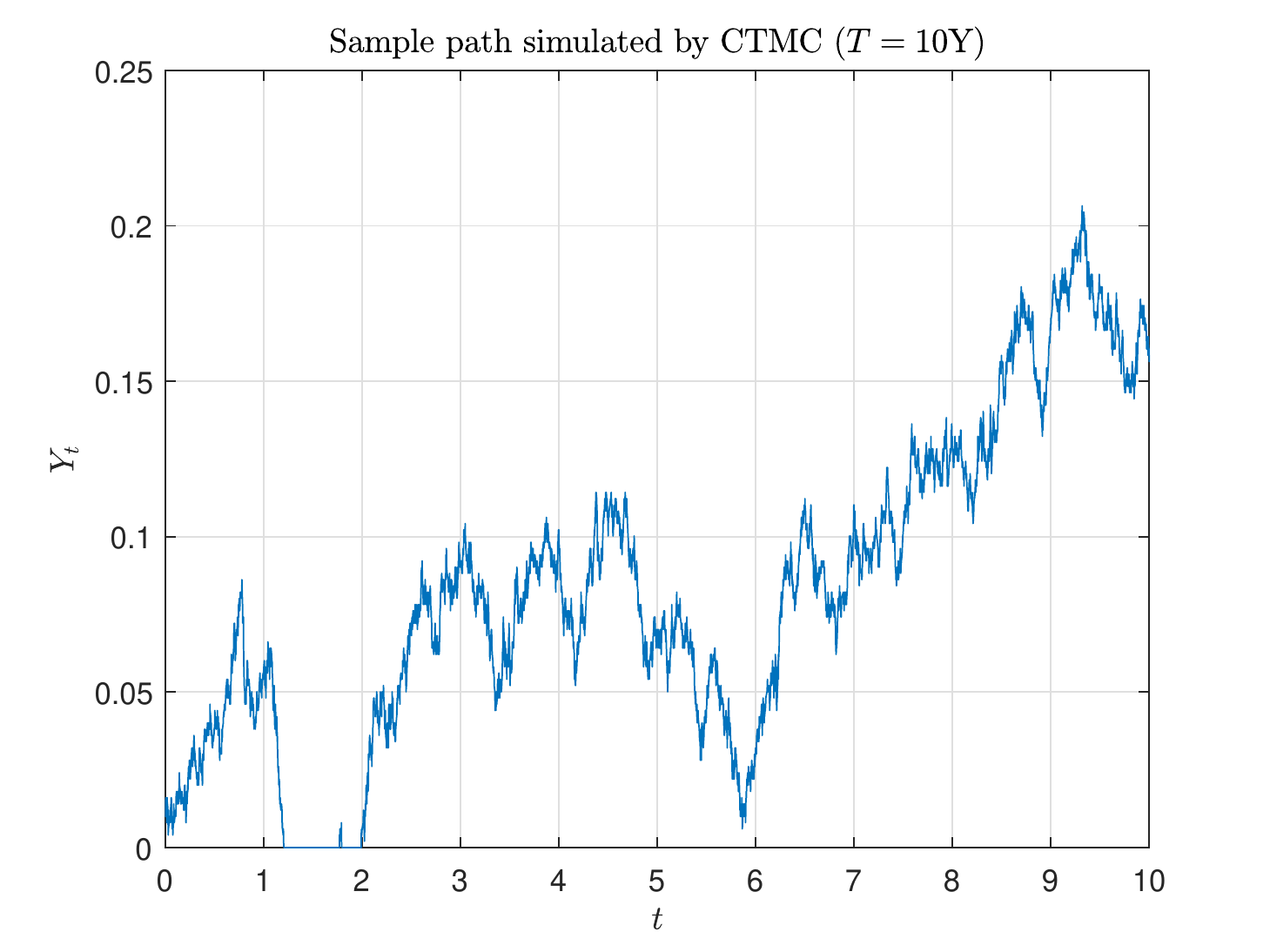}
		\end{subfigure}
		\begin{subfigure}{.496\textwidth}
			\centering
			\includegraphics[width=\linewidth]{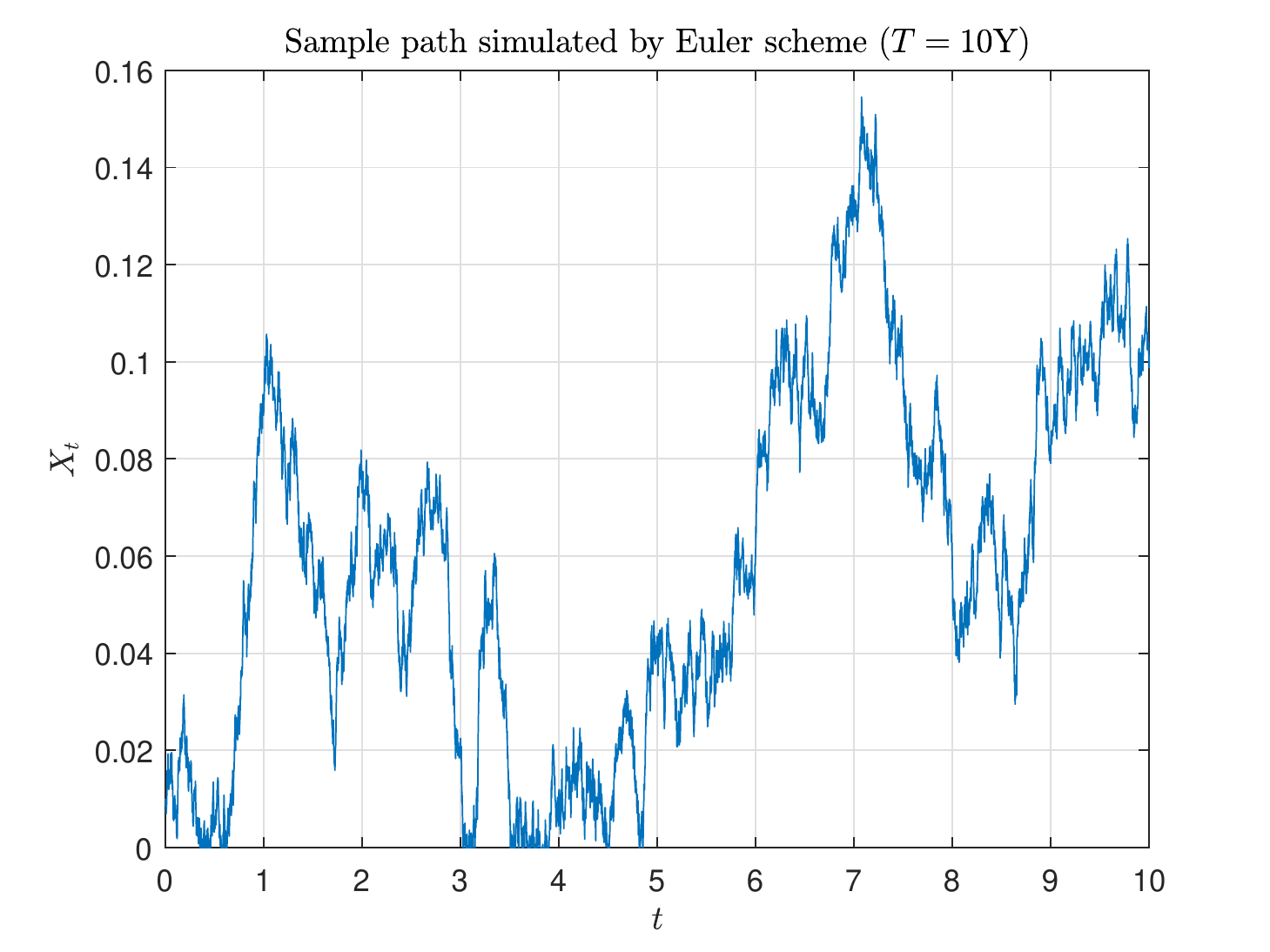}
		\end{subfigure}
		\caption{Sample paths over 10 years generated by the CTMC and Euler scheme. }
		\label{fig:sample_paths_model1}
	\end{figure}
	
	%%%%%%%%%%%%%%%%%%%%%%%%%%%%%%%%%%%%%%%%%%%%%%%%%%%%%%%%%%%%%%%%%%%%%%%%%%%%%%%%%%%%%%%%%%%%%%%%%%%%%%%
	\section{Conclusion}
	\label{sec:summary}
	This paper develops CTMC approximation of one-dimensional diffusions with a sticky lower boundary. A direct finite difference approximation of the generator of the sticky diffusion on the boundary leads to the first scheme that only converges at first order. Matching the first and second moments of the infinitesimal changes at the sticky boundary, we obtain the second scheme which is second order. Under the CTMC model, calculation of the action of the Feynman-Kac operator and first passage probabilities can be obtained in closed-form using matrix exponentials. Our method has several nice features. First, it is applicable to sticky diffusions with general drift, volatility and killing rate. Second, it is computationally efficient. We show that when the extrapolation method of \cite{feng2008a} is used to solve the ODE system for the matrix exponential, our method outperforms a standard finite difference scheme that is often used for solving diffusion PDEs. Third, the CTMC can be used to simulate the sticky diffusion and it produces good results whereas the Euler scheme completely fails when the diffusion is very sticky.
	
	It is possible to construct CTMC approximations for multidimensional sticky diffusions with acceptable computational efficiency in low dimensions. However, as the state space must be discretized, the method suffers from the curse of dimensionality. For high-dimensional problems, it is important to develop efficient simulation schemes. As CTMC approximation can be used to produce sticky behavior at the boundary, we expect that combining CTMC approximation with traditional simulation schemes will lead to computationally feasible and accurate algorithms and we plan to work in this direction in future research.
	
	%%%%%%%%%%%%%%%%%%%%%%%%%%%%%%%%%%%%%%%%%%%%%%%%%%%%%%%%%%%%%%%%%%%%%%%%%%%%%%%%%%%%%%%%%%%%%%%%%%%%%%%
	
	\section*{Acknowledgement}
	The research of the first two authors were supported by Hong Kong Research Grant Council General Research Fund Grant 14202117 and the research of the third author was supported by National Science Foundation of China Grant 11801423. We are grateful to Yutian Nie who kindly provided us with his code for implementing the eigenfunction expansion formula for bond pricing under the sticky OU short rate model.
	
	\appendix
	\section{Proofs}
	\label{app:proofs}
	
	\subsection{Proof of Theorem \ref{theorem:existence_and_uniqueness}}
	%\noindent\emph{Proof of Theorem \ref{theorem:existence_and_uniqueness}}:
	Suppose that the unique weak solution to \eqref{eq:general_diffusion_sde} and \eqref{eq:general_local_time_sde} for $\rho=\infty$ is given by $(X^1,B^1)$. Let
	\begin{equation*}
	\phi_t=t+\frac{1}{2\rho}L^l_t\left(X^1\right), \qquad T_t=\phi^{-1}_t, \qquad X_t=X^1_{T_t},\qquad B_t=B^1_{T_t}+\int_0^tI\left(X_s=l\right)dB^0_s,
	\end{equation*}
	where $B^0$ is a Brownian motion, which is defined on an extended probability space if needed, that is independent of $B^1$.
	The local time process is continuous and non-decreasing, hence $\phi_t$ is strictly increasing and continuous (see \cite{borodin2002}, Chapter II.13).
	This implies, that $T_t$ is also strictly increasing and continuous.
	Then $B_t$ is a continuous local martingale and
	\begin{equation*}
	\langle B\rangle_t=\langle B^1\rangle_{T_t}+\int_0^tI\left(X_s=l\right)ds=T_t+\int_0^tI\left(X_s=l\right)ds.
	\end{equation*}
	There also holds
	\begin{align*}
	T_t&=\int_0^{T_t}ds-\int_0^{T_t}I\left(X^1_s=0\right)ds=\int_0^{T_t}I\left(X^1_s>l\right)ds=\int_0^{T_t}I\left(X^1_s>l\right)\left(ds+\frac{1}{2\rho}dL^l_s\left(X^1\right)\right) \\
	&=\int_0^tI\left(X^1_{T_s}>l\right)d\phi_{T_s}=\int_0^tI\left(X_s>l\right)ds,
	\end{align*}
	where it was used that $dL_s^l(X)$ only increases for $X_s=l$ and the change of variable formula was applied.
	Therefor, $\langle B\rangle_t=t$ and by L\'evy's characterization, $B$ is a standard Brownian motion.
	Moreover,
	\begin{flalign*}
	X_t&=X_{T_t}^1=\int_0^{T_t}\mu\left(X_s^1\right)I\left(X_s^1>l\right)ds+\int_0^{T_t}\sigma\left(X_s^1\right)I\left(X_s^1>l\right)dB^1_s+\frac{1}{2}\int_0^{T_t}dL^l_s\left(X^1\right) \\
	&=\int_0^{T_t}\mu\left(X_s^1\right)I\left(X_s^1>l\right)\left(ds+\frac{1}{2\rho}dL^l_s\left(X^1\right)\right) \\
	&\qquad\qquad+\int_0^t\sigma\left(X_{T_s}^1\right)I\left(X_{T_s}^1>l\right)dB^1_{T_s}+\frac{1}{2}L^l_{T_t}\left(X^1\right)\\
	&=\int_0^{t}\mu\left(X_{T_s}^1\right)I\left(X_{T_s}^1>l\right)d\phi_{T_s}+\int_0^t\sigma\left(X_s\right)I\left(X_s>l\right)dB_s+\frac{1}{2}L^l_{T_t}\left(X^1\right) \\
	&=\int_0^{t}\mu\left(X_s\right)I\left(X_s>l\right)ds+\int_0^t\sigma\left(X_s\right)I\left(X_s>l\right)dB_s+\frac{1}{2}L^l_{t}\left(X\right),
	\end{flalign*}
	because $X_s=X^1_{T_s}$, $d\phi_{T_s}=ds+\frac{1}{2\rho}dL^l_s\left(X^1\right)=ds$ as $X^1$ is the unique weak solution of the reflecting case and $L_{T_t}^l(X^1)=L_t^l(X)$.
	This shows that $(X,B)$ solves \eqref{eq:general_diffusion_sde}.
	Furthermore,
	\begin{align*}
	\int_0^tI\left(X_s=l\right)ds&=\int_0^tI\left(X^1_{T_s}=l\right)d\phi_{T_s}=\int_0^{T_t}I\left(X^1_s=l\right)d\phi_s \\
	&=\int_0^{T_t}I\left(X^1_s=l\right)\left(ds+\frac{1}{2\rho}dL^l_s\left(X^1\right)\right)=\frac{1}{2\rho}L^l_t\left(X\right),
	\end{align*}
	where the first term vanishes because for $X^1$ there holds $I(X^1_s=l)ds=0$.
	The continuity of $X$ results from the continuity of $X^1$ and $T$.
	Hence, $(X,B)$ also solves \eqref{eq:general_local_time_sde}.
	
	The next step is to show the uniqueness in law of the solution $X$.
	We reset the notation and suppose that $(X,B)$ solves \eqref{eq:general_diffusion_sde} and \eqref{eq:general_local_time_sde}.
	Define
	\begin{equation*}
	T_t=\int_0^tI\left(X_s>l\right)ds
	\end{equation*}
	for $t\geq 0$.
	Then $T_t$ is continuous and strictly increasing almost surely.
	This can be shown by contradiction.
	Assume the $T_t$ is not strictly increasing, then there exists a set
	\begin{equation*}
	\Gamma=\left\{\omega\in\Omega:T_{t_1}=T_{t_2}\ \textrm{for some}\ 0<t_1<t_2\right\},
	\end{equation*}
	with $\mathbb{P}(\Gamma)>0$ and $t_1, t_2$ depending on $\omega$.
	Now $T_{t_1}=T_{t_2}$ implies that the process stays at the boundary for all $s\in[t_1,t_2]$ and so
	\begin{equation*}
	\Gamma\subset\left\{\omega:\int_{t_1}^{t_2}dL^l_s\left(X\right)=L^l_{t_2}\left(X\right)-L_{t_1}^l\left(X\right)>0\ \textrm{for some}\ 0<t_1<t_2\right\},
	\end{equation*}
	i.e. the local time increases between $t_1$ and $t_2$.
	On this set, there holds $I(X_s>l)=0$ for all $s\in[t_1,t_2]$ and hence
	\begin{equation*}
	\Gamma\subset\left\{\omega:X_{t_2}=X_{t_1}+L^l_{t_2}\left(X\right)-L^l_{t_1}\left(X\right)>X_{t_1}\ \textrm{for some}\ 0<t_1<t_2\right\},
	\end{equation*}
	as the drift and volatility vanish.
	This is a contradiction to $I(X_s>l)=0$ and so in summary, $T_t$ is strictly increasing almost surely.
	The inverse of $T_t$, given by
	\begin{equation*}
	\phi_t=\inf\left\{s\geq 0:T_s>t\right\},
	\end{equation*}
	is therefor also continuous and almost surely finite.
	As $X$ and $\phi$ are continuous, it follows that $X$ is constant on every interval $[\phi_{t-},\phi_t]$ and so $\phi$ is in synchronization with $X$ (see \cite{jacod1979}, Definition 10.13. Therein it is called adaptedness of $X$ to the time change $\phi$).
	
	Now set $X^1_t=X_{\phi_t}$.
	Then $X^1$ is a continuous semimartingale (see \cite{jacod1979}, Corollary 10.12 and Lemma 10.15) and using \eqref{eq:general_local_time_sde}, there holds
	\begin{equation*}
	t=T_{\phi_t}=\int_0^{\phi_t}I\left(X_s>l\right)ds=\phi_t-\int_0^{\phi_t}I\left(X_s=l\right)ds=\phi_t-\frac{1}{2\rho}L^l_{\phi_t}\left(X\right)=\phi_t-\frac{1}{2\rho}L^l_t\left(X^1\right)
	\end{equation*}
	and so
	\begin{equation*}
	\phi_t=t+\frac{1}{2\rho}L^l_t\left(X^1\right),
	\end{equation*}
	which shows that $\phi$ is also strictly increasing.
	Let $B^1_t=\int_0^{\phi_t}I(X_s>l)dB_s$.
	Then $B_t^1$ is a continuous local martingale with
	\begin{equation*}
	\langle B^1\rangle_t=\int_0^{\phi_t}I\left(X_s>l\right)ds=T_{\phi_t}=t
	\end{equation*}
	and hence $B^1$ is a Brownian motion by L\'evy's criterion.
	Furthermore, by \eqref{eq:general_diffusion_sde} there follows
	\begin{align*}
	X^1_t&=X_0+\int_0^{\phi_t}\mu\left(X_s\right)I\left(X_s>l\right)ds+\int_0^{\phi_t}\sigma\left(X_s\right)I\left(X_s>l\right)dB_s+\frac{1}{2}L^l_{\phi_t}\left(X\right) \\
	&=X_0+\int_0^{t}\mu\left(X^1_s\right)I\left(X^1_s>l\right)d\phi_s+\int_0^t\sigma\left(X^1_s\right)I\left(X^1_s>l\right)dB^1_s+\frac{1}{2}L^l_{t}\left(X^1\right),
	\end{align*}
	by the change of variables formula and further
	\begin{align*}
	dX^1_t&=\mu\left(X^1_t\right)I\left(X^1_t>l\right)d\phi_t+\sigma\left(X^1_t\right)I\left(X^1_t>l\right)dB^1_t+\frac{1}{2}dL^l_t\left(X^1\right) \\
	&=\mu\left(X^1_t\right)I\left(X^1_t>l\right)\left(dt+\frac{1}{2\rho}dL^l_t\left(X^1\right)\right)+\sigma\left(X^1_t\right)I\left(X^1_t>l\right)dB^1_t+\frac{1}{2}dL^l_t\left(X^1\right) \\
	&=\mu\left(X^1_t\right)I\left(X^1_t>l\right)dt+\sigma\left(X^1_t\right)I\left(X^1_t>l\right)dB^1_t+\frac{1}{2}dL^l_t\left(X^1\right).
	\end{align*}
	Moreover, we have
	\begin{equation*}
	\int_0^tI\left(X^1_s=l\right)ds=\int_0^tI\left(X_{\phi_s}=l\right)dT_{\phi_s}=\int_0^{\phi_t}I\left(X_s=l\right)dT_s=0.
	\end{equation*}
	The last two equations showed that $(X^1,B^1)$ is a unique weak solution to the system of SDEs \eqref{eq:general_diffusion_sde} and \eqref{eq:general_local_time_sde} for $\rho=\infty$.
	Since $X^1$ is the unique solution to the reflecting SDE and $X_t=X_{\phi_{T_t}}=X^1_{T_t}$, the law of $X$ is also unique.
	\cite{cherny2002}, Theorem 3.1 states that the uniqueness in law for $X$ implies joint uniqueness in law for $(X,B)$.\qed
	
	\subsection{Proof of Proposition \ref{prop:SLeigen}}
	%\bigskip
	%\noindent\emph{Proof of Proposition \ref{prop:SLeigen}}:
	Application of the Liouville transform $h_1(x)=\int_l^x1/\sigma\left(z\right)dz$ and a linear transformation $h_2(y)=-2y/B+1$, where $B=h_1(r)$, changes the problem into
	\begin{align}
	&-\frac{2}{B^2}\psi^{\prime\prime}\left(z\right)+q\left(z\right)\psi\left(z\right)=\lambda\psi\left(z\right), \qquad z\in\left(-1,1\right), \\
	&\psi\left(-1\right)=0, \qquad -\frac{2\rho}{Bh_3\left(0\right)}\psi^\prime\left(1\right)=\left(k\left(l\right)-\frac{h_4\left(0\right)}{\rho}-\lambda\right)\psi\left(1\right),
	\end{align}
	where
	\begin{align}
	\psi\left(z\right)&=\frac{\tilde{\varphi}\left(h_1^{-1}\left(h_2^{-1}\left(z\right)\right)\right)}{\sqrt{\sigma\left(h_1^{-1}\left(h_2^{-1}\left(z\right)\right)\right)s\left(h_1^{-1}\left(h_2^{-1}\left(z\right)\right)\right)}}, \label{eq:proof_lemma21_1}\\
	q\left(z\right)&=U\left(h_1^{-1}\left(h_2^{-1}\left(z\right)\right)\right)\qquad\ \textrm{and}\ U\ \textrm{is the potential function,} \\
	h_3\left(y\right)&=\frac{\left(h_1^{-1}\left(y\right)\right)^\prime}{\sqrt{\sigma\left(h_1^{-1}\left(y\right)\right)}}, \qquad\qquad h_4\left(y\right)=\frac{\left(\sigma\left(h_1^{-1}\left(y\right)\right)s\left(h_1^{-1}\left(y\right)\right)\right)^\prime}{2\sigma\left(h_1^{-1}\left(y\right)\right)s\left(h_1^{-1}\left(y\right)\right)},
	\end{align}
	and $\tilde{\varphi}$ and $\psi$ are the eigenfunctions of the original and transformed problem.
	It should be noted that $\tilde{\varphi}$ denotes the eigenfunction and $\varphi$ the normalized eigenfunction of the original problem.
	The potential function is defined in (3.31), \cite{linetsky2008}.
	
	This setting resembles the setting in \cite{altinisik2004}, with $a_1=a_2=2/B^2$ and $\gamma_i=\delta_i=1$ for $i=1,2$ allows that the transmission condition to disappear and the solution and its derivatives are continuous in $[-1,1]$.
	Following further the cited reference, let $\alpha_1=1$ and $\alpha_2=0$ and fix $\beta_1^\prime=-1$, $\beta_2^\prime=0$ and $\beta_2\neq 0$.
	It should be noted that because $\beta^\prime_2=0$, it is unimportant which value $\beta_1^\prime$ takes and normalization to 1 is a simplification of notation.
	
	The results in \cite{altinisik2004}, Section 4 then shows that the $k$-th eigenvalue of the problem in Liouville normal form satisfies the following asymptotic representation
	\begin{equation}
	\lambda_k=\frac{a_1^2k^2\pi^2}{4}-a_1\beta_2+\frac{1}{2a_1}\int_{-1}^1q\left(z\right)dz+O\left(\frac{1}{k}\right)\label{eq:SL_problem_asymptotics_lambda}
	\end{equation}
	and so $C_1k^2\leq\lambda_k\leq C_2k^2$ for constants $C_1,C_2>0$ independent of $k$. Using Theorem 3.1 in \cite{altinisik2004} with trigonometric calculations, we have for any $x\in[-1,1]$,
	\begin{equation} \psi_k\left(x\right)=-\frac{a_1}{\sqrt{\lambda_k}}\sin\frac{\sqrt{\lambda_k}\left(x+1\right)}{a_1}+\frac{1}{a_1\sqrt{\lambda_k}}\int_{-1}^x\sin\frac{\sqrt{\lambda_k}\left(x-z\right)}{a_1}q\left(z\right)\psi_k\left(z\right)dz.\label{eq:SL_eigenfunction}
	\end{equation}
	Then
	\begin{equation}
	\left\vert\psi_k\left(x\right)\right\vert\leq \frac{a_1}{\sqrt{\lambda_k}}+\frac{1}{a_1\sqrt{\lambda_k}}\int_{-1}^x\left\vert q\left(z\right)\right\vert \left\vert\psi_k\left(z\right)\right\vert dz\leq \frac{C_3}{\sqrt{\lambda_k}}+\frac{C_3}{\sqrt{\lambda_k}}\int_{-1}^x\left\vert q\left(z\right)\right\vert \left\vert\psi_k\left(z\right)\right\vert dz,
	\end{equation}
	for some constant $C_3>0$.
	The Gronwall inequality shows that
	\begin{equation}
	\left\vert\psi_k\left(x\right)\right\vert\leq \frac{C_3}{\sqrt{\lambda_k}}\exp\left(\int_0^B\frac{C_3\left\vert q\left(z\right)\right\vert}{\sqrt{\lambda_k}}dz\right)\leq\frac{C_4}{k},
	\end{equation}
	for a constant $C_4>0$ independent of $k$ and $x$ using the asymptotic representation of $\lambda_k$ in \eqref{eq:SL_problem_asymptotics_lambda}.
	
	Furthermore, there holds for the first derivative
	\begin{align}
	\left\vert\psi_k^\prime\left(x\right)\right\vert&\leq\left\vert-\cos\frac{\sqrt{\lambda_k}\left(x+1\right)}{a_1}+\frac{1}{a_1^2}\int_{-1}^x\cos\frac{\sqrt{\lambda_k}\left(x-z\right)}{a_1}q\left(z\right)\psi_k\left(z\right)dz\right\vert \\
	&\leq 1+\frac{C_5}{k}\int_{-1}^x\left\vert q\left(z\right)\right\vert dz\leq C_6,
	\end{align}
	where $C_5,C_6>0$ are constants independent of $k$ and $x$.
	
	Similar bounds can be established for further derivatives, i.e. for $j=2,3,4$, by differentiation of \eqref{eq:SL_eigenfunction}.
	Hence, one derives that $\vert\psi_k^{(j)}(x)\vert\leq C_{7}k^{j-1}$.
	Finally, by Theorem 4.2 in \cite{mukhtarov2004} there follows $\Vert\psi_k\Vert_2=\frac{C_{8}}{\pi k}+O(1/k^2)\geq C_{9}/k$.
	The normalized eigenfunctions then have the following asymptotic representation
	\begin{equation}
	\frac{\psi_k\left(x\right)}{\left\Vert\psi_k\right\Vert_2}\leq\frac{C_2/k}{C_{9}/k}\leq C_{10}.
	\end{equation}
	
	Using the relationship between $\tilde{\varphi}$ and $\psi$ in \eqref{eq:proof_lemma21_1} and the fact that $\sigma$ and $s$ are bounded on $\mathbb{S}$, one derives $\tilde{\varphi}_k(x)=O(1/k)$ and $\Vert\tilde{\varphi}_k\Vert_2\geq C_{11}/k$.
	This implies for all $x\in\mathbb{S}$, $\varphi_k(x)=\tilde{\varphi}_k(x)/\Vert\tilde{\varphi}_k\Vert=O(1)$. Using similar arguments, one can prove $\varphi_k^{(j)}(x)=O(k^j)$ for all $x\in\mathbb{S}$, $j=1,\ldots,4$.\qed
	
	\subsection{Proofs for Convergence Rate Analysis}
	We will restate some results from \cite{zhang2019} and \cite{zhang2019a} without proof but with adjustments to incorporate the sticky boundary behavior at the left boundary.
	
	\begin{lemma}
		For any $f,g:\mathbb{S}_n\to\mathbb{R}$ with $g(x_{n+1})=0$, we have
		\begin{equation}
		\sum_{x\in\mathbb{S}_n^\circ}g\left(x\right)\delta^-x\nabla^-f\left(x\right)=-\sum_{x\in\mathbb{S}_n^-}f\left(x\right)\delta^+x\nabla^+g\left(x\right)-g\left(x_0\right)f\left(x_0\right).
		\end{equation}
		\label{lemma:summation_by_parts}
	\end{lemma}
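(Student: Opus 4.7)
The identity is a standard summation-by-parts (discrete integration-by-parts) statement on the nonuniform grid, so the plan is largely bookkeeping once one recognizes the right telescoping structure.

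First I would strip the difference operators down to differences by using the definitions directly: for $x\in\mathbb{S}_n^\circ$, $\delta^-x\,\nabla^-f(x)=f(x)-f(x^-)$, and for $x\in\mathbb{S}_n^-$, $\delta^+x\,\nabla^+g(x)=g(x^+)-g(x)$. Writing $\mathbb{S}_n^\circ=\{x_1,\ldots,x_n\}$ and $\mathbb{S}_n^-=\{x_0,x_1,\ldots,x_n\}$, the left-hand side becomes
\begin{equation*}
\sum_{i=1}^n g(x_i)\bigl(f(x_i)-f(x_{i-1})\bigr)=\sum_{i=1}^n g(x_i)f(x_i)-\sum_{i=0}^{n-1}g(x_{i+1})f(x_i).
\end{equation*}

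Next I would expand the right-hand side the same way:
\begin{equation*}
-\sum_{i=0}^n f(x_i)\bigl(g(x_{i+1})-g(x_i)\bigr)-g(x_0)f(x_0)=\sum_{i=0}^n f(x_i)g(x_i)-\sum_{i=0}^n f(x_i)g(x_{i+1})-g(x_0)f(x_0).
\end{equation*}
The last sum on the right contains the term $f(x_n)g(x_{n+1})$, which vanishes by the hypothesis $g(x_{n+1})=0$, so it collapses to $\sum_{i=0}^{n-1}f(x_i)g(x_{i+1})$. The first sum on the right combines with $-g(x_0)f(x_0)$ to give $\sum_{i=1}^n f(x_i)g(x_i)$. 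Comparing the two simplified expressions finishes the proof.

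There is no real obstacle here; the only thing to watch is the role of $g(x_{n+1})=0$, which is exactly what allows the telescoping in the right-hand sum to leave no residual term at the upper boundary, mirroring the way an integration-by-parts on $[l,r]$ would use a Dirichlet condition at $r$. The boundary term $-g(x_0)f(x_0)$ is the discrete counterpart of the boundary contribution at $l$, where no such condition has been imposed.
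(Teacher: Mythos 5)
Your proof is correct and complete: rewriting $\delta^-x\,\nabla^-f(x)=f(x)-f(x^-)$ and $\delta^+x\,\nabla^+g(x)=g(x^+)-g(x)$, reindexing, and using $g(x_{n+1})=0$ to kill the upper boundary term is exactly the standard discrete integration-by-parts computation, and the boundary term $-g(x_0)f(x_0)$ falls out as you describe. The paper itself states this lemma without proof (it is restated from \cite{zhang2019} and \cite{zhang2019a} with the left-boundary adjustment), so there is no in-paper argument to compare against; your direct telescoping verification is the natural and expected one.
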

	
	Under Assumption \ref{assumption:rb}, it can be seen that
	\begin{equation}
	\underset{x,y\in\left(l,r\right)}{\sup}\ \left\vert\frac{\partial^i}{\partial x^i}\frac{\partial^j}{\partial y^j}p\left(t,x,y\right)\right\vert<\infty,\label{eq:result_lemma2_OR_paper}
	\end{equation}
	for $i,j=0,1,2$ still holds because of the results from Sturm-Liouville theory and the proof of Lemma 2 in \cite{zhang2019}.
	The use of \eqref{eq:result_lemma2_OR_paper} is to show claims of the form $\vert g(x)\vert\leq Ch_n^\beta$ for $\beta=0,1,2$ such that the constant $C>0$ is independent of $x$ and $n$.
	The application of this result will not be mentioned explicitly below.
	
	\bigskip
	\noindent\emph{Proof of Proposition \ref{prop:convergence_speed_measure_density}}:
	We will prove the claim first for $x_0$, then for $x_1$ and finally for all $x=x_2,\ldots,x_n$.
	First, note that
	\begin{align*}
	M_n\left(x_0\right)-M\left(x_0\right)&=M\left(x_0\right)\exp\left(\frac{\alpha}{\sigma^2\left(x_0\right)}\delta^+x_0\right)-M\left(x_0\right) \\
	&=M\left(x_0\right)\frac{\alpha}{\sigma^2\left(x_0\right)}\delta^+x_0+O\left(h_n^2\right).
	\end{align*}
	In a second step, applying the logarithm to \eqref{eq:approximation_speed_density_interior} for $x=x_1$ yields
	\begin{align}
	\log m_n\left(x_1\right)&=\log M_n\left(x_0\right)+\log\beta+\log\rho+\frac{\mu\left(x_1\right)}{\sigma^2\left(x_1\right)}\delta^+x_1+O\left(h_n^2\right) \\
	&\qquad+\log\frac{2}{\sigma^2\left(x_1\right)}+\left(\frac{\mu\left(x_0\right)}{\sigma^2\left(x_0\right)}+\frac{\mu\left(x_1\right)}{\sigma^2\left(x_1\right)}\right)\delta^+x_0-\left(\frac{\mu\left(x_0\right)}{\sigma^2\left(x_0\right)}+\frac{\mu\left(x_1\right)}{\sigma^2\left(x_1\right)}\right)\delta^+x_0 \\
	&=\log M\left(x_0\right)+\frac{\alpha}{\sigma^2\left(x_0\right)}\delta^+x_0+\log\beta-\log M\left(x_0\right)+O\left(h_n^2\right) \\
	&\qquad+\log m\left(x_1\right)+\frac{\mu\left(x_1\right)}{\sigma^2\left(x_1\right)}\left(\delta^+x_1-\delta^-x_1\right)-\frac{\mu\left(x_0\right)}{\sigma^2\left(x_0\right)}\delta^+x_0 \\
	&=\log m\left(x_1\right)+\frac{\mu\left(x_1\right)}{\sigma^2\left(x_1\right)}\left(\delta^+x_1-\delta^-x_1\right)+O\left(h_n^2\right),
	\end{align}
	where we used that
	\begin{equation}
	\log m\left(x_1\right)=\log\frac{2}{\sigma\left(x_1\right)}+\int_{x_0}^{x_1}\frac{2\mu\left(y\right)}{\sigma^2\left(y\right)}dy=\log\frac{2}{\sigma\left(x_1\right)}+\left(\frac{\mu\left(x_0\right)}{\sigma^2\left(x_0\right)}+\frac{\mu\left(x_1\right)}{\sigma^2\left(x_1\right)}\right)\delta^+x_0+O\left(h_n^2\right)
	\end{equation}
	and
	\begin{align}
	&\frac{\alpha}{\sigma^2\left(x_0\right)}\delta^+x_0+\log\beta-\frac{\mu\left(x_0\right)}{\sigma^2\left(x_0\right)}\delta^+x_0 \\
	&\qquad=\begin{cases} 0 &\qquad\textrm{for Scheme 1,} \\ \frac{\rho}{\sigma^2\left(x_0\right)}\delta^+x_0-\frac{\rho-\mu\left(x_0\right)}{\sigma^2\left(x_0\right)}\delta^+x_0-\frac{\mu\left(x_0\right)}{\sigma^2\left(x_0\right)}\delta^+x_0+O\left(h_n^2\right) & \qquad\textrm{for Scheme 2}. \end{cases}\label{eq:prop_41_eq_1}
	\end{align}
	Thus these terms equal $0$ for Scheme 1 and are $O(h_n^2)$ for Scheme 2.
	
	Now let $x\in\{x_2,\ldots,x_n\}$, then applying the logarithm to \eqref{eq:approximation_speed_density_interior} and using the Taylor expansion for the logarithm yields,
	\begin{flalign}
	\log m_n\left(x\right)&=\log M_n\left(x_0\right)+\log\beta+\log \rho+\frac{\mu\left(x\right)}{\sigma^2\left(x\right)}\left(\delta^+x-\delta^-x\right)+\frac{\mu\left(x_1\right)}{\sigma^2\left(x_1\right)}\delta^-x_1 \\
	&\qquad+\log\frac{2}{\sigma^2\left(x\right)}+\sum_{y=x_1}^{x^-}\left(\frac{\mu\left(y\right)}{\sigma^2\left(y\right)}+\frac{\mu\left(y^+\right)}{\sigma^2\left(y^+\right)}\right)\delta y+\left(\frac{\mu\left(x_0\right)}{\sigma^2\left(x_0\right)}+\frac{\mu\left(x_1\right)}{\sigma^2\left(x_1\right)}\right)\delta^+x_0 \\	
	&\qquad-\left(\frac{\mu\left(x_0\right)}{\sigma^2\left(x_0\right)}+\frac{\mu\left(x_1\right)}{\sigma^2\left(x_1\right)}\right)\delta^+x_0+O\left(h_n^2\right) \\
	&=\log M_n\left(x_0\right)+\log\beta+\log \rho+\frac{\mu\left(x\right)}{\sigma^2\left(x\right)}\left(\delta^+x-\delta^-x\right) \\
	&\qquad+\log m\left(x\right)-\frac{\mu\left(x_0\right)}{\sigma^2\left(x_0\right)}\delta^+x_0+O\left(h_n^2\right),
	\end{flalign}
	where we used
	\begin{equation}
	\sum_{y=x_1}^{x^-}\left(\frac{\mu\left(y\right)}{\sigma^2\left(y\right)}+\frac{\mu\left(y^+\right)}{\sigma^2\left(y^+\right)}\right)\delta y+\left(\frac{\mu\left(x_0\right)}{\sigma^2\left(x_0\right)}+\frac{\mu\left(x_1\right)}{\sigma^2\left(x_1\right)}\right)\delta^+x_0=\int_{x_0}^x\frac{2\mu\left(y\right)}{\sigma^2\left(y\right)}dy+O\left(h_n^2\right)
	\end{equation}
	and
	\begin{equation}
	\log m\left(x\right)=\log\frac{2}{\sigma^2\left(x\right)}+\int_{x_0}^x\frac{2\mu\left(y\right)}{\sigma^2\left(y\right)}dy.
	\end{equation}
	Further using $M\left(x_0\right)=\frac{1}{\rho}$ and \eqref{eq:approximation_speed_measure_x0} yields
	\begin{equation}\label{eq:proposition_41_eq1}
	\log m_n\left(x\right)=\frac{\alpha}{\sigma^2\left(x_0\right)}\delta^+x_0+\log\beta-\frac{\mu\left(x_0\right)}{\sigma^2\left(x_0\right)}\delta^+x_0+\frac{\mu\left(x\right)}{\sigma^2\left(x\right)}\left(\delta^+x-\delta^-x\right)+\log m\left(x\right)+O\left(h_n^2\right).
	\end{equation}
	Therefore, by using \eqref{eq:prop_41_eq_1}, we obtain from the previous result
	\begin{equation*}
	m_n\left(x\right)-m\left(x\right)=m\left(x\right)\left(\frac{\mu\left(x\right)}{\sigma^2\left(x\right)}\left(\delta^+x-\delta^-x\right)\right)+O\left(h_n^2\right).
	\end{equation*}
	Hence, \eqref{eq:convergence_speed_density_interior} holds for all $x\in\mathbb{S}_n^\circ$ and we proved the convergence for all $x\in\mathbb{S}_n^-$.
	The error estimate for the scale function can be obtained from similar calculations and it is omitted here.\qed
	
	\bigskip
	Proposition \ref{prop:convergence_speed_measure_density} implies the following.
	\begin{corollary}\label{cor:lower_upper_bounds_sn_mn}
		Under Assumption \ref{assumption:rb}, for $n$ sufficiently large, there exist constants $C_1,C_2>0$ independent of $n$ and $x\in\mathbb{S}_n,y\in\mathbb{S}_n^\circ$, such that
		\begin{equation}
		C_1\leq s_n\left(x\right)\leq C_2, \quad C_1\leq m_n\left(y\right)\leq C_2, \quad C_1\leq M_n\left(x_0\right)\leq C_2.
		\end{equation}
	\end{corollary}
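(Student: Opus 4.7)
\bigskip
\noindent\emph{Proof proposal for Corollary \ref{cor:lower_upper_bounds_sn_mn}}: The plan is to first establish positive two-sided bounds for the continuous counterparts $s(x)$, $m(x)$, $M(x_0)$ on the full interval $\mathbb{S}$, and then transfer these bounds to $s_n$, $m_n$, $M_n$ via the uniform error estimates of Proposition \ref{prop:convergence_speed_measure_density}. Since $\mathbb{S}=[l,r]$ is compact under Assumption \ref{assumption:rb}, and $\mu\in C^3(\mathbb{S})$, $\sigma^2\in C^4(\mathbb{S})$ with $\sigma^2>0$, the functions $2\mu/\sigma^2$ and $1/\sigma^2$ are continuous and bounded, which gives positive constants $c_1,c_2>0$ such that $c_1\leq s(x)\leq c_2$ and $c_1\leq m(x)\leq c_2$ for all $x\in\mathbb{S}$. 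The bound $c_1\leq M(x_0)=1/\rho\leq c_2$ is immediate from $\rho\in(0,\infty)$.

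Next, I would apply the three estimates of Proposition \ref{prop:convergence_speed_measure_density}. For $M_n(x_0)$, equation \eqref{eq:convergence_speed_measure_x0} gives $M_n(x_0)=M(x_0)+O(h_n)$, so for $n$ large enough the deviation is at most $M(x_0)/2$ and the bounds on $M(x_0)$ transfer with slightly modified constants. For $m_n$, equation \eqref{eq:convergence_speed_density_interior} together with $|\delta^+x-\delta^-x|\leq 2h_n$ and the uniform boundedness of $\mu/\sigma^2$ shows $|m_n(x)-m(x)|\leq C h_n$ uniformly in $x\in\mathbb{S}_n^\circ$, which for $n$ large enough preserves positive two-sided bounds on $m_n$. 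For $s_n$, equation \eqref{eq:convergence_scale_function} directly gives $|1/s_n(x)-1/s(x)|=O(h_n)$ uniformly in $x\in\mathbb{S}_n^-$; since $1/s$ is bounded away from $0$ and $\infty$, the same holds for $1/s_n$, and hence for $s_n$.

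Finally, I would choose $n$ large enough that all three perturbations are, say, less than half of the corresponding lower bound on the true quantity, and take $C_1$ to be half the minimum of the lower bounds on $s$, $m$, $M(x_0)$ and $C_2$ to be twice the maximum of the upper bounds. This yields a single pair of constants $C_1,C_2>0$ that works simultaneously for $s_n(x)$ on $\mathbb{S}_n$, $m_n(y)$ on $\mathbb{S}_n^\circ$, and $M_n(x_0)$. There is no real obstacle here: the entire argument is a routine translation of the uniform error estimates of Proposition \ref{prop:convergence_speed_measure_density} into uniform bounds, and the only item requiring a moment of care is ensuring that the implicit constants in the $O(h_n)$ terms are genuinely independent of $x$ and $n$, which follows from the continuity and boundedness of $\mu$, $\sigma^2$, and their derivatives on the compact interval $\mathbb{S}$. \qed
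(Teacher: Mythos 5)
Your proposal is correct and is exactly the argument the paper intends: the corollary is stated as an immediate consequence of Proposition \ref{prop:convergence_speed_measure_density}, obtained by bounding the continuous quantities $s$, $m$, $M(x_0)$ away from $0$ and $\infty$ on the compact state space and absorbing the uniform $O(h_n)$ perturbations for $n$ large. No meaningful difference from the paper's (implicit) proof.
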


	\begin{lemma}\label{lemma:square_k_lower_upper_bounds}
		Under Assumptions \ref{assumption:rb} and \ref{assumption:bounded_grids}, there exists a constant $C>0$, independent of $k$ and $n$, such that for $h_n\in(0,\delta)$, where $\delta$ is small enough, the following holds
		\begin{equation}
		\lambda_k^n\leq Ck^2.\label{eq:upper_lower_bound_approximate_eigenvalues}
		\end{equation}
	\end{lemma}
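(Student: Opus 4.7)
\noindent\textit{Proof plan.} The plan is to combine the min-max (Courant-Fischer) principle with a comparison of $-G_n$ to a standard symmetric discrete Laplacian on $\mathbb{S}_n^-$, whose $k$-th eigenvalue is well known to be $O(k^2)$ uniformly in $n$.

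First I would verify that $-G_n$, restricted to functions on $\mathbb{S}_n^-$ with the understanding that the value at $x_{n+1}$ is set to zero, is self-adjoint and non-negative with respect to the inner product $(\cdot,\cdot)_n$. This is a direct check: the matrix $\mathbb{G}_n$ symmetrizes after conjugation by $\mathrm{diag}(\sqrt{M_n(x)})$, using the identities for $\mathbb{G}_{n,i,i-1}$ and $\mathbb{G}_{n,i,i+1}$ together with the recursion defining $m_n$ in \eqref{eq:approximation_speed_density_interior} and the matching condition $\frac{1}{s_n(x_0)}=\rho M_n(x_0)$ at the boundary. Once this is established, the Courant-Fischer characterization reads
\[
\lambda_k^n=\min_{\substack{V\subset\mathbb{R}^{\mathbb{S}_n^-}\\\dim V=k}}\ \max_{g\in V\setminus\{0\}}\frac{(-G_n g,g)_n}{(g,g)_n},\qquad k=1,\ldots,n+1.
\]

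Next I would derive the Dirichlet form. Using the divergence representation of $G_n$ on $\mathbb{S}_n^\circ$, the summation-by-parts identity (Lemma \ref{lemma:summation_by_parts}) with $g(x_{n+1})=0$, and the boundary formula \eqref{eq:discretized_generator_boundary}, one obtains
\[
(-G_n g,g)_n=\sum_{x\in\mathbb{S}_n^-}\frac{(\nabla^+g(x))^2}{s_n(x)}\delta^+x+\sum_{x\in\mathbb{S}_n^-}k(x)g(x)^2M_n(x)+R(g),
\]
where the residual $R(g)$ stems from the mismatch between $\rho\beta M_n(x_0)$ and $\tfrac{1}{s_n(x_0)}$; it vanishes identically for Scheme 1 and is bounded by $Ch_n\,|g(x_0)|\,|\nabla^+g(x_0)|$ for Scheme 2. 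By Corollary \ref{cor:lower_upper_bounds_sn_mn} the quantities $1/s_n$, $m_n$, $M_n(x_0)$ lie between two positive constants independent of $n$, and $k$ is uniformly bounded on the compact $\mathbb{S}$; absorbing $R(g)$ with Young's inequality (using Assumption \ref{assumption:bounded_grids} to ensure $\delta^+x_0\geq c\,h_n$) therefore yields
\[
(-G_n g,g)_n\leq C_1\underbrace{\sum_{x\in\mathbb{S}_n^-}(\nabla^+g(x))^2\delta^+x}_{=:\mathcal{E}_n(g)}+C_1\underbrace{\sum_{x\in\mathbb{S}_n^-}g(x)^2\delta x}_{=:\mathcal{M}_n(g)},\qquad (g,g)_n\geq c_1\mathcal{M}_n(g),
\]
with $c_1, C_1>0$ independent of $n$ and $g$.

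The final ingredient is the eigenvalue bound for the comparison quadratic form $(\mathcal{E}_n,\mathcal{M}_n)$, which corresponds to a symmetric tridiagonal Neumann-Dirichlet discretization of $-\partial_x^2$ on $\mathbb{S}_n^-$. By using the trial subspace spanned by $\cos\bigl((2j-1)\pi(x-x_0)/(2(x_{n+1}-x_0))\bigr)$, $j=1,\ldots,k$, restricted to $\mathbb{S}_n^-$, and evaluating the discrete Rayleigh quotient via Riemann-sum approximation (valid uniformly thanks to Assumption \ref{assumption:bounded_grids}), one obtains $\hat\lambda_k\leq C_2k^2$ for all $1\leq k\leq n+1$, where $\hat\lambda_k$ is the $k$-th eigenvalue of $(\mathcal{E}_n,\mathcal{M}_n)$. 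Inserting the optimal subspace $V_k^\ast$ for $\hat\lambda_k$ into the min-max characterization of $\lambda_k^n$ gives
\[
\lambda_k^n\leq\max_{g\in V_k^\ast\setminus\{0\}}\frac{C_1\mathcal{E}_n(g)+C_1\mathcal{M}_n(g)}{c_1\mathcal{M}_n(g)}\leq\frac{C_1}{c_1}\hat\lambda_k+\frac{C_1}{c_1}\leq Ck^2,
\]
for $h_n$ less than some $\delta>0$, which is the desired bound.

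The main obstacle is bookkeeping around the non-standard Robin-type condition at the sticky boundary $x_0$, specifically handling the residual $R(g)$ so that it can be absorbed into $\mathcal{E}_n(g)+\mathcal{M}_n(g)$ with constants independent of $n$; this is where Assumption \ref{assumption:bounded_grids} is essential, since otherwise $h_n/\delta^+x_0$ need not be bounded. Verifying the uniform bound $\hat\lambda_k\leq C_2k^2$ on non-uniform grids also requires care, but it reduces to a Riemann-sum estimate on smooth cosines that succeeds for every $k\leq n+1$ under the bounded aspect ratio hypothesis.
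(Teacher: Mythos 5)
Your overall architecture coincides with the paper's: symmetrize $\mathbb{G}_n$ by conjugation with $\mathrm{diag}(\sqrt{M_n})$, invoke the min--max principle, compute the Dirichlet form via the summation-by-parts identity of Lemma \ref{lemma:summation_by_parts}, and absorb the boundary residual coming from $1-\beta$ (zero for Scheme 1, $O(h_n)$ for Scheme 2) by a Young-type splitting --- this is exactly what the paper does, and that part of your plan is sound. Where you diverge is the last step: the paper bounds $\sum_{x}\delta^+x\,(\nabla^+f(x))^2/s_n(x)$ by $\tfrac{C}{h_n}f_n^TAf_n$ with $A$ the \emph{unweighted} Neumann--Dirichlet second-difference matrix, and then uses the exact eigenvalues $\lambda_k(A)=4\sin^2\frac{(2k-1)\pi}{4n+6}\leq Ck^2/(n+1)^2$ (Yueh) together with $h_n(n+1)\geq C$ from Assumption \ref{assumption:bounded_grids}. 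You instead compare to the generalized eigenproblem $(\mathcal{E}_n,\mathcal{M}_n)$ on the non-uniform grid and propose to prove $\hat\lambda_k\leq C_2k^2$ by restricting continuum cosines and invoking Riemann-sum accuracy.

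That final step has a genuine gap for large $k$. The lemma is needed for the full range $1\leq k\leq n$ (it feeds the bound $\Vert\varphi_k^n\Vert_{n,\infty}\leq Ck$ for all $k\leq n$ and the tail sum over $h_n^{-1/5}<k\leq n$ in Theorem \ref{theorem:convergence_transition_probability}), but when $k$ is of order $n$ the trial cosines oscillate on the scale of the mesh: the relative error of the Riemann sum for $\int g^2$ is of order $kh_n=O(1)$, so you cannot conclude $\mathcal{M}_n(g)\geq c\int g^2$, and on a non-uniform grid you cannot even guarantee (by this argument) that the $k$ restricted cosines remain linearly independent or that the subspace does not nearly alias to zero at the nodes. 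So ``reduces to a Riemann-sum estimate \ldots for every $k\leq n+1$'' is not justified as stated. The gap is fixable: either handle $k\geq\epsilon n$ separately by the crude bound $\lambda_k^n\leq 2\max_i\vert\mathbb{G}_{n,i,i}\vert\leq Ch_n^{-2}\leq C'k^2$ (using Assumption \ref{assumption:bounded_grids}) and reserve the cosine/Riemann-sum argument for $k\leq\epsilon n$, or follow the paper and pass to the unweighted matrix $A$ whose spectrum is known in closed form, which covers all $k$ at once. One further small caution: in your intermediate inequality, the absorption of the residual term $h_n g(x_0)^2$ into $\mathcal{M}_n(g)$ implicitly uses $\delta x_0\geq ch_n$, i.e.\ Assumption \ref{assumption:bounded_grids}; make that explicit, since $(g,g)_n$ controls $g(x_0)^2$ with weight $M_n(x_0)=O(1)$ while $\mathcal{M}_n(g)$ only controls it with weight $\delta x_0$.
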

	
	\begin{proof}
		Let the matrix $\mathbb{M}_n$ be a diagonal matrix with entries $\mathbb{M}_{n,i,i}=M_n(x_i)$ for $i=0,\ldots,n$.
		Calculation of $\mathbb{M}_n\mathbb{G}_n$ and the choice of $M_n(x)$ as stated in Section \ref{subsec:eigenfunction_expansion_ctmc} implies that
		\begin{align*}
		\frac{\rho}{\delta^+x_0}\beta M_n\left(x_0\right)&=\frac{-\mu\left(x_1\right)\delta^+x_1+\sigma^2\left(x_1\right)}{2\delta^-x_1\delta x_1}M_n\left(x_1\right) \\
		\frac{\mu\left(x\right)\delta^-x+\sigma^2\left(x\right)}{2\delta^+x\delta x}M_n\left(x\right)&=\frac{-\mu\left(x^+\right)\delta^+x^++\sigma^2\left(x^+\right)}{2\delta^-x^+\delta x^+}M_n\left(x^+\right),\qquad\textrm{for}\  x=x_1,\ldots,x_{n-1}.
		\end{align*}
		Hence, $\mathbb{M}_n\mathbb{G}_n$ is symmetric and therefor $\mathbb{M}_n^{1/2}\mathbb{G}_n\mathbb{M}_n^{-1/2}$ is also symmetric.
		Furthermore, it can be seen that $\mathbb{M}_n^{1/2}\mathbb{G}_n\mathbb{M}_n^{-1/2}$ is similar to $\mathbb{G}_n$ and hence both matrices have the same eigenvalues.
		The min-max principle derived in \cite{zhang2019a}, Section 3, shows that
		\begin{equation}
		\lambda_k^n=\underset{U_k}{\min}\ \underset{f\in U_k}{\max}\ \frac{-f^T\mathbb{M}_n^{1/2}\mathbb{G}_n\mathbb{M}_n^{-1/2}f}{f^Tf}=\underset{U_k}{\min}\ \underset{f\in U_k}{\max}\ \frac{\left(f,-G_nf\right)_n}{\left(f,f\right)_n},\label{eq:min_max_principle_eigenvalue}
		\end{equation}
		where $U_k$ denotes a $k$-dimensional subspace of functions defined on $\mathbb{S}_n$ with boundary condition $f(x_{n+1})=0$.
		
		The upper boundary for $\lambda_k^n$ can be derived in the following way.
		Firstly, there holds
		\begin{align*}
		\left(f,-G_nf\right)_n&=-\sum\limits_{x\in\mathbb{S}_n^-}f\left(x\right)\left(G_nf\left(x\right)\right)M_n\left(x\right) \\
		&=-\sum\limits_{x\in\mathbb{S}_n^\circ}f\left(x\right)\frac{1}{m_n\left(x\right)}\frac{\delta^-x}{\delta x}\nabla^-\left(\frac{1}{s_n\left(x\right)}\nabla^+f\left(x\right)\right)m_n\left(x\right)\delta x \\
		&\qquad+\sum\limits_{x\in\mathbb{S}_n^-}k\left(x\right)f\left(x\right)^2M_n\left(x\right)-\rho\beta f\left(x_0\right)\nabla^+f\left(x_0\right)M_n\left(x_0\right) \\
		&=-\sum\limits_{x\in\mathbb{S}_n^\circ}f\left(x\right)\delta^-x\nabla^-\left(\frac{1}{s_n\left(x\right)}\nabla^+f\left(x\right)\right)+\sum\limits_{x\in\mathbb{S}_n^-}k\left(x\right)f\left(x\right)^2M_n\left(x\right) \\
		&\qquad-\rho\beta f\left(x_0\right)\nabla^+f\left(x_0\right)M_n\left(x_0\right) \\
		&=\sum\limits_{x\in\mathbb{S}_n^-}\frac{\delta^+x}{s_n\left(x\right)}\left(\nabla^+f\left(x\right)\right)^2+\sum\limits_{x\in\mathbb{S}_n^-}k\left(x\right)f\left(x\right)^2M_n\left(x\right)+\frac{1-\beta}{s_n\left(x_0\right)}f\left(x_0\right)\nabla^+f\left(x_0\right),
		\end{align*}
		where Lemma \ref{lemma:summation_by_parts} was used and $1/s_n(x_0)=\rho M_n(x_0)$.
		Furthermore, by noting that for Scheme 1, $1-\beta=0$, and for Scheme 2, $1-\beta=O(h_n)$, we derive
		\begin{align}
		&\sum_{x\in\mathbb{S}_n^-}\frac{\delta^+x}{s_n\left(x\right)}\left(\nabla^+f\left(x\right)\right)^2+\frac{1-\beta}{s_n\left(x_0\right)}f\left(x_0\right)\nabla^+f\left(x_0\right) \\
		&\qquad\leq\frac{C_1}{h_n}\sum_{x\in\mathbb{S}_n^-}\left(f\left(x^+\right)-f\left(x\right)\right)^2+\frac{C_2}{h_n}\left\vert1-\beta\right\vert\left\vert f\left(x_0\right)\right\vert\left\vert f\left(x_1\right)-f\left(x_0\right)\right\vert \\
		&\qquad\leq\frac{C_1}{h_n}\sum_{x\in\mathbb{S}_n^-}\left(f\left(x^+\right)-f\left(x\right)\right)^2+C_3\left\vert f\left(x_0\right)\right\vert\left\vert f\left(x_1\right)-f\left(x_0\right)\right\vert \\
		&\qquad\leq\frac{C_1}{h_n}\sum_{x\in\mathbb{S}_n^-}\left(f\left(x^+\right)-f\left(x\right)\right)^2+C_4\left(f\left(x_0\right)^2h_n+\frac{\left(f\left(x_0\right)-f\left(x_1\right)\right)^2}{h_n}\right) \\
		&\qquad\leq\frac{C_5}{h_n}\sum_{x\in\mathbb{S}_n^-}\left(f\left(x^+\right)-f\left(x\right)\right)^2+C_4f\left(x_0\right)^2h_n,
		\end{align}		
		and the constants $C_1,\ldots,C_5>0$ are independent of $n$ and $f$.
		Note that with $f(x_{n+1})=0$, the following holds
		\begin{align}
		\sum_{x\in\mathbb{S}_n^-}\left(f\left(x^+\right)-f\left(x\right)\right)^2&=\sum_{x\in\mathbb{S}_n^-}f\left(x^+\right)^2-2\sum_{x\in\mathbb{S}_n^-}f\left(x^+\right)f\left(x\right)+\sum_{x\in\mathbb{S}_n^-}f\left(x\right)^2 \\
		&=2\sum_{x\in\mathbb{S}_n^\circ}f\left(x\right)^2-\sum_{x\in\mathbb{S}_n^\circ}f\left(x\right)f\left(x^-\right)-\sum_{x\in\mathbb{S}_n^\circ}f\left(x\right)f\left(x^+\right) \\
		&\qquad\qquad+f\left(x_0\right)^2-f\left(x_0\right)f\left(x_1\right) \\
		&=\sum_{x\in\mathbb{S}_n^\circ}f\left(x\right)\left(-f\left(x^-\right)+2f\left(x\right)-f\left(x^+\right)\right)+f\left(x_0\right)^2-f\left(x_0\right)f\left(x_1\right).
		\end{align}
		Then
		\begin{align}
		&\sum\limits_{x\in\mathbb{S}_n^-}\frac{\delta^+x}{s_n\left(x\right)}\left(\nabla^+f\left(x\right)\right)^2+\frac{1-\beta}{s_n\left(x_0\right)}f\left(x_0\right)\nabla^+f\left(x_0\right) \\
		&\qquad\leq\frac{C_5}{h_n}\sum_{x\in\mathbb{S}_n^\circ}f\left(x\right)\left(-f\left(x^-\right)+2f\left(x\right)-f\left(x^+\right)\right)+\frac{C_5}{h_n}f\left(x_0\right)\left(f\left(x_0\right)-f\left(x_1\right)\right)+C_4f\left(x_0\right)^2h_n \\
		&\qquad=C_5\frac{1}{h_n}f_n^TAf_n+C_4f\left(x_0\right)^2h_n,
		\end{align}
		where $f_n=(f(x_0),f(x_1),\ldots,f(x_n))^T\in\mathbb{R}^{n+1}$ and $A$ is a $n+1$ by $n+1$ tridiagonal matrix with diagonal elements 2 (the first diagonal entry is equal to 1) and off diagonal elements -1.
		It can easily be seen that
		\begin{equation*}
		0\leq\frac{\sum_{x\in\mathbb{S}_n^-}k\left(x\right)f\left(x\right)^2M_n\left(x\right)}{\left(f,f\right)_n}\leq C_6\frac{\sum_{x\in\mathbb{S}_n^-}f\left(x\right)^2M_n\left(x\right)}{\sum_{x\in\mathbb{S}_n^-}f\left(x\right)^2M_n\left(x\right)}\leq C_6,
		\end{equation*}
		for some constant $C_6>0$ independent of $n$ and $f$ as all terms are positive and $k(x)$ is bounded.
		Lastly,
		\begin{equation*}
		\left(f,f\right)_n=\sum_{x\in\mathbb{S}_n^-}f\left(x\right)^2M_n\left(x\right) \geq\sum_{x\in\mathbb{S}_n^\circ}f\left(x\right)^2m_n\left(x\right)\delta x+f\left(x_0\right)^2M_n\left(x_0\right)h_n \geq C_7h_nf_n^Tf_n,
		\end{equation*}
		for a constant $C_7>0$ independent of $n$ and $f$ as $h_n\in(0,\delta)$ with $\delta$ small enough.
		Hence,
		\begin{equation*}
		\frac{\left(f,-G_nf\right)_n}{\left(f,f\right)_n}\leq\frac{C_5\frac{1}{h_n}f_n^TAf_n}{C_7h_nf_n^Tf_n}+\frac{C_4h_nf\left(x_0\right)^2}{C_7h_nf_n^Tf_n}+C_6.
		\end{equation*}
		Putting these results into \eqref{eq:min_max_principle_eigenvalue}, one obtains
		\begin{equation}
		\lambda_k^n\leq\frac{C_5}{C_7h_n^2}\underset{U_k}{\min}\ \underset{f\in U_k}{\max}\ \frac{f_n^TAf_n}{f_n^Tf_n}+C_8.
		\end{equation}
		As $\min_{U_k}\max_{f\in U_k}\frac{f_n^TAf_n}{f_n^Tf_n}$ is the $k$-th eigenvalue of the matrix $A$, one can use Theorem 2 in \cite{yueh2005} and obtain that
		\begin{equation}
		\lambda_k(A)=4\sin^2\frac{(2k-1)\pi}{4n+6}\leq\frac{4k^2\pi^2}{\left(n+1\right)^2},\quad k=1,2,\ldots,n+1.
		\end{equation}
		This now shows that
		\begin{equation*}
		\lambda_k^n\leq\frac{C_5}{C_7h_n^2}\frac{4k^2\pi^2}{\left(n+1\right)^2}+C_8\leq\frac{4\pi^2C_5}{C_7C^2_{9}}k^2+C_8\leq C_{10}k^2,
		\end{equation*}
		as by Assumption \ref{assumption:bounded_grids}, we have $C_{9}\leq h_n(n+1)$.
	\end{proof}
	
	\begin{lemma}\label{lemma:lemma4_OR_lower_bound_eigenvalue_approximation}
		Consider a grid such that $h_n\in(0,\delta)$ with $\delta$ small enough, then there exists a constant $C>0$ such that for any $1\leq k\leq h_n^{-1/4}$,
		\begin{equation}
		\lambda^n_k-\lambda_k\geq -Ch_n^{1/4},
		\end{equation}
		where $C$ is independent of $k$ and $n$.
	\end{lemma}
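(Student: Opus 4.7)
The strategy is to apply the Courant--Fischer min-max characterization to the continuous eigenvalue $\lambda_k$ and to use piecewise-linear continuous extensions of the first $k$ discrete eigenfunctions as the test subspace. Integration by parts on $(l,r)$, combined with the definition of $\mathcal{G}$ at $x=l$ and the fact that the point mass of $M$ at $l$ is $1/\rho$, shows that the Dirichlet form associated with $-\mathcal{G}$ is
\[
\mathcal{E}(f)=\int_l^r \frac{(f'(x))^2}{s(x)}\,dx+\int_{\mathbb{S}} k(x) f(x)^2\,M(dx),
\]
where the sticky boundary condition at $l$ is the natural boundary condition for the form and only $f(r)=0$ is essential. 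The min-max principle then gives
\[
\lambda_k=\min_{V_k}\ \max_{f\in V_k\setminus\{0\}}\ \frac{\mathcal{E}(f)}{(f,f)_{L^2(M)}},
\]
with the minimum taken over $k$-dimensional subspaces $V_k$ of the form domain.

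For the test subspace, let $\tilde{\varphi}_i^n$ denote the continuous piecewise-linear interpolation of the discrete eigenfunction $\varphi_i^n$ through the grid points $\{x_0,\ldots,x_n\}$, extended by zero at $x_{n+1}$, and set $V_k=\mathrm{span}(\tilde{\varphi}_1^n,\ldots,\tilde{\varphi}_k^n)$. Since the interpolants agree with the linearly independent $\varphi_i^n$ on the grid, they are linearly independent as continuous functions, so $\dim V_k=k$ for $n$ large enough. For $f=\sum_{i=1}^k c_i\tilde{\varphi}_i^n\in V_k$ with discrete counterpart $\tilde{f}=\sum_{i=1}^k c_i\varphi_i^n$, the discrete min-max argument used in the proof of Lemma A.3 gives
\[
\mathcal{E}_n(\tilde f):=(\tilde f,-G_n\tilde f)_n\leq \lambda_k^n\,(\tilde f,\tilde f)_n,
\]
where $\mathcal{E}_n$ has the explicit form (from that same proof) consisting of a kinetic sum $\sum_{x\in\mathbb{S}_n^-}(\delta^+x/s_n(x))(\nabla^+\tilde f(x))^2$, a killing sum $\sum_{x\in\mathbb{S}_n^-}k(x)\tilde f(x)^2 M_n(x)$, and a boundary correction $(1-\beta)\tilde f(x_0)\nabla^+\tilde f(x_0)/s_n(x_0)$ which vanishes for Scheme 1 and is of order $h_n$ for Scheme 2.

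The core technical step is to establish the two comparison estimates
\[
\mathcal{E}(f)\leq \mathcal{E}_n(\tilde f)+O(h_n^{1/4})(\tilde f,\tilde f)_n,\qquad (f,f)_{L^2(M)}\geq (1-O(h_n))(\tilde f,\tilde f)_n.
\]
Because $f$ is piecewise linear, $(f'(x))^2=(\nabla^+\tilde f(x_i))^2$ on each subinterval $(x_i,x_{i+1})$, so the continuous kinetic integral matches the discrete kinetic sum up to errors of order $h_n$ coming from $1/s_n-1/s=O(h_n)$ (Proposition \ref{prop:convergence_speed_measure_density}). The killing integral and the $L^2(M)$ inner product are compared to their discrete analogs by standard Riemann-sum estimates combined with the $O(h_n)$ approximation of $m_n$ and $M_n(x_0)$. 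Combining the two comparison estimates with $\mathcal{E}_n(\tilde f)\leq\lambda_k^n(\tilde f,\tilde f)_n$ and the eigenvalue upper bound $\lambda_k^n\leq Ck^2$ from Lemma \ref{lemma:square_k_lower_upper_bounds}, together with $k\leq h_n^{-1/4}$, yields the Rayleigh quotient bound $\mathcal{E}(f)/(f,f)_{L^2(M)}\leq \lambda_k^n+Ch_n^{1/4}$. Taking the maximum over $V_k$ and then invoking min-max gives $\lambda_k\leq\lambda_k^n+Ch_n^{1/4}$, which is equivalent to the claim.

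The main obstacle is the careful bookkeeping of the error terms in the Rayleigh-quotient comparison, particularly near the sticky endpoint. The point mass $1/\rho$ of $M$ at $l$ is only approximated by $M_n(x_0)$ with error of order $h_n$, and the $(1-\beta)$ correction in $\mathcal{E}_n$ must be absorbed using Cauchy--Schwarz together with the uniform upper and lower bounds on $s_n,m_n,M_n$ provided by Corollary \ref{cor:lower_upper_bounds_sn_mn}. The restriction $k\leq h_n^{-1/4}$ enters through the balance $h_n\cdot k^3=h_n^{1/4}$: the factor $k^3$ tracks the growth of derivatives of discrete eigenfunctions, obtainable from the eigenvalue equation $G_n\varphi_k^n=-\lambda_k^n\varphi_k^n$ in Sturm--Liouville form together with the bound $\lambda_k^n\leq Ck^2$ and a discrete analog of the estimates in Proposition \ref{prop:SLeigen}. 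This is also the step where the specific structure of the discrete form at the boundary (distinguishing Schemes 1 and 2) must be correctly accounted for, though here it only contributes lower-order terms.
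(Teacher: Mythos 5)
Your proposal is correct and follows essentially the same route as the paper's proof: apply the min-max principle for the self-adjoint $\mathcal{G}$ with the test space spanned by piecewise-linear interpolants of the discrete eigenfunctions, compare the continuous Dirichlet form and $L^2(M)$ norm with their discrete counterparts (absorbing the $(1-\beta)$ boundary correction and the Riemann-sum errors), and close the argument with $\lambda_k^n\leq Ck^2$ and $k\leq h_n^{-1/4}$. The only minor discrepancy is bookkeeping: the paper's denominator error is $O(\sqrt{\lambda_k^n}\,h_n)=O(h_n^{3/4})$ rather than your stated $O(h_n)$, but this does not affect the final $O(h_n^{1/4})$ bound.
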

	
	\begin{proof}
		It should first be noted that as $\mathcal{G}$ is a self-adjoint operator, the min-max principle holds (see \cite{eschwe2004}, Theorem 2.1).
		In particular,
		\begin{equation}
		\lambda_k=\underset{L\subset\mathcal{D},\dim L=k}{\min}\ \underset{\psi\in L,\psi\neq 0}{\max}\ \frac{\left(\psi,-\mathcal{G}\psi\right)}{\left(\psi,\psi\right)},
		\end{equation}
		where $L$ is a linear subspace of the domain of $\mathcal{G}$.
		
		For $i=1,\ldots,k$ define $\psi_i:\mathbb{S}\to\mathbb{R}$ as a linear interpolation of the approximate eigenfunction $\varphi_i^n$ over the interval $\mathbb{S}$, which is given by
		\begin{equation}
		\psi_i\left(x\right)=\varphi_i^n\left(y^-\right)+\nabla^-\varphi_i^n\left(y\right)\left(x-y^-\right),
		\end{equation}
		for $x\in[y^-,y]$ and $y\in\mathbb{S}^+_n$.
		Then $\{\psi_1,\ldots,\psi_k\}$ form a $k$-dimensional linear space.
		Furthermore, set $\psi_a(x)=\sum_{i=1}^ka_i\psi_i(x)$ such that the $a_i$ are normalized, i.e. $\sum_{i=1}^ka_i^2=1$.
		Using the min-max principle and integration by parts, we obtain
		\begin{align}
		\lambda_k&\leq\underset{a_1,\ldots,a_k:\sum_{i=1}^ka_i^2=1}{\max}\frac{\left(\psi_a,-\mathcal{G}\psi_a\right)}{\left(\psi_a,\psi_a\right)} \nonumber \\ &\quad=\underset{\sum_{i=1}^ka_i^2=1}{\max}\frac{\int_l^r\frac{\psi_a^\prime\left(x\right)^2}{s\left(x\right)}dx+\int_l^rk\left(x\right)\psi_a\left(x\right)^2M\left(dx\right)}{\int_l^r\psi_a\left(x\right)^2M\left(dx\right)}. \label{eq:lemma4_li2017a_eq1}
		\end{align}
		
		We will now estimate the different terms appearing in this equation.
		First, note that as $1-\beta=0$ for Scheme 1 and $\vert 1-\beta\vert\leq C\delta^+x_0$ for Scheme 2, there holds
		\begin{align}
		\left\vert 1-\beta\right\vert\left\vert\psi_a\left(x_0\right)\nabla^+\psi_a\left(x_0\right)\right\vert&\leq C_1\psi_a\left(x_0\right)^2\sqrt{\delta^+x_0}+C_1\left(\nabla^+\psi_a\left(x_0\right)\right)^2\left(\delta^+x_0\right)^{3/2} \nonumber \\
		&\leq C_2\sqrt{h_n}+C_2\sqrt{\delta}\sum_{x\in\mathbb{S}_n^-}\left(\nabla^+\psi_a\left(x\right)\right)^2\delta^+x,\label{eq:proof_lemma_a3_eq2}
		\end{align}
		with constants $C_1,C_2>0$ independent of $a$ and $n$, because
		\begin{align}
		\psi_a\left(x_0\right)^2\leq\sum_{x\in\mathbb{S}_n^-}\psi_a\left(x_0\right)\frac{M_n\left(x\right)}{M_n\left(x\right)}\leq C_3\sum_{i_1=1}^k\sum_{i_2=1}^ka_{i_1}a_{i_2}\sum_{x\in\mathbb{S}_n^-}\varphi_{i_1}^n\left(x\right)\varphi_{i_2}^n\left(x\right)M_n\left(x\right)=C_3,
		\end{align}
		as $\sum_{x\in\mathbb{S}_n^-}\varphi_{i_1}^n(x)\varphi_{i_2}^n(x)M_n(x)=(\varphi_{i_1}^n,\varphi_{i_2}^n)_n=\delta_{i_1,i_2}$ and $C_3>0$ is independent of $a$ and $n$.
		Using this result and the fact that $\psi_a$ is a piecewise linear function, we obtain
		\begin{align}
		&\int_l^r\frac{1}{s\left(x\right)}\psi^\prime_a\left(x\right)^2dx-\sum_{x\in\mathbb{S}_n^-}\frac{1}{s_n\left(x\right)}\left(\nabla^+\psi_a\left(x\right)\right)^2\delta^+x-\frac{1-\beta}{s_n\left(x_0\right)}\psi_a\left(x_0\right)\nabla^+\psi_a\left(x_0\right) \\
		&\qquad\leq\sum_{x\in\mathbb{S}_n^-}\left(\nabla^+\psi_a\left(x\right)\right)^2\int_x^{x^+}\left\vert\frac{1}{s\left(y\right)}-\frac{1}{s_n\left(x\right)}\right\vert dy-\frac{1-\beta}{s_n\left(x_0\right)}\psi_a\left(x_0\right)\nabla^+\psi_a\left(x_0\right) \\
		&\qquad\leq C_4h_n\sum_{x\in\mathbb{S}_n^-}\left(\nabla^+\psi_a\left(x\right)\right)^2\delta^+x+C_5\left(\sqrt{h_n}+\sqrt{\delta}\sum_{x\in\mathbb{S}_n^-}\left(\nabla^+\psi_a\left(x\right)\right)^2\delta^+x\right) \\
		&\qquad\leq C_6h_n\sum_{x\in\mathbb{S}_n^-}\left(\nabla^+\psi_a\left(x\right)\right)^2\delta^+x+C_5\sqrt{h_n},\label{eq:proof_lemma_a3_eq1}
		\end{align}
		where $C_4,C_5,C_6>0$ are independent of $a$ and $n$.
		The term appearing in \eqref{eq:proof_lemma_a3_eq1} can be handled in the following way.
		\begin{align}
		&\sum_{x\in\mathbb{S}_n^-}\left(\nabla^+\psi_a\left(x\right)\right)^2\delta^+x \nonumber \\
		&\quad\leq -C_7\frac{\beta}{s_n\left(x_0\right)}\psi_a\left(x_0\right)\nabla^+\psi_a\left(x_0\right)+C_7\frac{\beta}{s_n\left(x_0\right)}\psi_a\left(x_0\right)\nabla^+\psi_a\left(x_0\right) \\
		&\quad\qquad+C_7\sum_{x\in\mathbb{S}_n^-}\frac{1}{s_n\left(x\right)}\left(\nabla^+\psi_a\left(x\right)\right)^2\delta^+x+C_7\sum_{x\in\mathbb{S}_n^-}k\left(x\right)M_n\left(x\right)\psi_a\left(x\right)^2 \\
		&\quad=-C_7\beta\rho M_n\left(x_0\right)\psi_a\left(x_0\right)\nabla^+\psi_a\left(x_0\right)+\frac{C_7\beta}{s_n\left(x_0\right)}\psi_a\left(x_0\right)\nabla^+\psi_a\left(x_0\right) \\
		&\quad\qquad-C_7\sum_{x\in\mathbb{S}_n^\circ}\psi_a\left(x\right)\delta^-x\nabla^-\left(\frac{1}{s_n\left(x\right)}\nabla^+\psi_a\left(x\right)\right)-\frac{C_7}{s_n\left(x_0\right)}\psi_a\left(x_0\right)\nabla^+\psi_a\left(x_0\right) \\
		&\quad\qquad+C_7\sum_{x\in\mathbb{S}_n^-}k\left(x\right)M_n\left(x\right)\psi_a\left(x\right)^2 \\
		&\quad=-C_7\sum_{x\in\mathbb{S}_n^-}\psi_a\left(x\right)M_n\left(x\right)G_n\psi_a\left(x\right)+\frac{C_7\left(\beta-1\right)}{s_n\left(x_0\right)}\psi_a\left(x_0\right)\nabla^+\psi_a\left(x_0\right) \\
		&\quad\leq C_7\sum_{i_1=1}^k\sum_{i_2=1}^ka_{i_1}a_{i_2}\lambda_{i_2}^n\sum_{x\in\mathbb{S}_n^-}\varphi_{i_1}^n\left(x\right)\varphi_{i_2}^n\left(x\right)M_n\left(x\right)+C_8\left(\sqrt{h_n}+\sqrt{\delta}\sum_{x\in\mathbb{S}_n^-}\left(\nabla^+\psi_a\left(x\right)\right)^2\delta^+x\right) \\
		&\quad\leq C_9\lambda_k^n+C_7\sqrt{\delta}\sum_{x\in\mathbb{S}_n^-}\left(\nabla^+\psi_a\left(x\right)\right)^2\delta^+x, \label{eq:proof_lemma4_OR_eq1}
		\end{align}
		where the last inequality follows from the fact that $0\leq \lambda_1^n<\lambda_2^n<\dots<\lambda_k^n$.
		The constants $C_7,C_8,C_9>0$ are independent of $a$ and $n$.
		We can now choose $\delta$ small enough, such that $1-C_7\sqrt{\delta}>0$, then
		\begin{align}
		\sum_{x\in\mathbb{S}_n^-}\left(\nabla^+\psi_a\left(x\right)\right)^2\delta^+x\leq\frac{C_9}{1-C_7\sqrt{\delta}}\lambda_k^n\leq C_{10}\lambda_k^n.
		\end{align}
		Combining \eqref{eq:proof_lemma_a3_eq1} and the previous results together, yields
		\begin{align}
		\int_l^r\frac{1}{s\left(x\right)}\psi^\prime_a\left(x\right)dx&\leq\frac{1-\beta}{s_n\left(x_0\right)}\psi_a\left(x_0\right)\nabla^+\psi_a\left(x_0\right)+\sum_{x\in\mathbb{S}_n^-}\frac{1}{s_n\left(x\right)}\left(\nabla^+\psi_a\left(x\right)\right)^2\delta^+x \\
		&\qquad+C_{11}\lambda_k^nh_n+C_5h_n^{1/2}.
		\end{align}
		The second term in \eqref{eq:lemma4_li2017a_eq1} can be handled in the following way.
		\begin{align*}
		&\left\vert\int_l^rk\left(x\right)\psi_a\left(x\right)^2M\left(dx\right)-\sum_{x\in\mathbb{S}_n^-}k\left(x\right)\psi_a\left(x\right)^2M_n\left(x\right)\right\vert \\
		&\qquad\leq\left\vert k\left(x_0\right)\psi_a\left(x_0\right)^2M\left(x_0\right)-k\left(x_0\right)\psi_a\left(x_0\right)^2M_n\left(x_0\right)\right\vert \\
		&\qquad\qquad+\left\vert\int_l^rk\left(x\right)\psi_a\left(x\right)^2m\left(x\right)dx-\sum_{x\in\mathbb{S}_n^\circ}k\left(x\right)\psi_a\left(x\right)^2m_n\left(x\right)\delta x\right\vert \\
		&\qquad\leq C_{12}h_n\psi_a\left(x_0\right)^2+\frac{1}{2}\sum_{x\in\mathbb{S}_n^-}\int_x^{x^+}\left\vert k\left(y\right)\psi_a\left(y\right)^2m\left(y\right)-k\left(x\right)\psi_a\left(x\right)^2m_n\left(x\right)\right\vert dy \\
		&\qquad\qquad+\frac{1}{2}\sum_{x\in\mathbb{S}_n^-}\int_x^{x^+}\left\vert k\left(y\right)\psi_a\left(y\right)^2m\left(y\right)-k\left(x^+\right)\psi_a\left(x^+\right)^2m_n\left(x^+\right)\right\vert dy \\
		&\qquad\leq C_{13}\left(\sqrt{\lambda_k^n}h_n+\lambda_k^nh_n^2\right),
		\end{align*}
		where the last inequality follows in the same way as in the proof of Lemma 4 in \cite{zhang2019a}.
		Using this result to bound the numerator, one obtains
		\begin{align*}
		\qquad&\int_l^r\frac{1}{s\left(x\right)}\psi_a^\prime\left(x\right)^2dx+\int_l^rk\left(x\right)\psi_a\left(x\right)^2M\left(dx\right) \\
		&\qquad\leq -\sum_{x\in\mathbb{S}_n^-}\psi_a\left(x\right)M_n\left(x\right)G_n\psi_a\left(x\right)+C_{14}\left(\left(\sqrt{\lambda_k^n}+\lambda_k^n\right)h_n+\lambda_k^nh_n^2+h_n^{1/2}\right) \\
		&\qquad\leq\lambda_k^n+C_{14}\left(\left(\sqrt{\lambda_k^n}+\lambda_k^n\right)h_n+\lambda_k^nh_n^2+h_n^{1/2}\right),
		\end{align*}
		for some constant $C_{14}>0$ independent of $a$, $k$ and $n$.
		The denominator can be estimated similarly as before by setting $k(x)=1$.
		\begin{equation*}
		\left\vert\int_l^r\psi_a\left(x\right)^2M\left(dx\right)-1\right\vert\leq C_{15}\left(\sqrt{\lambda_k^n}h_n+\lambda_k^nh_n^2\right),
		\end{equation*}
		where $C_{15}>0$ is a constant independent of $a$, $k$ and $n$.
		As all of the constants are independent of $a$, there follows
		\begin{equation}
		\lambda_k\leq \frac{\lambda_k^n+C_{14}\left(\left(\sqrt{\lambda_k^n}+\lambda_k^n\right)h_n+\lambda_k^nh_n^2+h_n^{1/2}\right)}{1-C_{15}\left(\sqrt{\lambda_k^n}h_n+\lambda_k^nh_n^2\right)}.
		\end{equation}
		Using Lemma \ref{lemma:square_k_lower_upper_bounds}, i.e. $\lambda_k^n\leq C_{16}k^2\leq C_{16}h_n^{-1/2}$ for some $C_{16}>0$ independent of $k$ and $n$, there holds
		\begin{align*}
		\lambda_k-\lambda_k^n&\leq\frac{C_{14}\left(\left(\sqrt{\lambda_k^n}+\lambda_k^n\right)h_n+\lambda_k^nh_n^2+h_n^{1/2}\right)+C_{15}\lambda_k^n\left(\sqrt{\lambda_k^n}h_n+\lambda_k^nh_n^2\right)}{1-C_{15}\left(\sqrt{\lambda_k^n}h_n+\lambda_k^nh_n^2\right)} \\
		&\leq\frac{C_{17}\left(h_n^{3/4}+h_n^{1/2}+h_n^{3/2}+h_n^{1/4}+h_n^{1/2}\right)}{1-C_{15}\left(\delta^{3/4}+\delta^{3/2}\right)}\leq C_{18}h_n^{1/4},
		\end{align*}
		for constants $C_{17},C_{18}>0$ independent of $k$ and $n$, as long as $\delta$ is small enough, such that $1-C_{15}(\delta^{3/4}+\delta^{3/2})>0$.
	\end{proof}
	
	\begin{lemma}
		If $h_n\in(0,\delta)$ for $\delta$ small enough, there exists a constant $C>0$, such that for any $1\leq k\leq n$,
		\begin{equation}
		\left\Vert\varphi_k^n\right\Vert_{n,\infty}\leq Ck,
		\end{equation}
		where $C$ is independent of $k$ and $n$.
	\end{lemma}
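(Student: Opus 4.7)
The plan is to reduce the sup-norm bound to two ingredients: (i) an energy (Dirichlet-form) estimate of the gradient of $\varphi_k^n$ on the grid, obtained from the eigenvalue equation together with the normalization $(\varphi_k^n,\varphi_k^n)_n=1$; and (ii) a discrete fundamental theorem of calculus at the Dirichlet end $x_{n+1}$, which upgrades the energy estimate to a sup-norm estimate through Cauchy--Schwarz.

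For Step (i), I would multiply $G_n\varphi_k^n(x)=-\lambda_k^n\varphi_k^n(x)$ by $\varphi_k^n(x)M_n(x)$, sum over $x\in\mathbb{S}_n^-$, and apply the summation-by-parts identity (Lemma \ref{lemma:summation_by_parts}), exactly as in the derivation at the start of the proof of Lemma \ref{lemma:lemma4_OR_lower_bound_eigenvalue_approximation}. This yields
\begin{equation*}
\lambda_k^n=\sum_{x\in\mathbb{S}_n^-}\frac{\delta^+x}{s_n(x)}\bigl(\nabla^+\varphi_k^n(x)\bigr)^2+\sum_{x\in\mathbb{S}_n^-}k(x)\varphi_k^n(x)^2 M_n(x)+\frac{1-\beta}{s_n(x_0)}\varphi_k^n(x_0)\nabla^+\varphi_k^n(x_0).
\end{equation*}
The $k$-term is nonnegative, and the boundary term is harmless because $1-\beta=0$ for Scheme 1 and $|1-\beta|=O(h_n)$ for Scheme 2; this last term is exactly the quantity already controlled by the small-factor absorption device used in the proof of Lemma \ref{lemma:lemma4_OR_lower_bound_eigenvalue_approximation} (equations around \eqref{eq:proof_lemma_a3_eq2}--\eqref{eq:proof_lemma4_OR_eq1}). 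Combining this with the lower and upper bounds on $s_n$ from Corollary \ref{cor:lower_upper_bounds_sn_mn} and then invoking the upper bound $\lambda_k^n\leq Ck^2$ of Lemma \ref{lemma:square_k_lower_upper_bounds} gives the energy estimate
\begin{equation*}
\sum_{x\in\mathbb{S}_n^-}\bigl(\nabla^+\varphi_k^n(x)\bigr)^2\delta^+x\leq C_1 k^2
\end{equation*}
for a constant $C_1>0$ independent of $k$ and $n$, provided $\delta$ is small enough so the absorption step goes through.

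For Step (ii), I use the Dirichlet condition $\varphi_k^n(x_{n+1})=0$. Telescoping gives for every $x_j\in\mathbb{S}_n^-$
\begin{equation*}
\varphi_k^n(x_j)=-\sum_{i=j}^n\nabla^+\varphi_k^n(x_i)\,\delta^+x_i,
\end{equation*}
and Cauchy--Schwarz together with $\sum_{i=j}^n\delta^+x_i\leq r-l$ yields
\begin{equation*}
|\varphi_k^n(x_j)|^2\leq(r-l)\sum_{i=j}^n\bigl(\nabla^+\varphi_k^n(x_i)\bigr)^2\delta^+x_i\leq(r-l)C_1 k^2.
\end{equation*}
Taking the maximum over $x_j\in\mathbb{S}_n^-$ and a square root delivers $\|\varphi_k^n\|_{n,\infty}\leq Ck$ with $C$ independent of $k$ and $n$.

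The main obstacle is the bookkeeping in Step (i): the mixed boundary term $\frac{1-\beta}{s_n(x_0)}\varphi_k^n(x_0)\nabla^+\varphi_k^n(x_0)$ is not \emph{a priori} of a definite sign and involves both $\varphi_k^n(x_0)^2$ and $(\nabla^+\varphi_k^n(x_0))^2$, so one must carefully split it via Young's inequality and absorb the gradient piece back into the left-hand side of the energy identity (exploiting $h_n\leq\delta$ small). Once this absorption is carried out as in Lemma \ref{lemma:lemma4_OR_lower_bound_eigenvalue_approximation}, the rest of the argument is routine.
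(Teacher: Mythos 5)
Your proposal is correct and follows essentially the same route as the paper: the paper also telescopes from the Dirichlet endpoint $x_{n+1}$, applies Cauchy--Schwarz to get $|\varphi_k^n(y)|\leq C\sqrt{\lambda_k^n}$, obtains the energy bound $\sum_{x\in\mathbb{S}_n^-}(\nabla^+\varphi_k^n(x))^2\delta^+x\leq C\lambda_k^n$ by exactly the summation-by-parts and boundary-term absorption argument of Lemma \ref{lemma:lemma4_OR_lower_bound_eigenvalue_approximation} (with $\psi_a$ replaced by $\varphi_k^n$), and concludes with $\lambda_k^n\leq Ck^2$ from Lemma \ref{lemma:square_k_lower_upper_bounds}. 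You merely spell out the energy step that the paper handles by reference.
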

	
	\begin{proof}
		Note that for every $y\in\mathbb{S}^-_n$
		\begin{equation}
		\varphi_k^n\left(y\right)=\sum_{y\leq x<x_{n+1}}\varphi_k^n\left(x\right)-\varphi_k^n\left(x^+\right)=-\sum_{y\leq x<x_{n+1}}\nabla^+\varphi_k^n\left(x\right)\delta^+x,
		\end{equation}
		as $\varphi_k^n(x_{n+1})=0$.
		Then
		\begin{align}
		\left\vert\varphi_k^n\left(y\right)\right\vert&=\left\vert-\sum_{y\leq x<x_{n+1}}\nabla^+\varphi_k^n\left(x\right)\delta^+x\right\vert\leq\sum_{x\in\mathbb{S}_n^-}\left\vert\nabla^+\varphi_k^n\left(x\right)\right\vert\delta^+x \\
		&\leq\sqrt{\sum_{x\in\mathbb{S}_n^-}\left(\nabla^+\varphi_k^n\left(x\right)\right)^2\delta^+x\sum_{x\in\mathbb{S}_n^-}\delta^+x}\leq C_1\sqrt{\lambda_k^n},
		\end{align}
		for a constant $C_1>0$ independent of $k$, $n$ and $y$ because of the same steps as shown in \eqref{eq:proof_lemma4_OR_eq1} when $\psi_a$ is replaced by $\varphi_k^n$.
		Furthermore, by Lemma \ref{lemma:square_k_lower_upper_bounds}, i.e. $\lambda_k^n\leq C_2k^2$, there follows
		\begin{equation}
		\left\Vert\varphi_k^n\right\Vert_{n,\infty}\leq C_1\sqrt{\lambda_k^n}\leq C_3k,\label{eq:proof_lemma5_OR_eq4}
		\end{equation}
		for constants $C_2,C_3>0$ independent of $k$ and $n$.
	\end{proof}
	
	\bigskip
	\noindent\emph{Proof of Proposition \ref{prop:li_or_paper_prop2}}:
	For $x\in\mathbb{S}_n^\circ$, the proof of Proposition 2 in \cite{zhang2019} shows that
	\begin{equation*}
	\left\vert G_n\varphi_k\left(x\right)-\mathcal{G}\varphi_k\left(x\right)\right\vert\leq C_1k^4h_n^2.
	\end{equation*}
	For $\varphi_k$ the Taylor expansion at $x_1$ is given by
	\begin{equation}
	\varphi_k\left(x_1\right)=\varphi_k\left(x_0\right)+\varphi_k^\prime\left(x_0\right)\delta^+x_0+\frac{1}{2}\varphi_k^{\prime\prime}\left(x_0\right)\left(\delta^+x_0\right)^2+\frac{1}{6}\varphi_k^{\prime\prime\prime}\left(x_0\right)\left(\delta^+x_0\right)^3+\frac{1}{24}\varphi_k^{(4)}\left(\eta\right)\left(\delta^+x_0\right)^4,
	\end{equation}
	for some $\eta\in[x_0,x_1]$.
	Subtracting $\varphi_k(x_0)$ and dividing by $\delta^+x_0$ yields
	\begin{equation}
	\nabla^+\varphi_k\left(x_0\right)=\varphi_k^\prime\left(x_0\right)+\frac{1}{2}\varphi_k^{\prime\prime}\left(x_0\right)\delta^+x_0+\frac{1}{6}\varphi_k^{\prime\prime\prime}\left(x_0\right)\left(\delta^+x_0\right)^2+\frac{1}{24}\varphi_k^{(4)}\left(\eta\right)\left(\delta^+x_0\right)^3.
	\end{equation}
	Further, note that as $\varphi_k$ is in the domain of the generator, it also satisfies
	\begin{equation}
	\frac{1}{2}\varphi_k^{\prime\prime}\left(x_0\right)=\frac{\rho-\mu\left(x_0\right)}{\sigma^2\left(x_0\right)}\varphi_k^\prime\left(x_0\right).
	\end{equation}
	Using these results, one obtains
	\begin{align*}
	&\left\vert G_n\varphi_k\left(x_0\right)-\mathcal{G}\varphi_k\left(x_0\right)\right\vert=\left\vert\rho\beta\nabla^+\varphi_k\left(x_0\right)-\rho\varphi_k^\prime\left(x_0\right)\right\vert \\
	&\qquad=\rho\left\vert\left(\beta-1\right)\varphi^\prime_k\left(x_0\right)+\frac{\beta}{2}g^{\prime\prime}\left(x_0\right)\delta^+x_0+\frac{\beta}{6}\varphi_k^{\prime\prime\prime}\left(x_0\right)\left(\delta^+x_0\right)^2+\frac{\beta}{24}\varphi_k^{(4)}\left(\eta\right)\left(\delta^+x_0\right)^3\right\vert \\
	&\qquad=\rho\left\vert\left(\beta-1\right)\varphi^\prime_k\left(x_0\right)+\beta\frac{\rho-\mu\left(x_0\right)}{\sigma^2\left(x_0\right)}\varphi_k^\prime\left(x_0\right)\delta^+x_0+\frac{\beta}{6}\varphi_k^{\prime\prime\prime}\left(x_0\right)\left(\delta^+x_0\right)^2+\frac{\beta}{24}\varphi_k^{(4)}\left(\eta\right)\left(\delta^+x_0\right)^3\right\vert \\ \\
	&\qquad\leq\begin{cases} C_2\left\vert\varphi_k^\prime\left(x_0\right)\right\vert\delta^+x_0+O\left(h_n^2\right)\leq C_3\left\Vert\varphi_k^\prime\right\Vert_\infty h_n+O\left(h_n^2\right) &\qquad\textrm{for Scheme 1,} \\ \\ C_4\frac{\left\vert\beta\right\vert}{6}\left\vert\varphi^{\prime\prime\prime}\left(x_0\right)\right\vert\left(\delta^+x_0\right)^2+O\left(h_n^3\right)\leq C_5\left\Vert\varphi^{\prime\prime\prime}_k\right\Vert_\infty h_n^2+O\left(h_n^3\right) &\qquad\textrm{for Scheme 2},\end{cases}
	\end{align*}
	where the boundedness of $\beta$ was used and the particular form of $\beta$ in Scheme 2 allows the terms involving $\varphi_k^\prime(x_0)$ to cancel.
	Here the constants $C_2,C_3,C_4,C_5>0$ are independent of $n$ and $k$.
	
	Applying the result from Proposition \ref{prop:SLeigen} shows that $\Vert\varphi_k^\prime\Vert_\infty\leq C_6k\leq C_6k^3$ and $\Vert\varphi_k^{\prime\prime\prime}\Vert_\infty\leq C_7k^3$.
	Thus summarizing the two cases for $x_0$ and combining it with the result for $x\in\mathbb{S}_n^\circ$ implies
	\begin{equation}
	\left\Vert G_n\varphi_k-\mathcal{G}\varphi_k\right\Vert_{n,\infty}\leq\max\{C_1k^4h_n^2,C_8k^3h_n^\gamma\}\leq C_9k^4h_n^\gamma,\label{eq:proposition_2_li_eq1}
	\end{equation}
	where $C_8,C_9>0$ are independent of $n$ and $k$ and as always $\gamma=1$ for Scheme 1 and $\gamma=2$ for Scheme 2.
	Using the fact that $\mathcal{G}\varphi_k=-\lambda_k\varphi_k$ implies
	\begin{equation}
	\left\vert\mu_k^n-\lambda_k\right\vert\leq C_{10}k^4h_n^\gamma,
	\end{equation}
	for some constant $C_{10}>0$ independent of $k$ and $n$ and $\mu_k^n=\arg\min_{\mu\in\Lambda(G_n)}\vert\mu-\lambda_k\vert$ where $\Lambda(G_n)$ is the set of eigenvalues of $-G_n$.
	The arguments stated in the proof of Proposition 2 in \cite{zhang2019} and Proposition 3.6 in \cite{li2018} still remain valid, hence it can be shown that $\mu_k^n=\lambda_k^n$ and so the claim follows for any $k\leq h_n^{-1/4}$ with sufficiently small $h_n$.\qed
	
	\bigskip
	\begin{lemma}\label{lemma:or_2018_lemma7}
		There holds
		\begin{align}
		&\left\vert\sum_{x\in\mathbb{S}_n^-}f\left(x\right)M_n\left(x\right)-\int_{x_0}^{x_{n+1}}f\left(x\right)M\left(dx\right)\right\vert\leq C\max\{\left\Vert f\right\Vert_\infty,\left\Vert f^{\prime\prime}\right\Vert_\infty\}h_n^\gamma,
		\end{align}
		for some constant $C>0$ independent of $n$ and $f$.
	\end{lemma}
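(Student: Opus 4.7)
The plan is to decompose both the sum and the integral along the Lebesgue-plus-atomic structure of the speed measure. Writing $\int_{x_0}^{x_{n+1}}f\,M(dx)=\tfrac{1}{\rho}f(x_0)+\int_{x_0}^{x_{n+1}}f(x)m(x)\,dx$ and $\sum_{x\in\mathbb{S}_n^-}f(x)M_n(x)=f(x_0)M_n(x_0)+\sum_{x\in\mathbb{S}_n^\circ}f(x)m_n(x)\delta x$, the target difference splits as $E_\partial+E_\circ$, where $E_\partial=f(x_0)(M_n(x_0)-M(x_0))$ and $E_\circ$ is the discrepancy between the remaining interior sum and Lebesgue integral. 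Proposition \ref{prop:convergence_speed_measure_density} immediately gives $E_\partial=\tfrac{\alpha}{\rho\sigma^2(x_0)}f(x_0)\delta^+x_0+O(h_n^2)\|f\|_\infty$; this is only $O(h_n)$ for Scheme 2, so the claimed rate $\gamma=2$ can only emerge from a cancellation between $E_\partial$ and a matching $O(h_n)$ boundary term produced by $E_\circ$. Spotting and verifying that cancellation is, I expect, the main difficulty.

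For $E_\circ$ I would first insert $m_n=m+(m_n-m)$ and use Proposition \ref{prop:convergence_speed_measure_density} to reduce the $(m_n-m)$ portion to $\sum_{x\in\mathbb{S}_n^\circ}f(x)m(x)\tfrac{\mu(x)}{\sigma^2(x)}(\delta^+x-\delta^-x)\delta x+O(h_n^2)\|f\|_\infty$. Abel summation over this sum, using that $(\delta^+x_i)^2-(\delta^-x_i)^2=a_i-a_{i-1}$ with $a_i=(\delta^+x_i)^2$ telescopes and that discrete derivatives of the smooth coefficient $fm\mu/\sigma^2$ are $O(h_n)(\|f\|_\infty+\|f'\|_\infty)$, lets me bound the whole term by $O(h_n^2)(\|f\|_\infty+\|f''\|_\infty)$ (a Landau-Kolmogorov interpolation inequality on $[l,r]$ controls $\|f'\|_\infty$ in terms of $\|f\|_\infty$ and $\|f''\|_\infty$). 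The remaining purely Lebesgue piece is a midpoint quadrature: each $x_i$ is the exact midpoint of $[x_{i-1/2},x_{i+1/2}]$ with $x_{j+1/2}:=(x_j+x_{j+1})/2$, so a standard midpoint-rule estimate yields total error $O(h_n^2)(\|f\|_\infty+\|f''\|_\infty)$ on $[x_{1/2},x_{n+1/2}]$; the two uncovered half-intervals near the boundaries contribute $-\tfrac12 f(x_0)m(x_0)\delta^+x_0-\tfrac12 f(x_{n+1})m(x_{n+1})\delta^+x_n+O(h_n^2)\|f\|_\infty$.

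Collecting $E_\partial$ and $E_\circ$, the full $O(h_n)$ residual equals
\begin{equation*}
\Bigl[\frac{\alpha}{\rho\sigma^2(x_0)}-\tfrac12 m(x_0)\Bigr]f(x_0)\delta^+x_0-\tfrac12 f(x_{n+1})m(x_{n+1})\delta^+x_n.
\end{equation*}
Since $s(x_0)=1$ from the normalization $s(x)=\exp(-\int_l^x2\mu/\sigma^2\,dy)$, one has $m(x_0)=2/\sigma^2(x_0)$, so the left bracket reduces to $(\alpha-\rho)/(\rho\sigma^2(x_0))$: it vanishes for Scheme 2, where $\alpha=\rho$, and stays $O(1)$ for Scheme 1, where $\alpha=\mu(x_0)$, reproducing the announced dichotomy $\gamma\in\{1,2\}$. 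The right-endpoint residual is handled by recalling that under Assumption \ref{assumption:rb} the payoff is extended by $f(\partial)=0$ and $r=x_{n+1}$ is killing, so $f(x_{n+1})=0$ (equivalently, reinterpreting the integral on $[x_0,x_{n+1})$ leaves the Lebesgue part unchanged). Combining the cancellation with the $O(h_n^\gamma)(\|f\|_\infty+\|f''\|_\infty)$ estimates on everything else delivers the claim.
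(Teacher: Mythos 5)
Your decomposition is essentially the paper's own proof: split off the atom at $x_0$, invoke Proposition \ref{prop:convergence_speed_measure_density} for $M_n(x_0)-M(x_0)$ and for $m_n-m$, and treat the interior sum as a quadrature rule whose left half-cell deficit $-\tfrac12 f(x_0)m(x_0)\delta^+x_0=-f(x_0)\delta^+x_0/\sigma^2(x_0)$ cancels the atom error exactly when $\alpha M(x_0)=1$, i.e.\ for Scheme 2; the paper compresses this cancellation into the factor $(M(x_0)\alpha-1)$ without comment, and you make it explicit. You are in fact more careful than the printed proof on two points: the Abel/telescoping treatment of $\sum_x f(x)m(x)\tfrac{\mu(x)}{\sigma^2(x)}(\delta^+x-\delta^-x)\delta x$ (needed because Assumption \ref{assumption:bounded_grids} allows non-uniform grids, where termwise bounds only give $O(h_n)$), and the right endpoint, where you correctly observe that the uncancelled residual $-\tfrac12 f(x_{n+1})m(x_{n+1})\delta^+x_n$ forces $f(x_{n+1})=0$ for the $\gamma=2$ rate --- a hypothesis the lemma statement omits but which holds in every application in the paper (e.g.\ $f=\varphi_k^2$ with $\varphi_k(r)=0$). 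Note, however, that your parenthetical fix of ``reinterpreting the integral on $[x_0,x_{n+1})$'' does not help: the right half-cell's Lebesgue contribution is unchanged, so vanishing of $f$ at $x_{n+1}$ is genuinely what is used.

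One step as written is not correct: on a non-uniform grid $x_i$ is generally \emph{not} the midpoint of $[\tfrac{x_{i-1}+x_i}{2},\tfrac{x_i+x_{i+1}}{2}]$; the offset is $\tfrac14(\delta^+x_i-\delta^-x_i)$, so the per-cell error of replacing $\int g$ by $g(x_i)\delta x_i$ contains a first-order piece $\tfrac18 g'(x_i)\bigl((\delta^+x_i)^2-(\delta^-x_i)^2\bigr)$ rather than being $O(h_n^3)\Vert g''\Vert_\infty$ outright. This does not sink the argument: these pieces telescope by exactly the Abel summation you already deploy for the $(m_n-m)$ term (or, equivalently, use the trapezoid-based splitting, as the paper implicitly does, so that only the two boundary half-cells survive at first order), and with that repair your estimate delivers the stated bound with $\gamma=1$ for Scheme 1 and $\gamma=2$ for Scheme 2.
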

	\begin{proof}
		We can show the following by using Proposition \ref{prop:convergence_speed_measure_density} and the trapezoidal rule
		\begin{align*}
		&\sum_{x\in\mathbb{S}_n^-}f\left(x\right)M_n\left(x\right)-\int_{x_0}^{x_{n+1}}f\left(x\right)M\left(dx\right) \\
		&\qquad=f\left(x_0\right)M_n\left(x_0\right)-f\left(x_0\right)M\left(x_0\right)+\sum_{x\in\mathbb{S}_n^\circ}f\left(x\right)m_n\left(x\right)\delta x-\int_{x_0}^{x_{n+1}}f\left(x\right)m\left(x\right)dx \\
		&\qquad=f\left(x_0\right)\frac{\delta^+x_0}{\sigma^2\left(x_0\right)}\left(M\left(x_0\right)\alpha-1\right)+O\left(h_n^2\right)+\sum_{x\in\mathbb{S}_n^\circ}f\left(x\right)m_n\left(x\right)\delta x-\int_{x_0}^{x_{n+1}}f\left(x\right)m\left(x\right)dx \\
		&\qquad\leq C_1\left\vert M\left(x_0\right)\alpha-1\right\vert\left\Vert f\right\Vert_\infty h_n+C_2\left\Vert f^{\prime\prime}\right\Vert_\infty h_n^2 \\
		&\qquad\leq\begin{cases} C_3\left\Vert f\right\Vert_\infty h_n & \qquad\textrm{for}\ \alpha=\mu\left(x_0\right), \\ C_2\left\Vert f^{\prime\prime}\right\Vert_\infty h_n^2 &\qquad\textrm{for}\ \alpha=\rho\ (\textrm{as}\ M\left(x_0\right)\alpha=\frac{1}{\rho}\times\rho=1), \end{cases} \\
		&\qquad\leq C_4\max\{\left\Vert f\right\Vert_\infty,\left\Vert f^{\prime\prime}\right\Vert_\infty\}h_n^\gamma,
		\end{align*}
		where $C_1,\ldots,C_4>0$ are independent of $n$ and $f$.
	\end{proof}
	
	\begin{corollary}
		For $h_n\in(0,\delta)$ there holds for every $1\leq k\leq h_n^{-1/4}$, the following lower bound
		\begin{equation}
		\lambda_k^n\geq Ck^2,
		\end{equation}
		if $\delta$ is sufficiently small and $C>0$ is a constant independent of $k$ and $n$.
	\end{corollary}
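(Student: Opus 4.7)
The plan is to combine the sharp asymptotics for the true eigenvalues with the eigenvalue approximation bound from Proposition \ref{prop:li_or_paper_prop2} and then absorb the error into a fraction of the leading $k^2$ term by using that $k$ is constrained to be at most $h_n^{-1/4}$.

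More specifically, I would first invoke Proposition \ref{prop:SLeigen} to get a constant $C_1>0$ with $\lambda_k \geq C_1 k^2$ for all $k\geq 1$. Next, Proposition \ref{prop:li_or_paper_prop2} supplies a constant $C_2>0$, independent of $k$ and $n$, such that for every $k\leq h_n^{-1/4}$,
\begin{equation*}
\lambda_k^n \geq \lambda_k - C_2 k^4 h_n^{\gamma} \geq \lambda_k - C_2 k^4 h_n,
\end{equation*}
where in the last step I use $h_n<1$ and $\gamma\geq 1$. The key observation is that the restriction $k\leq h_n^{-1/4}$ gives $k^2 h_n \leq h_n^{1/2}$, and hence
\begin{equation*}
k^4 h_n = k^2\cdot(k^2 h_n) \leq k^2 h_n^{1/2}.
\end{equation*}
Substituting this into the previous display and combining with the lower bound $\lambda_k\geq C_1 k^2$ yields
\begin{equation*}
\lambda_k^n \geq C_1 k^2 - C_2 k^2 h_n^{1/2} = (C_1 - C_2 h_n^{1/2})\, k^2.
\end{equation*}

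Finally, by choosing $\delta$ sufficiently small so that $C_1 - C_2 \delta^{1/2} \geq C_1/2>0$, one obtains $\lambda_k^n\geq (C_1/2) k^2$ for every $h_n\in(0,\delta)$ and every $1\leq k\leq h_n^{-1/4}$, which is the desired bound with $C=C_1/2$. There is no real obstacle here since the two main ingredients (the $k^2$ lower bound on the true spectrum, and the eigenvalue approximation estimate) are already in hand; the only subtlety is that the naive error bound $C_2 k^4 h_n$ is not a priori $o(k^2)$, and it is exactly the restriction $k\leq h_n^{-1/4}$ that converts this error into a small multiple of $k^2$, allowing it to be absorbed into the leading term.
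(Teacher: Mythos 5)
Your argument is correct and is essentially the paper's own route: the paper simply defers to the proof of Corollary 3.7 in its cited reference, which likewise combines the eigenvalue error estimate of Proposition \ref{prop:li_or_paper_prop2} with the quadratic growth $\lambda_k\geq C_1k^2$ of the exact eigenvalues (Proposition \ref{prop:SLeigen}) and uses the restriction $k\leq h_n^{-1/4}$ to absorb the $Ck^4h_n^{\gamma}$ error into a fraction of the leading $k^2$ term. Your absorption step $k^4h_n\leq k^2h_n^{1/2}$ and the choice of $\delta$ so that $C_1-C_2\delta^{1/2}\geq C_1/2$ are exactly the intended mechanism, so nothing is missing.
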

	
	\begin{proof}
		The proof is the same as the proof of Corollary 3.7 in \cite{li2018} by using Proposition \ref{prop:li_or_paper_prop2} and Lemma \ref{lemma:square_k_lower_upper_bounds}.
	\end{proof}
	
	\bigskip
	\noindent\emph{Proof of Proposition \ref{proposition:li_2017_proposition3}}:
	The main arguments in \cite{zhang2019} for the error of eigenfunctions cannot be applied to the problem here, so we use different ideas.
	Define $\psi_k^n(x)=c\varphi_k^n(x)$ with a constant $c$ such that $\nabla^+\psi_k^n(x_n)=\nabla^+\varphi_k(x_n)$.
	Furthermore, let $e_k^n(x)=\psi_k^n(x)-\varphi_k(x)$.
	Then for every $x\in\mathbb{S}_n^\circ$
	\begin{align*}
	G_ne_k^n\left(x\right)&=G_n\left(\psi_k^n\left(x\right)-\varphi_k\left(x\right)\right) \\
	&=G_n\psi_k^n\left(x\right)-\mathcal{G}\varphi_k\left(x\right)+\left(\mathcal{G}-G_n\right)\varphi_k\left(x\right) \\
	&=\lambda_k^n\psi_k^n\left(x\right)-\lambda_k\varphi_k\left(x\right)+\left(\mathcal{G}-G_n\right)\varphi_k\left(x\right) \\
	&=\lambda_k^ne_k^n\left(x\right)+\left(\lambda_k^n-\lambda_k\right)\varphi_k\left(x\right)+\left(\mathcal{G}-G_n\right)\varphi_k\left(x\right).
	\end{align*}
	We multiply both sides with $m_n(x)\delta x$ and sum the terms over $x$ from $y\in\mathbb{S}_n^\circ$ to $x_n$
	\begin{align*}
	&\sum_{y\leq x<x_{n+1}}m_n\left(x\right)\delta x G_ne_k^n\left(x\right) \\
	&\qquad=\sum_{y\leq x<x_{n+1}}\delta^-x\nabla^-\left(\frac{1}{s_n\left(x\right)}\nabla^+e_k^n\left(x\right)\right)+\sum_{y\leq x<x_{n+1}}k\left(x\right)e_k^n\left(x\right)m_n\left(x\right)\delta x \\
	&\qquad=\frac{1}{s_n\left(x_{n+1}\right)}\nabla^+e_k^n\left(x_{n+1}\right)-\frac{1}{s_n\left(y^-\right)}\nabla^+e_k^n\left(y^-\right) \\
	&\qquad\qquad+\sum_{y\leq x<x_{n+1}}k\left(x\right)e_k^n\left(x\right)m_n\left(x\right)\delta x \\
	&\qquad=-\frac{1}{s_n\left(y^-\right)}\nabla^+e_k^n\left(y^-\right)+\sum_{y\leq x<x_{n+1}}k\left(x\right)e_k^n\left(x\right)m_n\left(x\right)\delta x \\
	&\qquad=\lambda_k^n\sum_{y\leq x<x_{n+1}}e_k^n\left(x\right)m_n\left(x\right)\delta x+\left(\lambda_k^n-\lambda_k\right)\sum_{y\leq x<x_{n+1}}\varphi_k\left(x\right)m_n\left(x\right)\delta x \\
	&\qquad\qquad+\sum_{y\leq x<x_{n+1}}m_n\left(x\right)\delta x\left(\mathcal{G}-G_n\right)\varphi_k\left(x\right),
	\end{align*}
	because $\psi_k^n(x_n)=\varphi_k(x_n)$, $\psi_k^n(x_{n+1})=c\varphi_k^n(x_{n+1})=\varphi_k(x_{n+1})=0$ and $\nabla^+e_k^n(x_n)=0$.
	Then
	\begin{align}
	\frac{1}{s_n\left(y^-\right)\delta^+y^-}e_k^n\left(y^-\right)&=\frac{1}{s_n\left(y^-\right)\delta^+y^-}e_k^n\left(y\right)+\sum_{y\leq x<x_{n+1}}\left(\lambda_k^n-k\left(x\right)\right)e_k^n\left(x\right)m_n\left(x\right)\delta x \\
	&\qquad+\left(\lambda_k^n-\lambda_k\right)\sum_{y\leq x<x_{n+1}}\varphi_k\left(x\right)m_n\left(x\right)\delta x \\
	&\qquad+\sum_{y\leq x<x_{n+1}}m_n\left(x\right)\delta x\left(\mathcal{G}-G_n\right)\varphi_k\left(x\right).\label{eq:prop_43_eq_1}
	\end{align}
	Multiplying both sides with $s_n(y^-)\delta^+y^-$ and taking the absolute value, results in
	\begin{flalign*}
	\qquad\left\vert e_k^n\left(y^-\right)\right\vert&\leq\left\vert e_k^n\left(y\right)\right\vert+\left\vert s_n\left(y^-\right)\delta^+y^-\sum_{y\leq x<x_{n+1}}\left(\lambda_k^n-k\left(x\right)\right)e_k^n\left(x\right)m_n\left(x\right)\delta x\right\vert \\
	&\qquad+\left\vert\left(\lambda_k^n-\lambda_k\right)s_n\left(y^-\right)\delta^+y^-\sum_{y\leq x<x_{n+1}}\varphi_k\left(x\right)m_n\left(x\right)\delta x\right\vert \\
	&\qquad+\left\vert s_n\left(y^-\right)\delta^+y^-\sum_{y\leq x<x_{n+1}}m_n\left(x\right)\delta x\left(\mathcal{G}-G_n\right)\varphi_k\left(x\right)\right\vert \\
	&\leq\left\vert e_k^n\left(y\right)\right\vert+C_1k^2h_n\sum_{y\leq x<x_{n+1}}\left\vert e_k^n\left(x\right)\right\vert\delta x
	&& \mathmakebox[1pt][r]{\left(\textrm{See (a)}\right)} \\
	&\qquad+C_2k^4h_n^{\gamma+1}
	&& \mathmakebox[1pt][r]{\left(\textrm{See (b)}\right)} \\
	&\qquad+C_3k^4h_n^{\gamma+1}
	&& \mathmakebox[1pt][r]{\left(\textrm{See (c)}\right)} \\
	&\leq\left\vert e_k^n\left(y\right)\right\vert+C_1k^2h_n\sum_{y\leq x<x_{n+1}}\left\vert e_k^n\left(x\right)\right\vert\delta x+C_4k^4h_n^{\gamma+1} \\
	&\leq C_5k^2h_n\sum_{y\leq x<x_{n+1}}\left\vert e_k^n\left(x\right)\right\vert\delta x+C_4k^4h_n^{\gamma+1},
	&& \mathmakebox[1pt][r]{\left(\textrm{See (d)}\right)} \\
	\end{flalign*}
	where
	\begin{enumerate}[(a)]
		\item holds because $\lambda_k^n\leq Ck^2$ by \eqref{eq:upper_lower_bound_approximate_eigenvalues}, $m_n\left(x\right)\leq C$, $s_n\left(y^-\right)\leq C$, $\delta^+y^-\leq h_n$.
		\item holds because $\left\vert\lambda_k^n-\lambda_k\right\vert\leq Ck^4h_n^\gamma$ by \eqref{eq:or_paper_2018_prop2} and
		\begin{equation*}
		\left\vert\sum_{y\leq x<x_{n+1}}\varphi_k\left(x\right)m_n\left(x\right)\delta x\right\vert\leq Ch_n\sum_{y\leq x<x_{n+1}}\left\vert\varphi_k\left(x\right)\right\vert\leq Ch_nn\left\Vert\varphi_k\right\Vert_\infty\leq C.
		\end{equation*}
		\item holds because because
		\begin{align*}
		&\left\vert\sum_{y\leq x<x_{n+1}}m_n\left(x\right)\delta x\left(\mathcal{G}-G_n\right)\varphi_k\left(x\right)\right\vert\leq\left\Vert\mathcal{G}\varphi_k-G_n\varphi_k\right\Vert_{n,\infty}\sum_{y\leq x<x_{n+1}}m_n\left(x\right)\delta x \\
		&\qquad\leq C\left\Vert\mathcal{G}\varphi_k-G_n\varphi_k\right\Vert_{n,\infty}nh_n\leq Ck^4h_n^\gamma,
		\end{align*}
		by \eqref{eq:proposition_2_li_eq1}.
		\item holds because one can choose $C_1$ large enough, such that $C_1k^2h_n\delta x\geq 1$ and the first term can be put into the sum.
	\end{enumerate}
	The constants $C_1\ldots,C_5>0$ are independent of $k$, $n$ and $x$, $y$.	
	Using the discrete version of Gronwall's inequality and noting that $k\leq h_n^{-1/4}$, there holds
	\begin{align*}
	\left\vert e_k^n\left(y^-\right)\right\vert&\leq\sum_{y\leq x<x_{n+1}}C_4k^4h_n^{\gamma+1}\exp\left(C_5k^2h_n\sum_{y\leq x<x_{n+1}}\delta x\right) \\
	&\leq C_4k^4h_n^{\gamma+1}n\exp\left(C_5\left(r-l\right)\delta^{1/2}\right)\leq C_6k^4h_n^\gamma.
	\end{align*}
	Note that this inequality also holds for $y=x_{n+1}$ because of the choice of $c$ and $\psi_k^n$, such that $e_k^n(x_{n+1})=e_k^n(x_{n})$.
	Furthermore, $C_6>0$ is independent of $k$, $n$ and $x$, $y$.
	
	Then there holds
	\begin{align*}
	\left\Vert\varphi_k^n-\varphi_k\right\Vert_{n,\infty}=\left\Vert\frac{\psi_k^n}{\left\Vert\psi_k^n\right\Vert_{n,2}}-\varphi_k\right\Vert_{n,\infty}&\leq\frac{1}{\left\Vert\psi_k^n\right\Vert_{n,2}}\left\Vert\psi_k^n-\varphi_k\right\Vert_{n,\infty}+\left\vert\frac{1}{\left\Vert\psi_k^n\right\Vert_{n,2}}-1\right\vert\left\Vert\varphi_k\right\Vert_\infty \\
	&\leq\frac{C_6k^4h_n^\gamma}{\left\Vert\psi_k^n\right\Vert_{n,2}}+C_7\left\vert\frac{1}{\left\Vert\psi_k^n\right\Vert_{n,2}}-1\right\vert
	\end{align*}
	and furthermore,
	\begin{flalign*}
	\qquad\left\vert 1-\left\Vert\psi_k^n\right\Vert_{n,2}^2\right\vert&=\left\vert\int_{x_0}^{x_{n+1}}\varphi_k^2\left(x\right)M\left(dx\right)-\sum_{x\in\mathbb{S}_n^-}\psi_k^n\left(x\right)^2M_n\left(x\right)\right\vert \\
	&\leq C_8\left\Vert\varphi_k^{\prime\prime}\right\Vert_\infty h_n^\gamma+\sum_{x\in\mathbb{S}_n^-}\left\vert\varphi_k\left(x\right)^2-\psi_k^n\left(x\right)^2\right\vert M_n\left(x\right)\leq C_{9}k^4h_n^\gamma,
	\end{flalign*}
	by making use of Lemma \ref{lemma:or_2018_lemma7} and the fact that
	\begin{align*}
	&\sum_{x\in\mathbb{S}_n^-}\left\vert\varphi_k\left(x\right)^2-\psi_k^n\left(x\right)^2\right\vert M_n\left(x\right)=\sum_{x\in\mathbb{S}_n^-}\left\vert e_k^n\left(x\right)\right\vert\left\vert\varphi_k\left(x\right)^2-\psi_k^n\left(x\right)^2\right\vert M_n\left(x\right) \\
	&\qquad\leq\sum_{x\in\mathbb{S}_n^-}\left\vert e_k^n\left(x\right)\right\vert^2M_n\left(x\right)+2\sum_{x\in\mathbb{S}_n^-}\left\vert e_k^n\left(x\right)\right\vert\left\vert\varphi_k\left(x\right)\right\vert M_n\left(x\right)\leq C_{10}k^4h_n^\gamma.
	\end{align*}
	Putting this result back into the above equation yields
	\begin{equation*}
	\left\Vert\psi_k^n\right\Vert_{n,2}\geq\sqrt{1-C_{9}k^4h_n^\gamma}\geq\sqrt{1-C_{9}\delta^{\gamma-4/5}},
	\end{equation*}
	as $k\leq h_n^{-1/5}$.
	Collecting all results shows
	\begin{equation}
	\left\Vert\varphi_k^n-\varphi_k\right\Vert_{n,\infty}\leq\frac{C_6k^4h_n^\gamma}{\sqrt{1-C_{9}\delta^{\gamma-4/5}}}+C_7\left\vert\frac{1}{\sqrt{1-C_{9}\delta^{\gamma-4/5}}}-1\right\vert\leq C_{10}k^4h_n^\gamma \label{eq:li_2018_proposition3_conv_eigenfunctions}
	\end{equation}
	and thus the claim is shown.\qed
	
	\bigskip
	\noindent\emph{Proof of Theorem \ref{theorem:convergence_transition_probability}}:
	We first take a more detailed look at the approximation of the transition density $p_n$ in the interior of the state space.
	
	Comparing the eigenfunction expansions of $p_n$ and $p$ shows that for $y\in\mathbb{S}_n^\circ$,
	\begin{align*}
	&\left\vert\frac{1}{m_n\left(y\right)}p_n\left(t,x,y\right)-\frac{1}{m\left(y\right)}p\left(t,x,y\right)\right\vert \\
	&\quad=\left\vert\sum_{k=1}^n\exp\left(-\lambda_k^nt\right)\varphi_k^n\left(x\right)\varphi_k^n\left(y\right)-\sum_{k=1}^\infty\exp\left(-\lambda_kt\right)\varphi_k\left(x\right)\varphi_k\left(y\right)\right\vert \\
	&\quad\leq\sum_{1\leq k\leq h_n^{-1/5}}\exp\left(-\lambda_k^nt\right)\left\Vert\varphi_k^n-\varphi_k\right\Vert_{n,\infty}\left\Vert\varphi_k^n\right\Vert_{n,\infty} +\sum_{1\leq k\leq h_n^{-1/5}}\exp\left(-\lambda_k^nt\right)\left\Vert\varphi_k\right\Vert_\infty\left\Vert\varphi_k^n-\varphi_k\right\Vert_{n,\infty} \\
	&\quad\qquad+\sum_{1\leq k\leq h_n^{-1/5}}\left\vert\exp\left(-\lambda_k^nt\right)-\exp\left(-\lambda_kt\right)\right\vert\left\Vert\varphi_k\right\Vert_\infty\left\Vert\varphi\right\Vert_\infty \\
	&\quad\qquad+\sum_{h_n^{-1/5}<k\leq n}\exp\left(-\lambda_k^nt\right)\left\Vert\varphi_k^n\right\Vert_{n,\infty}\left\Vert\varphi_k^n\right\Vert_{n,\infty}+\sum_{k>h_n^{-1/5}}\exp\left(-\lambda_kt\right)\left\Vert\varphi_k\right\Vert_\infty\left\Vert\varphi_k\right\Vert_\infty \\
	&\quad\leq  C_1h_n^\gamma\sum_{1\leq k\leq h_n^{-1/5}}\exp\left(-C_2k^2t\right)\left(k^5+k^4\right)+C_3\sum_{1\leq k\leq h_n^{-1/5}}\left\vert\exp\left(-\lambda_k^nt\right)-\exp\left(-\lambda_kt\right)\right\vert \\
	&\quad\qquad+C_4\sum_{h_n^{-1/5}<k\leq n}\exp\left(-\lambda_k^n t\right)k^2+C_5\sum_{k>h_n^{-1/5}}\exp\left(-C_6k^2t\right),
	\end{align*}
	for positive constants independent of $k$, $n$ and also $x$, $y$.
	The last three terms will be studied further.
	\begin{align*}
	C_3\sum_{1\leq k\leq h_n^{-1/5}}\left\vert\exp\left(-\lambda_k^nt\right)-\exp\left(-\lambda_kt\right)\right\vert&\leq C_3C_7\sum_{1\leq k\leq h_n^{-1/5}}\exp\left(-C_8k^2t\right)tk^4h_n^\gamma \\
	&\leq C_9\left(t\right)h_n^\gamma,
	\end{align*}
	where $C_9(t)=C_3C_7\sum_{k=1}^\infty \exp\left(-C_8k^2t\right)tk^4<\infty$.
	Furthermore,
	\begin{flalign*}
	C_4\sum_{h_n^{-1/5}<k\leq n}\exp\left(-\lambda_k^n t\right)k^2&\leq C_4n\exp\left(-C_{2}h_n^{-2/5}t\right)n^2 \\
	&\leq C_{10}h_n^2\exp\left(-C_{2}h_n^{-2/5}t\right)n^5 \\
	&\leq C_{11}\left(t\right)h_n^2,
	\end{flalign*}
	where $C_{11}(t)=C_{10}\exp(-C_{2}h_n^{-2/5}t)n^5<\infty$.
	Lastly,
	\begin{equation*}
	C_5\sum_{k>h_n^{-1/5}}\exp\left(-C_6k^2t\right)\leq C_{12}h_n^2\sum_{k>h_n^{-1/5}}\exp\left(-C_7k^2t\right)k^{12}\leq C_{13}\left(t\right)h_n^2.
	\end{equation*}
	Summarizing the above results yields
	\begin{equation*}
	\left\vert\frac{1}{m_n\left(y\right)}p_n\left(t,x,y\right)-\frac{1}{m\left(y\right)}p\left(t,x,y\right)\right\vert\leq C_{14}\left(t\right)h_n^\gamma+C_{15}\left(t\right)h_n^2\leq C_th_n^\gamma,
	\end{equation*}
	for some constant $C_t>0$ depending only on $t$.
	Finally, by using the difference between $m_n(x)$ and $m(x)$, derived in \eqref{eq:convergence_speed_density_interior}, there follows
	\begin{align*}
	p_n\left(t,x,y\right)-p\left(t,x,y\right)&=p\left(t,x,y\right)\left(\frac{m_n\left(y\right)}{m\left(y\right)}-1\right)+C_th_n^\gamma \\
	&=p\left(t,x,y\right)\frac{\mu\left(y\right)}{\sigma^2\left(y\right)}\left(\delta^+y-\delta^-y\right)+C_th_n^\gamma.
	\end{align*}
	It can also be shown using the same steps as above that
	\begin{equation*}
	\left\vert\frac{1}{M_n\left(y\right)}P_n\left(t,x,y\right)-\frac{1}{M\left(y\right)}P\left(t,x,y\right)\right\vert\leq C_th_n^\gamma
	\end{equation*}
	and by using \eqref{eq:convergence_speed_measure_x0} there holds
	\begin{align*}
	P_n\left(t,x,x_0\right)-P\left(t,x,x_0\right)&=P\left(t,x,x_0\right)\left(\frac{M_n\left(x_0\right)}{M\left(x_0\right)}-1\right)+C_th_n^\gamma \\
	&=P\left(t,x,x_0\right)\frac{\alpha}{\sigma^2\left(x_0\right)}\delta x_0+C_th_n^\gamma.
	\end{align*}
	This concludes the proof.\qed
	
	\bigskip
	\noindent\emph{Proof of Theorem \ref{theorem:discretization_error_convergence}}:
	For $x\in\mathbb{S}_n^-$ we have the following decomposition of the value function
	\begin{flalign}
	&u_n\left(t,x\right)-u\left(t,x\right)=\sum_{y\in\mathbb{S}_n^-}P_n\left(t,x,y\right)f\left(y\right)-\int_{x_0}^{x_{n+1}}P\left(t,x,dy\right)f\left(y\right) \nonumber\\
	&\qquad=P\left(t,x,x_0\right)f\left(x_0\right)\frac{\alpha}{\sigma^2\left(x_0\right)}\delta^+ x_0+C_th_n^\gamma f\left(x_0\right)\nonumber \\
	&\qquad\qquad+\sum_{y\in\mathbb{S}_n^\circ}\left(p_n\left(t,x,y\right)-p\left(t,x,y\right)\right)f\left(y\right)\delta y+\sum_{y\in\mathbb{S}_n^\circ}p\left(t,x,y\right)f\left(y\right)\delta y \nonumber \\
	&\qquad\qquad+\frac{1}{2}p\left(t,x,x_0\right)f\left(x_0\right)\delta^+x_0-\frac{1}{2}p\left(t,x,x_0\right)f\left(x_0\right)\delta^+x_0-\int_{x_0}^{x_{n+1}}p\left(t,x,y\right)f\left(y\right)dy \nonumber\\
	&\qquad=f\left(x_0\right)\frac{\alpha}{\sigma^2\left(x_0\right)}\delta^+x_0M\left(x_0\right)\sum_{k=1}^\infty\exp\left(-\lambda_kt\right)\varphi_k\left(x\right)\varphi_k\left(x_0\right) \nonumber \\
	&\qquad\qquad-f\left(x_0\right)\frac{1}{\sigma^2\left(x_0\right)}\delta^+x_0\sum_{k=1}^\infty\exp\left(-\lambda_kt\right)\varphi_k\left(x\right)\varphi_k\left(x_0\right)+O\left(h_n^\gamma\right)
	&&\mathmakebox[1pt][r]{\left(\textrm{See (a)}\right)} \nonumber \\
	&\qquad\qquad+\sum_{y\in\mathbb{S}_n^\circ}p\left(t,x,y\right)\frac{\mu\left(y\right)}{\sigma^2\left(y\right)}\left(\delta^+y-\delta^-y\right)f\left(y\right)\delta y+O\left(h_n^2\right)
	&&\mathmakebox[1pt][r]{\left(\textrm{See (b)}\right)} \nonumber \\
	&\qquad\qquad+\frac{1}{2}\sum_{y\in\mathbb{S}_n^-}\left(p\left(t,x,y\right)f\left(y\right)+p\left(t,x,y^+\right)f\left(y^+\right)\right)\delta ^+y-\int_{x_0}^{x_{n+1}}p\left(t,x,y\right)f\left(y\right)dy \nonumber \\
	&\qquad=\left(\alpha M\left(x_0\right)-1\right)O\left(h_n\right)+O\left(h_n^\gamma\right)+O\left(h_n^2\right) \label{eq:proof_theorem6_eq0} \\
	&\qquad\qquad+\sum_{y\in\mathbb{S}_n^\circ}p\left(t,x,y\right)\frac{\mu\left(y\right)}{\sigma^2\left(y\right)}\left(\delta^+y-\delta^-y\right)f\left(y\right)\delta y \label{eq:proof_theorem6_eq1} \\
	&\qquad\qquad+\frac{1}{2}\sum_{y\in\mathbb{S}_n^-}\left(p\left(t,x,y\right)f\left(y\right)+p\left(t,x,y^+\right)f\left(y^+\right)\right)\delta^+y-\int_{x_0}^{x_{n+1}}p\left(t,x,y\right)f\left(y\right)dy,\label{eq:proof_theorem6_eq2}
	\end{flalign}
	where
	\begin{enumerate}[(a)]
		\item holds because $p\left(t,x,x_0\right)=\lim_{y\searrow x_0}\ p\left(t,x,y\right)=\frac{2}{\sigma^2\left(x_0\right)}\sum_{k=1}^\infty\exp\left(-\lambda_kt\right)\varphi_k\left(x\right)\varphi_k\left(x_0\right)$.
		\item holds because of Theorem \ref{theorem:convergence_transition_probability}.
		\item Eq. \eqref{eq:proof_theorem6_eq0} holds because
		\begin{equation}
		\frac{1}{\sigma^2\left(x_0\right)}\sum_{k=1}^\infty\exp\left(-\lambda_kt\right)\varphi_k\left(x\right)\varphi_k\left(x_0\right)\leq C_1\sum_{k=1}^\infty\exp\left(-C_2k^2\right)\leq C_3.
		\end{equation}
	\end{enumerate}
	It can be seen that the approximation error for the value function consists of 3 parts.
	The first error is due to the boundary behavior and depends on the scheme.
	The second error results from the discretization error of the transition kernel and the last error is the discretization error of the integral.
	Here we denoted by $O(h_n^\gamma)$ a term that is bounded by $C_th_n^\gamma$ where the constant $C_t$ is independent of $n$, $x$ and $y$ but might depend on $t$ and $f$.
	
	The first term in \eqref{eq:proof_theorem6_eq0} is of order $O(h_n)$ for Scheme 1 and vanished for Scheme 2 as $\alpha M(x_0)=\rho\times 1/\rho=1$.
	The term in \eqref{eq:proof_theorem6_eq1} is of order $O(h_n^2)$ as can be seen in Eq. (23) in \cite{zhang2019}.
	The term in \eqref{eq:proof_theorem6_eq2} is generally of order $O(h_n)$ as seen in Eq. (25) in this reference.
	For call/put type payoffs, the term is of order $O(h_n^2)$.
	
	One can also summarize the above cases by using Eq. (25) and the proof of Theorem 1 in \cite{zhang2019}, such that
	\begin{align}
	&\left\vert u_n\left(t,x\right)-u\left(t,x\right)\right\vert\leq p\left(t,x,\xi\right)\left\vert f\left(\xi-\right)-f\left(\xi+\right)\right\vert\ \left\vert\frac{\xi^-+\xi^+}{2}-\xi\right\vert+C_th_n^\gamma, \\
	&\left\vert u_n\left(t,x\right)-u\left(t,x\right)\right\vert\geq p\left(t,x,\xi\right)\left\vert f\left(\xi-\right)-f\left(\xi+\right)\right\vert\ \left\vert\frac{\xi^-+\xi^+}{2}-\xi\right\vert-D_th_n^\gamma, \nonumber
	\end{align}
	with positive constants $C_t,D_t>0$ independent of $n$ and $x$.
	Taking the maximum on both sides then shows the claim of the theorem.\qed
	%%%%%%%%%%%%%%%%%%%%%%%%%%%%%%%%%%%%%%%%%%%%%%%%%%%%%%%%%%%%%%%%%%%%%%%%%%%%%%%%%%%%%%%%%%%%%%%%%%%%%%%
	
	%\newpage
	\bibliographystyle{apalike}
	\bibliography{references}
	
\end{document}